\documentclass[a4paper,11pt]{article}

\usepackage[utf8]{inputenc} 
\usepackage{amsfonts}
\usepackage{amssymb}
\usepackage{amsthm}
\usepackage{amsmath}
\usepackage{a4wide}
\usepackage{mathrsfs}
\usepackage{epsfig}
\usepackage{comment}
\usepackage{palatino}
\usepackage{tikz}
\usepackage{fp,ifthen}
\usepackage{esint}
\usepackage{nicefrac}
\usetikzlibrary{decorations.pathreplacing}
\usepackage{float}
\usepackage{dsfont}
\usepackage{enumerate}

\usepackage[a4paper,left=30mm,right=30mm,top=30mm,bottom=30mm,marginpar=20mm]{geometry} 

 \usepackage{lipsum}

\usepackage[colorinlistoftodos,textwidth=2.3cm]{todonotes} 

\newcommand{\pdfgraphics}{\ifpdf\DeclareGraphicsExtensions{.pdf,.jpg}\else\fi}
\usepackage{graphicx}

\usepackage{color}
\definecolor{hanblue}{rgb}{0.27, 0.42, 0.81}
\definecolor{red}{rgb}{1.0, 0.0, 0.0}
\usepackage[colorlinks, citecolor=blue,linkcolor=blue, urlcolor = blue]{hyperref}
\usepackage[english,capitalize]{cleveref}

\usepackage{mathtools}

\theoremstyle{plain}
\newtheorem{theorem}{Theorem}[section]
\newtheorem{conjecture}[theorem]{Conjecture}
\newtheorem{lemma}[theorem]{Lemma}
\newtheorem{proposition}[theorem]{Proposition}
\newtheorem{corollary}[theorem]{Corollary}
\newtheorem{ackn}{Acknowledgements\!}

\theoremstyle{definition}
\newtheorem{definition}[theorem]{Definition}

\theoremstyle{remark}
\newtheorem{remark}[theorem]{Remark}

\numberwithin{equation}{section}

\newcommand{\de}{\ensuremath{\,\mathrm d}} 

\newcommand \eps{\ensuremath{\varepsilon}} 
\renewcommand{\epsilon}{\varepsilon}
\newcommand{\N}{\ensuremath{\mathbb N}}
\newcommand{\R}{\ensuremath{\mathbb R}}
\newcommand{\C}{\ensuremath{\mathbb C}}

\newcommand{\mres}{\mathrel{\tikz[baseline=0ex]{\draw[line width=.5pt,line cap=round] (0ex,1ex) -- (0ex,0ex) -- (1ex,0ex);}}}

\newcommand{\toweakstar}{\overset{*}\rightharpoonup}
\newcommand{\sbullet}{\begin{picture}(1,1)(-0.5,-2.5)\circle*{2}\end{picture}}
\newcommand{\frarg}{\,\sbullet\,}
\newcommand{\ONE}{\mathds{1}}

\begin{document}

\pdfgraphics 

\title{Concentration phenomena and the\\ Vanishing Mass Conjecture}

\author{Luca Gennaioli \footnote{Mathematics Institute, University of Warwick, Coventry CV4 7AL, UK, \href{L.Gennaioli@warwick.ac.uk}{L.Gennaioli@warwick.ac.uk}.} \and Filip Rindler \footnote{Mathematics Institute, University of Warwick, Coventry CV4 7AL, UK, \href{F.Rindler@warwick.ac.uk}{F.Rindler@warwick.ac.uk}.}}

\date{\today}

\maketitle

\vspace{-0.5cm}
\begin{abstract}
\noindent 
Concentrating (that is, non-equi-integrable) sequences of functions satisfying linear PDE constraints arise in a wide variety of problems in PDE, the calculus of variations, and geometric measure theory. In 2001, Bouchitt\'{e} conjectured that such concentrations can always be represented as superpositions of ``simple'' concentrations, a claim he termed the Vanishing Mass Conjecture. We fully resolve this conjecture for all first-order constant-coefficient linear differential operators, proving that the value-distribution (Young) measure of any such sequence admits a Choquet-type decomposition into probability measures whose barycenters lie in the associated Tartar wave cone. In fact, we prove a significantly stronger statement than originally conjectured, namely that the constituent measures are themselves generated by concentrating sequences satisfying the same PDE constraint. This provides a complete structural description of concentrating sequences and yields several applications, including a new proof of a theorem of De~Philippis and the second author on the singular polar of PDE-constrained measures, two types of compensated compactness results, and a surprising result on the support cardinality of extremal concentration Young measures. We also place our results in the context of Morrey's Conjecture, showing that its analogue for pure concentrations is false. Our proofs introduce several new techniques, most notably convexity arguments involving ``barrier functions'' and a careful smoothing procedure via the heat flow that replaces classical Fourier methods.
\end{abstract}

\section{Introduction}

The Vanishing Mass Conjecture, which was introduced in a highly influential paper by Guy Bouchitt\'{e}~\cite{B03} -- for the divergence only and in an ${\rm L}^2$-setting with different scaling\footnote{We learned of the ${\rm L}^1$-version investigated here from Giovanni Alberti, who attributed this version, too, to Bouchitt\'{e}.} -- concerns the precise structure of the admissible asymptotic concentrations in sequences of ${\rm L}^1$-bounded, but not equi-integrable (hence ``concentrating''), functions satisfying a linear PDE constraint. One possible way to state the original conjecture is the following:

\begin{conjecture}
\label{conj:vmc}
Let $(u_j)_j \subseteq {\rm C}^\infty(\Omega;\R^{d\times d})$ be a sequence of (row-wise) divergence-free vector fields such that $u_j \to 0$ in measure and $\|u_j\|_{{\rm L}^1}=1$ for all $j$. Let $\mu$ be a \emph{concentration Young measure} associated to the sequence $(u_j)_j$, that is, any weak* limit point of the sequence
\begin{equation*}
  \mu_j := \bigg(\frac{u_j}{|u_j|}\bigg)_\sharp(|u_j| \, \mathscr L^d\mres\Omega)
  := (|u_j| \, \mathscr L^d\mres\Omega) \circ \bigg(x \mapsto \frac{u_j(x)}{|u_j(x)|}\bigg)^{-1}.
\end{equation*}
Then, there exists a probability measure $\nu_0$ concentrated on the set
\begin{equation} \label{eq:Lambda_div}
\Lambda_{\mathrm{div}}:= \bigl\{ A\in\R^{d\times d} \;:\; {\rm det}\, A=0 \bigr\}
\end{equation}
and a family $(\mu_\xi)_{\xi \in \Lambda_{\mathrm{div}}}$ of probability measures that are supported on the unit sphere in $\mathbb{R}^{d\times d}$ (and weakly*-measurable with respect to $\nu_0$) such that $\mu$ can be written in the form
\[
  \mu=\int\mu_\xi\de\nu_0(\xi)
\]
and for the \emph{barycenters} $[\mu_\xi]$ of the $\mu_\xi$'s it holds that
\[
  [\mu_\xi] := \int \lambda \de \mu_\xi(\lambda) =\xi \qquad\text{for $\nu_0$-a.e.\ $\xi \in \Lambda_{\mathrm{div}}$.}
\]
\end{conjecture}

This statement was originally conceived as a strategy to attack the Optimal Light Structures Conjecture in the field of shape optimization, which concerns the strongest-possible ``light'' structure, as measured by minimal elastic compliance in the limit of the material mass tending to zero. This conjecture goes back to Michell's seminal 1904 work~\cite{M04} on optimal truss design and has attracted a lot of attention since~\cite{KS86_1,KS86_2,KS86_3,AK93_1,AK93_2,AK93_3,A02,O17,O22,BIR23,AIR26}.

Despite its applied origins, the Vanishing Mass Conjecture -- naturally extended to general linear PDE constraints -- has come to be viewed as a fundamental structural principle governing concentrating sequences, as illustrated by the broad range of applications presented below. Some examples of concentrations, together with their decompositions as above, can be found in the Appendix.

Moreover, the Vanishing Mass Conjecture may be considered a far-reaching generalization of the main theorem of~\cite{DPR16}, which for a first-order linear differential operator
\begin{equation*}
   \mathcal A := \sum_{k=1}^d A_k \partial_k
\end{equation*}
with constant coefficients $A_k\in{\rm Lin}(V,W)$, where $V$ and $W$ are finite-dimensional real vector spaces, reads as follows:

\begin{theorem}
\label{thm:singular_density}
Let $\mathcal{A}$ be a constant-coefficient linear differential operator and let $\mu\in\mathcal{M}(\Omega;V)$ be a finite Radon measure, where $\Omega\subseteq\R^d$ is an open set, with Lebesgue--Radon--Nikodym decomposition
\[
  \mu = \mu^a + \mu^s = \frac{\de\mu}{\de|\mu|} \, \bigl( |\mu|^a + |\mu|^s \bigr).
\]
Suppose that
\begin{equation*}
    \mathcal{A}\mu = 0  \qquad\text{or}\qquad
    \mathcal{A}\mu = \sigma \in\mathcal{M}(\Omega;W)
\end{equation*}
distributionally. Then,
\begin{equation}
\label{eq:sing_density}
   \frac{\de\mu}{\de|\mu|}(x)\in\Lambda_{\mathcal{A}}\quad\text{for $|\mu|^s$-a.e.\ $x$,}
\end{equation}
where
\[
   \Lambda_{\mathcal A} := \bigcup_{|\xi|=1} \ker\mathbb A(\xi),
    \qquad
    \mathbb A(\xi) := \sum_{k=1}^d A_k \xi_k,
    \quad
    \xi \in \R^d,
\]
is the \emph{wave cone} associated to $\mathcal{A}$.
\end{theorem}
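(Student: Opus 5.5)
We follow the blow-up strategy of De~Philippis and the second author, specialised to first-order operators. Argue by contradiction. Suppose there is a Borel set $E$ with $|\mu|^s(E)>0$ and $\frac{\de\mu}{\de|\mu|}(x)\notin\Lambda_{\mathcal A}$ for $x\in E$. Fix a point $x_0\in E$ that has three properties: it is a Lebesgue point of the polar $\frac{\de\mu}{\de|\mu|}$ with respect to $|\mu|^s$, it satisfies $\frac{\de|\mu|^a}{\de|\mu|}(x_0)=0$, and it satisfies $|\sigma|(B_r(x_0))/|\mu|(B_r(x_0))\to0$ when $\mathcal A\mu=\sigma$. The last condition holds $|\mu|^s$-a.e. after discarding the part of $|\mu|^s$ on which $|\sigma|$ is not negligible. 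That reduction needs care; alternatively, for first order we can exploit the scaling $r\,|\sigma|(B_r)/|\mu|(B_r)\to0$, which holds $|\mu|^s$-a.e. by a Besicovitch differentiation argument, since $|\mu|^s$-a.e. $|\mu|(B_r)/r^d\to\infty$ while the ratio of $\sigma$ to $\mu$ is only finite.

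Consider the rescalings $\mu_r:=(T^{x_0,r})_\sharp\mu/|\mu|(B_r(x_0))$. By Preiss' tangent measure theory, some subsequence converges weakly* to a nonzero tangent measure $\nu=P_0\,|\nu|$, where $P_0:=\frac{\de\mu}{\de|\mu|}(x_0)\notin\Lambda_{\mathcal A}$. The constraint transforms as $\mathcal A\mu_r = r\,(\text{rescaled }\sigma)/|\mu|(B_r)$, which tends to $0$ because the operator is first order. Hence $\mathcal A(P_0|\nu|)=0$, that is $\sum_k A_kP_0\,\partial_k|\nu|=0$. In Fourier variables this reads $\mathbb A(\xi)P_0\,\widehat{|\nu|}(\xi)=0$. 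Since $P_0\notin\Lambda_{\mathcal A}$, we have $\mathbb A(\xi)P_0\neq0$ for all $\xi\neq0$, so $\widehat{|\nu|}$ is supported at the origin. Thus $|\nu|$ is a polynomial, and being a nonnegative locally finite tangent measure it is a constant multiple of $\mathscr L^d$. More robustly, without Fourier analysis, the operator $\psi\mapsto\sum_kA_kP_0\partial_k\psi$ on scalar functions is elliptic. Therefore $|\nu|$ is smooth and the operator's kernel structure forces $|\nu|=c\mathscr L^d$ on bounded sets.

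This alone gives no contradiction, because absolutely continuous tangents do occur at singular points. The key step is therefore a quantitative upgrade. Using the ellipticity of $\mathcal A(P_0\,\cdot)$, write $\mu_r = P_0|\mu_r| + (\tfrac{\de\mu_r}{\de|\mu_r|}-P_0)|\mu_r|$, where the second term is small in total variation by the Lebesgue-point property. Apply a parametrix or elliptic regularity estimate for the scalar operator $\mathbb A(\nabla)P_0$. Then $|\mu_r|$ decomposes on $B_{1/2}$ into an $\mathrm L^1$-close-to-smooth part plus a remainder whose total variation is $o(1)$, with the smooth part in a compactly embedded space such as ${\rm L}^{p}$, $p>1$. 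Since singular measures are mutually singular with Lebesgue measure, $(\mu_r)^s$ and $|\mu_r|$ essentially coincide at good blow-up points, and restricting the estimate to $|\mu|^s$ yields $|\mu_r^s|(B_{1/2})\to0$. This contradicts $|\mu_r|(B_{1/2})\gtrsim 1$, which we may arrange by choosing doubling radii.

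We expect the main obstacle to be this last step. The elliptic estimate for $\mathbb A(\nabla)P_0$ only controls $|\mu_r|$ modulo a remainder of size $\|\mathcal A\text{-error}\|$ in a negative Sobolev norm, measured against a total-variation perturbation. The Calder\'on--Zygmund operator involved is not bounded on ${\rm L}^1$, so we will first try to use its weak-$(1,1)$ bound together with the compact embedding ${\rm L}^1\hookrightarrow W^{-1,q}$ for $q<d/(d-1)$. The aim is to show that the elliptic part carries no singular mass in the limit.
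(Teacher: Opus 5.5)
Your preliminary reductions are fine. One caveat there: the claim $|\sigma|(B_r(x_0))/|\mu|(B_r(x_0))\to0$ is not available in general. Your alternative with the factor $r$ is correct, because by Besicovitch differentiation that ratio has a finite limit for $|\mu|$-a.e.\ $x_0$. You also rightly note that an absolutely continuous tangent measure gives no contradiction.

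\textbf{The gap.} The decisive Step~3 is not carried out, and the decomposition you assert there is not what the elliptic estimate gives. Write $\mu_r=P_0|\mu_r|+R_r$ with $|R_r|(B_1)\to0$, and invert the elliptic symbol $\xi\mapsto\mathbb A(\xi)P_0$ after a cutoff $\varphi$. This yields $\varphi|\mu_r|=T_{-1}[\mathbb A(\nabla\varphi)\mu_r+\varphi\mathcal{A}\mu_r]-T_0[\varphi R_r]$.
\begin{itemize}
\item $T_{-1}$ has a $(-1)$-homogeneous symbol. This term is indeed an ${\rm L}^1_{\rm loc}$ function, precompact in $r$.
\item $T_0$ has a $0$-homogeneous symbol, so $T_0=c\,{\rm Id}+K*$ with a Calder\'on--Zygmund kernel $K$.
\end{itemize}
Only $c\,\varphi R_r$ is small in total variation. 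The term $K*(\varphi R_r)$ is small merely in ${\rm L}^{1,\infty}$, and for a measure $R_r$ it is a priori only a distribution.

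The left-hand side is essentially singular. So, up to the small term $c\,\varphi R_r$, all the singular mass of $\varphi|\mu_r|$ must be carried by the distribution $K*(\varphi R_r)$. Neither the weak-$(1,1)$ bound (which concerns only the a.e.\ principal-value function) nor the compact embedding into $W^{-1,q}$ says anything about the singular part of such a term. Excluding it at the level of measures would require an F.\ and M.\ Riesz-type theorem. Hence ``restricting the estimate to $|\mu|^s$'' is unjustified, and your proposal stops exactly at the crux.

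\textbf{The missing idea.} Encode singularity as convergence to zero in measure of a mollified sequence, so that everything becomes a pointwise identity between functions.
\begin{enumerate}
\item Since $|\mu_r|^a(B_1)\to0$ and $|\mu_r|^s\perp\mathscr L^d$, you can choose $\eps_r\searrow0$ so that $w_r:=\rho_{\eps_r}*|\mu_r|\to0$ in measure, while still $w_r\toweakstar|\nu|$.
\item Repeating the inversion for the smooth functions $\rho_{\eps_r}*\mu_r$ gives $\varphi w_r=f_r+g_r$. Here $(f_r)$ is precompact in ${\rm L}^1_{\rm loc}$ and $\|g_r\|_{{\rm L}^{1,\infty}}\lesssim\|\varphi\,\rho_{\eps_r}*R_r\|_{{\rm L}^1}\to0$.
\item Hence $g_r\to0$ in measure. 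Then $f_r=\varphi w_r-g_r\to0$ in measure, so $f_r\to0$ in ${\rm L}^1_{\rm loc}$.
\item Also $g_r\to0$ in $\mathcal D'$, since $T_0$ maps ${\rm L}^1$ continuously into distributions.
\item Therefore $\varphi|\nu|=0$, contradicting $|\nu|(B_{1/2})>0$.
\end{enumerate}
This is the De~Philippis--Rindler argument.

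\textbf{The paper's route.} The paper uses the same mollified blow-up but bypasses Fourier multipliers entirely. The normalized sequence $\rho_{\eps_j}*\mu_{r_j}$ is admissible in Definition~\ref{def:pure_concentrations} and converges strictly to $P_0|\nu|$. By Reshetnyak continuity it therefore generates $\delta_{P_0}$. Theorem~\ref{thm:vanishing_mass} then writes $\delta_{P_0}$ as a superposition of measures with barycenters in $\Lambda_{\mathcal A}$, which forces $P_0\in\Lambda_{\mathcal A}$. For $\mathcal A\mu=\sigma$ one uses the variant with $\mathcal A u_j\to0$ from the remark after Definition~\ref{def:pure_concentrations}, which your first-order scaling provides.
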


The wave cone was introduced by Tartar in the theory of compensated compactness (see, e.g., Section~8.2 and the Notes to Chapter~8 in~\cite{R26}) and it contains all the vectors $v \in V$ such that $\mathcal{A}v$ is \emph{not} elliptic. For $\mathcal{A} = \mathrm{div}$ it is given in~\eqref{eq:Lambda_div}. For $\mathcal{A} = \rm{curl}$, Theorem~\ref{thm:singular_density} is precisely Alberti's celebrated Rank-One Theorem~\cite{A93,DL08,MV19}, which constrains the allowed singularities in the derivatives of BV-functions, as discussed, for instance, in Section~13.5 of~\cite{R26}.

Theorem~\ref{thm:singular_density} is limited to a statement concerning the singular part of the measure $\mu$. It says nothing about the absolutely continuous part of $\mu$ and, in fact, nothing can be said about it in general. For instance, the beautiful and simple result in~\cite[Theorem~3]{A91} (see also \cite{DMG25}) entails that \emph{any} integrable vector field is the absolutely continuous part of the derivative of some BV-function, so that there cannot be any constraints on it.

However, even if a concentrating sequence converges to a (Lebesgue) absolutely continuous measure, the Vanishing Mass Conjecture makes a non-trivial assertion, namely that the value distribution (Young) measure $\mu$ is in fact a superposition of \emph{special} probability measures (represented by the $\mu_\xi$), namely those with barycenter in the wave cone. In this way, the shape of the wave cone $\Lambda_\mathcal{A}$ is reflected in the structure of the asymptotic concentrations. This understanding of the Vanishing Mass Conjecture is reinforced by the fact that Theorem~\ref{thm:singular_density}, and with it all its applications, is in fact a straightforward corollary of the claim of the Vanishing Mass Conjecture (see Theorem~\ref{thm:giancarlo} below).

We remark that in the recent paper~\cite{BW25} the authors were able to slightly improve Theorem~\ref{thm:singular_density}. Indeed, even if not explicitly stated there, they proved that if $(u_j)_j$ is an $\mathcal{A}$-free sequence on a Lipschitz domain $\Omega$ converging to zero in measure and converging \emph{strictly} to some limit $u$, that is $u_j \toweakstar u$ and $\|u_j\|_{{\rm L}^1}\to\|u\|_{\rm TV}$ as $j\to\infty$ (where $\|\frarg\|_{\rm TV}$ denotes the total variation norm), then the corresponding concentration Young measure $\mu$ is entirely supported in the wave cone $\Lambda_{\mathcal{A}}$ (see their Theorem~1.9). However, the general picture, without strict convergence, is necessarily much more complex, cf.~Example~3 in the Appendix. Other related results were proved in~\cite{ARDPHRS24,ARA26}.

Our main theorems will assert the claim of Conjecture~\ref{conj:vmc} in full generality, for first-order constant-coefficient linear differential operators $\mathcal{A}$. To state them precisely, we let $Q:=(0,1)^d\subseteq\R^d$ be the open unit cube in $\R^d$ and we introduce the following definitions: For a function $X\in {\rm L}^1(Q;V)$, its \emph{distribution of directions} will be the measure
\begin{equation} \label{eq:Theta}
    \Theta_X:=\bigg(\frac{X}{|X|}\bigg)_\sharp(|X| \, \mathscr L^d \mres Q)\in\mathcal{M}(V),
\end{equation}
where in $|\frarg|$ we dropped the subscript $V$ and $X/|X|$ is defined arbitrarily on $\{|X|=0\}$. If $V$ is a space of vectors or matrices, then $|\frarg|$ always denotes the Euclidean or Frobenius norm, respectively. We also set $\mathbb S_V:=\{v\in V:\,|v|=1\}$.

\begin{definition}
\label{def:pure_concentrations}
    We say that a probability measure $\mu\in\mathcal{M}^1(\mathbb S_V)$ belongs to the set $\mathcal{Y}_\mathcal{A}(Q)$ of \emph{concentration Young measures}, or simply $\mathcal{Y}_\mathcal{A}$, if there exists a sequence $(u_j)_j \subseteq {\rm C}^\infty(Q;V)$ and a measure $u\in\mathcal{M}(\overline{Q};V)$ such that 
    \begin{enumerate}[(i)]
        \item $\mathcal{A}u_j=0$ in $Q$ (distributionally) for all $j$;
        \item $\|u_j\|_{{\rm L}^1(Q;V)}=1$ for all $j$;
        \item $u_j$ converges to zero in measure as $j \to \infty$, that is, $|\{|u_j| > \delta\}| \to 0$ for all $\delta > 0$;
        \item $u_j\toweakstar u$ in $\mathcal{M}(\overline{Q};V)$ as $j \to \infty$ (weak* convergence), with $|u|(\partial Q)=0$;
        \item $\Theta_{u_j}\toweakstar\mu$ in $\mathcal{M}^1(\mathbb S_V)$ as $j \to \infty$.
    \end{enumerate}
\end{definition}

\begin{remark}
\label{rem:cube_not_Omega}
We are exclusively working in the domain $Q=(0,1)^d$ for notational reasons, but any other domain $\Omega\subseteq\R^d$ with Lipschitz boundary is possible as well. Note moreover that a concentrating sequence as above cannot be equi-integrable.
\end{remark}
\begin{remark}
    We note that in~(i) above one could also allow for $\mathcal{A}u_j=w_j$, with $\|w_j\|_{{\rm L}^1}$ uniformly bounded and $w_j\to 0$ in measure. This generalization will be clear from the proofs.
\end{remark}
We also denote by $\overline{\rm co}^{w*}\, \mathcal{W}_{\Lambda_\mathcal{A}}$ the weak* closure of the convex hull of the set
\begin{equation}
    \mathcal{W}_{\Lambda_{\mathcal{A}}}:=\bigl\{\mu\in\mathcal{M}^1(\mathbb S_V) \;:\; [\mu]\in\Lambda_\mathcal{A}\bigr\}.
\end{equation}

\begin{theorem}
\label{thm:vanishing_mass}
Let $\mu\in\mathcal{Y}_\mathcal{A}$, with $\mathcal{A}$ a first-order constant-coefficient linear differential operator. Then, $\mu\in\overline{\rm co}^{w*}\, \mathcal{W}_{\Lambda_{\mathcal{A}}}$ and there exists a probability measure $\pi\in\mathcal{M}^1(\mathcal{W}_{\Lambda_{\mathcal{A}}})$ such that 
\begin{equation}
    \label{eq:vanishing_mass}
    \mu=\int_{\mathcal{W}_{\Lambda_{\mathcal{A}}}}\nu\de\pi(\nu).
\end{equation}
In particular, Conjecture~\ref{conj:vmc} holds true.
\end{theorem}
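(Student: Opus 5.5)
The first claim is the main one, and the plan is to prove it by Hahn--Banach duality. The representation \eqref{eq:vanishing_mass} will then follow from it by a Choquet-type argument.

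\textbf{From the closure statement to \eqref{eq:vanishing_mass}.} The set $\mathcal{W}_{\Lambda_\mathcal{A}}$ is weak* closed in $\mathcal{M}^1(\mathbb S_V)$, because the barycenter map is weak* continuous and $\Lambda_\mathcal{A}$ is closed. It is therefore a compact metrizable subset of a locally convex space. Any point of its closed convex hull is then the barycenter of a probability measure $\pi$ on $\mathcal{W}_{\Lambda_\mathcal{A}}$. To see this, approximate $\mu$ by finite convex combinations $\sum_i t_i\nu_i$, view each one as the discrete measure $\sum_i t_i\delta_{\nu_i}$, and extract a weak* limit point, using the compactness of $\mathcal{M}^1(\mathcal{W}_{\Lambda_\mathcal{A}})$.

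\textbf{Duality reformulation.} Suppose $\mu\notin\overline{\rm co}^{w*}\mathcal{W}_{\Lambda_\mathcal{A}}$. Hahn--Banach gives $f\in{\rm C}(\mathbb S_V)$ with
\[
\int f\de\mu<0\le\int f\de\nu\qquad\text{for all }\nu\in\mathcal{W}_{\Lambda_\mathcal{A}},
\]
possibly after adding a constant. Extend $f$ positively $1$-homogeneously to $V$. The condition on $\mathcal{W}_{\Lambda_\mathcal{A}}$ means that for every $\nu$ with $[\nu]\in\Lambda_\mathcal{A}$ we have $\int f\de\nu\ge0$. This says that the convex envelope of $f$ restricted to $\Lambda_\mathcal{A}$ is nonnegative, in the following sense: set
\[
g(v):=\inf\Bigl\{\textstyle\int f\de\nu:\ \nu\in\mathcal{M}^+(\mathbb S_V),\ [\nu]=v\Bigr\}.
\]
Then $g$ is the largest positively $1$-homogeneous convex function below $f$, and $g\ge0$ on $\Lambda_\mathcal{A}$.

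The goal is then to show that every $\mu\in\mathcal{Y}_\mathcal{A}$ satisfies $\int f\de\mu\ge0$. Since $\int f\,\de\Theta_{u_j}=\int_Q f(u_j)\de x$, this amounts to the lower bound
\[
\liminf_j\int_Q f(u_j)\de x\ge0
\]
for all $\mathcal{A}$-free sequences with $u_j\to0$ in measure and $\|u_j\|_{{\rm L}^1}=1$. Because $f\ge g$, it suffices to prove this for $g$, which is convex and nonnegative on the wave cone. This is exactly the ``barrier function'' convexity argument: $g$ may be negative off $\Lambda_\mathcal{A}$, but only in elliptic directions.

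\textbf{Key step and main obstacle.} I would construct a barrier. Write $g=h-c\,|P u|$-type corrections, where $h\ge0$ is convex and $1$-homogeneous and the negative part of $g$ is controlled by a quantity that vanishes on $\Lambda_\mathcal{A}$. The natural candidate is a homogeneous function of $\mathbb A(\xi)$-ellipticity, but it must be built in physical space and not via Fourier multipliers, which fail in ${\rm L}^1$.

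Concretely, I would mollify $u_j$ by the heat flow at a scale $t_j\to0$ chosen according to the speed of convergence in measure. Then $u_j=(u_j-e^{t_j\Delta}u_j)+e^{t_j\Delta}u_j$, and the smoothed part is small in measure-weighted sense. One then uses a localized blow-up: at concentration points, rescaled $u_j$ converge to $\mathcal{A}$-free limits whose ``ellipticity'' part is compact by the first-order structure and Theorem~\ref{thm:singular_density} applied to limit measures. Hence the negative contribution of $g$ vanishes asymptotically, giving $\liminf\int g(u_j)\ge0$.

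I expect the central difficulty to be exactly here. Theorem~\ref{thm:singular_density} only controls polar directions of the limit measure, not the oscillating directions inside concentrations. The heat-flow smoothing must therefore be calibrated so that the error terms are ${\rm o}(1)$ in ${\rm L}^1$ while the main term sees only wave-cone directions.
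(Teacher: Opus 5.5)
\textbf{There is a genuine gap.} Your two outer steps are correct and coincide with the paper's. The Hahn--Banach argument with the positively $1$-homogeneous convex envelope $g$ of $f$ is Proposition~\ref{prop:convex_dicomoty}. You should still check $g>-\infty$: append the atom $-v/|v|$ with weight $|v|$ to any admissible decomposition of $v$ and use $0\in\Lambda_{\mathcal A}$. The extraction of $\pi$ is Proposition~\ref{prop:Choquet_repr}.

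The gap is that the entire substance of the theorem is the remaining inequality
\[
  \liminf_{j\to\infty}\int_Q g(u_j)\de x\geq 0 \qquad\text{for every convex, positively $1$-homogeneous $g$ with $g\geq 0$ on $\Lambda_{\mathcal A}$,}
\]
which is Proposition~\ref{pro[:compensated_compactness_convex}. For it you give no argument, and the route you sketch cannot supply one.

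\textbf{Why your sketched route fails.} Theorem~\ref{thm:singular_density} and blow-ups only see weak* limits, i.e.\ barycenters of the local concentration-angle measures, and only where the limit is singular. The theorem, by contrast, asserts that $\mu$ itself splits into pieces with barycenter in $\Lambda_{\mathcal A}$, even when $[\mu]\notin\Lambda_{\mathcal A}$. Indeed, the paper derives Theorem~\ref{thm:singular_density} from Theorem~\ref{thm:vanishing_mass}, not conversely.

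Example~2 in the Appendix shows the problem. There the concentration is uniform, and the limit $\tfrac12{\rm Id}\,\mathscr L^2$ is absolutely continuous with polar ${\rm Id}/\sqrt2\notin\Lambda_{\rm div}$. Theorem~\ref{thm:singular_density} is silent, and every blow-up reproduces the same homogeneous situation, so no localization yields information.

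The split $u_j=(u_j-\mathcal P_{t_j}u_j)+\mathcal P_{t_j}u_j$ does not help either. The rough part still carries the concentration and is again $\mathcal A$-free, so the same question reappears for it; nothing makes it ``see only wave-cone directions''. Your correction ``$g=h-c|Pu|$'' would require projections onto $\ker\mathbb A(\xi)$ in Fourier space, which are not available in ${\rm L}^1$.

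\textbf{The missing idea.} What is needed is a physical-space ${\rm L}^1$ substitute for Tartar's estimate, namely \eqref{eq:almost_exposed}. The paper obtains it as follows.

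\emph{Setup.} Localize to $Q_s$, fix $T$, send $j\to\infty$ first and $T\to 0$ last, and handle $Q\setminus Q_s$ by Jensen together with $|u|(\partial Q)=0$. Run the backward heat flow of $v=\chi_s u_j$ and $f=|v|+\sqrt T|\mathcal A v|$. Set $S=\mathcal P_{T-\frarg}f$, $B=\mathcal P_{T-\frarg}v/S$ (so $|B|\le 1$) and $\zeta=\nabla\log S$.

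\emph{Ellipticity off the wave cone.} The first-order Leibniz rule gives $\mathcal A B+\mathbb A(\zeta)B=\eta$. By compactness, $|\zeta|^2\le C(|\nabla B|^2+|\eta|^2)$ wherever $g(B)<-\eps$. The right-hand side has weighted space-time integral $\lesssim M_T$, thanks to $\mathcal H_\zeta|B|^2=2|\nabla B|^2$.

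\emph{Mass tracking.} The function $m^g=\mathcal P_{T-\frarg}g(v)/S\geq g(B)$ solves $\mathcal H_\zeta m^g=0$. One tracks the mass $\rho=Sh$ of a heat flow killed at rate $|\zeta|^2$ on $\{g(B)\geq-\eps\}$, and splits it three ways:
\begin{itemize}
\item Killed mass only ever meets $g(B)\geq -\eps$.
\item Surviving mass ending at density below $R$ is bounded by $M_{T,R}\to 0$. This is exactly where $u_j\to 0$ in measure enters.
\item Surviving mass that climbs from $S_0\le R_0$ to $\{S\geq R\}$ is $O(M_T/\log(R/R_0))$. This follows from a barrier given by a smooth cutoff of $\log(S/R_0)/\log(R/R_0)$, whose cost $\int\!\!\int\rho|\zeta|^2$ is $\le CM_T$: on $\{g(B)<-\eps\}$ by the ellipticity estimate, and on $\{g(B)\geq-\eps\}$ because it is the killed mass.
\end{itemize}

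Nothing in your proposal plays the role of this mechanism. As written, it establishes only the easy implication that Proposition~\ref{pro[:compensated_compactness_convex} implies Theorem~\ref{thm:vanishing_mass}.
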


Theorem~\ref{thm:vanishing_mass} does not yet imply that the measures in the decomposition belong themselves to the class $\mathcal{Y}_\mathcal{A}$ and indeed this might not be the case in general. However we can, in the restricted class of \emph{constant-rank} operators, that is,
    \begin{equation}
        \label{eq:constant_rank}
        {\rm rank}\, \mathbb A(\xi)={\ell}  \quad\text{for all $\xi\neq 0$}
    \end{equation}
    with a constant $\ell \in \N$, and such that $\Lambda_{\mathcal{A}}$ is \emph{spanning}, that is,
    \begin{equation}
        \label{eq:spanning}
        {\rm span}\, \Lambda_\mathcal{A}=V,
    \end{equation}
infer that indeed the $\nu$'s in the decomposition~\eqref{eq:vanishing_mass} can be chosen to belong to $\mathcal{Y}_\mathcal{A}$:

\begin{theorem}
\label{thm:vanishing_mass2}
In the situation of Theorem~\ref{thm:vanishing_mass} assume additionally that $\mathcal{A}$ is of constant rank and $\Lambda_{\mathcal{A}}$ is spanning (this class includes the divergence and the curl). Then, in the decomposition~\eqref{eq:vanishing_mass} we may additionally require that $\nu\in\mathcal{Y}_\mathcal{A}$ for $\pi$-almost every $\nu$. Moreover, $\overline{\rm co}^{w*}\, \mathcal{W}_{\Lambda_{\mathcal{A}}} = \mathcal{Y}_\mathcal{A}$.
\end{theorem}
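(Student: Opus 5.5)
We need to prove Theorem~\ref{thm:vanishing_mass2}. Granting Theorem~\ref{thm:vanishing_mass}, it suffices to establish the set identity $\overline{\rm co}^{w*}\,\mathcal{W}_{\Lambda_{\mathcal{A}}} = \mathcal{Y}_\mathcal{A}$. The inclusion $\subseteq$ gives $\mathcal{W}_{\Lambda_\mathcal{A}}\subseteq\mathcal{Y}_\mathcal{A}$. Then, in the decomposition~\eqref{eq:vanishing_mass}, $\pi$-almost every $\nu$ lies in $\mathcal{W}_{\Lambda_\mathcal{A}}$ and hence in $\mathcal{Y}_\mathcal{A}$, which is the additional claim. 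The reverse inclusion $\mathcal{Y}_\mathcal{A}\subseteq\overline{\rm co}^{w*}\,\mathcal{W}_{\Lambda_{\mathcal{A}}}$ is already part of Theorem~\ref{thm:vanishing_mass}. So the work is to show that $\mathcal{Y}_\mathcal{A}$ contains $\mathcal{W}_{\Lambda_\mathcal{A}}$, is convex, and is weak* closed.

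\emph{Closedness and convexity.} Closedness follows by a diagonal argument. If $\mu_k\in\mathcal{Y}_\mathcal{A}$ and $\mu_k\toweakstar\mu$, we choose generating sequences and pick, for each $k$, one element $u^k_{j(k)}$ whose direction distribution $\Theta$ is $1/k$-close to $\mu_k$ in a metric for the weak* topology, and for which $|\{|u^k_{j(k)}|>1/k\}|<1/k$. Condition~(iv) is obtained after passing to a subsequence by weak* compactness. The requirement $|u|(\partial Q)=0$ is arranged by first rescaling each $u^k_j$ into a slightly smaller concentric cube and extending by zero; this is harmless, since $\mathcal{A}$-freeness is preserved if we rescale a sequence defined on a slightly larger cube, or alternatively we simply rescale the cube itself, using that the operator is homogeneous of first order.

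For convexity, given $\mu_1,\mu_2\in\mathcal{Y}_\mathcal{A}$ and $\lambda\in(0,1)$, we place rescaled copies $x\mapsto c\,u^i_j((x-a_i)/r)$ of the two generating sequences in two disjoint subcubes. The constants are chosen so that the $\mathrm{L}^1$ masses are $\lambda$ and $1-\lambda$. The obstacle is gluing: the sum must remain smooth and $\mathcal{A}$-free across $Q$. Cutting off with $\varphi u$ produces an error $\mathcal{A}(\varphi u)=\sum_k A_k(\partial_k\varphi)\,u$. This error is small in $\mathrm{L}^1$ only if $u_j$ is small away from its concentration set, which is not guaranteed. Here constant rank is essential. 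We will use the potential representation available for constant-rank operators (Raită): $u=\mathcal{B}w$ up to a constant, with $\mathcal{A}\circ\mathcal{B}=0$. Cutting off at the level of potentials gives $\mathcal{B}(\varphi w)$, which is exactly $\mathcal{A}$-free. The error terms involve lower-order derivatives of $w$, and we will show these tend to zero in $\mathrm{L}^1$ because $u_j\to0$ in measure and is $\mathrm{L}^1$-bounded; a compactness (Rellich-type) argument applies to the potentials after subtracting polynomial parts. Alternatively, by the remark after Definition~\ref{def:pure_concentrations}, errors $\mathcal{A}u_j=w_j$ with $w_j\to0$ in measure and bounded in $\mathrm{L}^1$ are admissible, and this gives a softer route. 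Since the errors do not concentrate mass, the direction distributions converge to $\lambda\mu_1+(1-\lambda)\mu_2$.

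\emph{Main step: $\mathcal{W}_{\Lambda_\mathcal{A}}\subseteq\mathcal{Y}_\mathcal{A}$.} Let $\nu\in\mathcal{M}^1(\mathbb S_V)$ with $[\nu]=\xi_0\in\ker\mathbb A(\eta)$ for some $|\eta|=1$.

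The first idea is to realise $\nu$ as a superposition of one-dimensional concentrations. For any $v\in\ker\mathbb A(\eta)$, the function $v\,h(x\cdot\eta)$ is $\mathcal{A}$-free. Concentrating $h$ near a hyperplane gives a sequence whose direction distribution is $\delta_{v/|v|}$, so every Dirac mass at a point of $\Lambda_\mathcal{A}\cap\mathbb S_V$ lies in $\mathcal{Y}_\mathcal{A}$. By convexity and closedness, $\mathcal{Y}_\mathcal{A}\supseteq\overline{\rm co}\{\delta_v: v\in\Lambda_\mathcal{A}\}$. This set does not contain all of $\mathcal{W}_{\Lambda_\mathcal{A}}$, because the atoms of $\nu$ need not lie in the wave cone; only the barycenter does.

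The real construction must therefore produce arbitrary directions while keeping the mean in $\Lambda_\mathcal{A}$. We plan to use the spanning hypothesis~\eqref{eq:spanning} together with a two-scale construction. Write $\nu$ approximately as $\sum_i t_i\delta_{v_i}$ with $\sum_i t_iv_i=\xi_0$. Decompose each $v_i=\sum_m\lambda_{im}w_m$ with $w_m\in\Lambda_\mathcal{A}$. Then build, on a thin slab of width $\varepsilon$ around the hyperplane $\eta^\perp$, fine laminate-type oscillations that take the value approximately $v_i$ on a set of relative mass $t_i$. The fine oscillations are produced by the spanning wave-cone directions. Because the average equals $\xi_0\in\ker\mathbb A(\eta)$, the slab profile is $\mathcal{A}$-compatible at the macroscopic scale. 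The mismatch between the oscillating field and its average is handled by the potential correction of the previous paragraph. This correction has small $\mathrm{L}^1$ norm relative to the concentrated mass, provided the oscillation scale is much smaller than $\varepsilon$.

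\emph{Expected obstacle.} Making this quantitative is where we expect the main difficulty. We need $\mathrm{L}^1$ mass $1$, values of size roughly $\varepsilon^{-1}$ with prescribed direction statistics, and the correction negligible. The whole argument rests on constant-rank potentials and homogeneity scaling. We would first try it for $\mathcal{A}=\mathrm{curl}$, where we can work explicitly with gradients, and then transfer the argument to the general case via $\mathcal{B}$.
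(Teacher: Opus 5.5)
Your reduction matches the paper's: Theorem~\ref{thm:vanishing_mass} already gives $\mathcal{Y}_\mathcal{A}\subseteq\overline{\rm co}^{w*}\,\mathcal{W}_{\Lambda_{\mathcal{A}}}$, so everything rests on $\mathcal{W}_{\Lambda_{\mathcal{A}}}\subseteq\mathcal{Y}_\mathcal{A}$ together with convexity and closedness. But your main step is only a plan, and the construction it describes cannot work. Suppose a field inside the slab takes values close to $\varepsilon^{-1}v_i$ on sets of relative mass $t_i$, with mean $\varepsilon^{-1}\xi_0$. After blow-up it carries an \emph{oscillation} $\mathcal{A}$-free Young measure $\sigma$, and such measures are rigid. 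Take $\mathcal{A}=\mathrm{curl}$ on $\R^{2\times 2}$ and $\nu=\frac12\delta_{\mathrm{Id}/\sqrt2}+\frac12\delta_{-\mathrm{Id}/\sqrt2}\in\mathcal{W}_{\Lambda_{\rm curl}}$. A gradient Young measure $\sigma$ with ${\rm supp}\,\sigma\subseteq\{c\,\mathrm{Id}:c\in\R\}$ and $[\sigma]=0$ would satisfy $\int\det\de\sigma>0=\det[\sigma]$ unless $\sigma=\delta_0$. This contradicts the fact that the null Lagrangian $\det$ is affine on gradient Young measures. Splitting $v_i=\sum_m\lambda_{im}w_m$ into wave-cone pieces yields laminates whose values are built from the $w_m$, not from the $v_i$. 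The ``mismatch'' you want to remove by a potential correction is therefore of the same size as the field, not a small error. A generating sequence for such a $\nu$ must hide structures of negligible $\mathrm{L}^1$-mass but much larger amplitude. The paper notes at the end of the Appendix that no explicit generating sequence is available even for this $\nu$.

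The missing idea is non-constructive. By the characterization of generalized $\mathcal{A}$-free Young measures~\cite{KR10,AR21,KR22}, which is where constant rank is used, $\nu\in\mathcal{Y}_\mathcal{A}^{\rm unif}$ as soon as $f([\nu])\le f(0)+\int_{\mathbb S_V}f^\infty\de\nu$ for all $\Lambda_\mathcal{A}$-convex $f$ of linear growth for which $f^\infty$ exists. Normalize $f(0)=0$. Convexity along the ray through $[\nu]\in\Lambda_\mathcal{A}$ gives $f([\nu])\le f^\infty([\nu])$. The Kirchheim--Kristensen theorem~\cite{KK16} says $f^\infty$ is convex at points of $\Lambda_\mathcal{A}$; this is where spanning is used, and it gives $f^\infty([\nu])\le\int_{\mathbb S_V}f^\infty\de\nu$.

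Your auxiliary steps also need repair. Closedness of $\mathcal{Y}_\mathcal{A}$ by diagonalization breaks down at condition~(iv), because the limits $u^k$ may drift towards $\partial Q$, and neither of your fixes helps. Extending a shrunken copy by zero creates a jump, so the result is neither smooth nor $\mathcal{A}$-free. Restricting to a smaller cube discards the part of $|u^k_j|$ that concentrates at $\partial Q$, since only $|u^k|(\partial Q)=0$ is assumed, and so it changes $\Theta$. The paper sidesteps this by working in $\mathcal{Y}_\mathcal{A}^{\rm unif}$. That class is convex and weak* closed (Lemma~\ref{lem:hom_weak_star_closed}), because $|u_j|\toweakstar\mathscr L^d\mres Q$ rules out boundary concentration.

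For convexity no gluing is needed. Proposition~\ref{prop:Young_convexity} simply takes $tu^1_j+(1-t)u^2_{n_j}$ on all of $Q$, with $n_j$ chosen so that $\int_Q\min\{|u^1_j|,|u^2_{n_j}|\}\de x\to0$, and applies the almost-additivity estimate~\eqref{eq:almost_additive}. Your potential-based cut-off is flawed as stated. The lower-order potential terms converge in $\mathrm{L}^1_{\rm loc}$ to those of the potential of the weak* limit $u$, which need not vanish: convergence to zero in measure does not force $u=0$, as Example~1 in the Appendix shows. Your ``softer route'' with $\mathcal{A}u_j=w_j$ would only give convexity of a possibly larger class than $\mathcal{Y}_\mathcal{A}$.
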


The proof of these results rests on three central ideas: First, in Proposition~\ref{prop:Young_convexity} we show the convexity of $\mathcal{Y}_{\mathrm{A}}$ without fixing the barycenter (the convexity of generalized Young measures is classical when the barycenter is held fixed).

Second, Propositions~\ref{prop:convex_dicomoty},~\ref{prop:Choquet_repr} together characterize -- in the spirit of Choquet's theorem -- the decomposition~\eqref{eq:vanishing_mass} via the non-negativity of all integrals of a ``barrier function'' $q$ with respect to the measure $\mu$ under investigation. These barrier functions are the convex and positively $1$-homogeneous functions that are non-negative on the wave cone $\Lambda_{\mathcal{A}}$.

Finally, we establish the non-negativity of said integrals via a compensated compactness argument, which is stated in Proposition~\ref{pro[:compensated_compactness_convex} (this is Theorem~\ref{thm:compensated_compactness} under an additional convexity assumption, but is established before all the other theorems). The intricate proof is based on identities and estimates for the heat flow, which replaces classical Fourier methods since the latter are not available in an ${\rm L}^1$-setting.

In this context let us recall that the classical Tartar theorem of compensated compactness (see~\cite{T79,T83}, reproduced in~\cite[Theorem 8.7]{R26}) only applies to quadratic forms, while our barrier functions are not necessarily square roots of quadratic forms, but merely positively $1$-homogeneous. Nevertheless, our proof exhibits some parallels to Tartar's classical argument: There, it is shown that for any quadratic form $Q$ that is non-negative on $\Lambda_\mathcal{A}$ and for every $\delta>0$ one may find a constant $C_\delta>0$ such that
\begin{equation*}
    Q(Z)\geq-\delta|Z|^2-C_\delta|\mathbb A(\xi)Z|^2,  \qquad Z \in \C^d,
\end{equation*}
see~\cite[Equation (8.11)]{R26}. This estimate roughly says that if $Q(Z)$ is negative, then $Z$ must be quantitatively away from the wave cone. In this regard, the core estimate~\eqref{eq:almost_exposed} in our proof is analogous, even though it is established in a very different way.

We also consider various applications of our main results, Theorems~\ref{thm:vanishing_mass} and~\ref{thm:vanishing_mass2}. The first one is a new proof of Theorem~\ref{thm:singular_density}:

\begin{theorem}
\label{thm:giancarlo}
Let $\mathcal{A}$ be a first-order constant-coefficient linear differential operator and let $\mu \in \mathcal{M}(\Omega;V)$ be an $\mathcal{A}$-free measure (i.e., $\mathcal{A}\mu = 0$). Then Theorem~\ref{thm:singular_density} holds true. 
\end{theorem}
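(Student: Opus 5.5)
Fix an $\mathcal{A}$-free measure $\mu$. We will prove \eqref{eq:sing_density} by a blow-up at a $|\mu|^s$-generic point $x_0$, extracting from the blow-ups a concentrating $\mathcal{A}$-free sequence whose concentration Young measure is the Dirac mass at the polar $\frac{\de\mu}{\de|\mu|}(x_0)$. Theorem~\ref{thm:vanishing_mass} then forces this Dirac mass to have barycenter in $\Lambda_{\mathcal A}$.

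\emph{Choice of blow-up points.} Take $x_0$ with $|\mu|^s$-density properties such that:
\begin{enumerate}[(a)]
\item $\frac{\de\mu}{\de|\mu|}$ is $|\mu|$-approximately continuous at $x_0$ with value $P\in\mathbb S_V$;
\item $\frac{|\mu|^a(Q_r(x_0))}{|\mu|(Q_r(x_0))}\to0$;
\item $\frac{|\mu|(Q_r(x_0))}{r^d}\to\infty$;
\item $|\mu|(\partial Q_r(x_0))=0$ for a.e.\ $r$.
\end{enumerate}
Rescale by $T_r(y)=x_0+ry$ and set $\mu_r:=(T_r^{-1})_\sharp\mu\,/\,|\mu|(Q_r(x_0))$ restricted to $Q$; this is again $\mathcal{A}$-free, since $\mathcal{A}$ is first order and homogeneous, and $|\mu_r|(Q)=1$. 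By (a), $\int_Q|\frac{\de\mu_r}{\de|\mu_r|}-P|\de|\mu_r|\to0$. By (b) and (c), the measures $\mu_r$ carry unit mass while their Lebesgue densities, and in fact most of their mass, sit on sets of vanishing Lebesgue measure.

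\emph{Smoothing.} Mollify $\mu_r$ at scale $\eps_r\ll r$, or use the heat flow as elsewhere in the paper, on a slightly smaller cube, and renormalize. The result is a sequence $u_j\in{\rm C}^\infty(Q;V)$ with $\mathcal{A}u_j=0$ (convolution commutes with $\mathcal{A}$) and $\|u_j\|_{{\rm L}^1}=1$. Choosing $\eps_r$ small, by a diagonal argument, achieves three things. First, $u_j\to0$ in measure, because most mass lies in a set of measure $o(1)$ by (c). Second, $|u_j|\mathscr L^d$ stays close to $|\mu_r|$. Third, the polar stays near $P$: Jensen gives $|u_j|\le\rho_\eps*|\mu_r|$, and the deviation $\int|u_j/|u_j|-P|\,|u_j|$ is controlled by $\int|\frac{\de\mu_r}{\de|\mu_r|}-P|\de|\mu_r|$ plus the mass defect $\rho_\eps*|\mu_r|-|u_j|$, which is itself bounded by twice that quantity. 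Hence $\Theta_{u_j}\toweakstar\delta_P$. The boundary condition $|u|(\partial Q)=0$ is handled by (d) and by slightly shrinking cubes.

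\emph{Conclusion and main obstacle.} With these properties, $\delta_P\in\mathcal{Y}_\mathcal{A}$. By Theorem~\ref{thm:vanishing_mass}, $\delta_P=\int\nu\de\pi(\nu)$ with $[\nu]\in\Lambda_\mathcal{A}$. Since $\delta_P$ is an extreme point of $\mathcal{M}^1(\mathbb S_V)$, $\pi$-a.e.\ $\nu$ equals $\delta_P$, so $P=[\delta_P]\in\Lambda_\mathcal{A}$. The decisive step is the smoothing: one must check carefully that mollification preserves the near-alignment of directions, i.e.\ the Jensen defect argument above, while keeping exact $\mathcal{A}$-freeness near $\partial Q$. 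Restricting to inner cubes and using (d) should suffice. The inhomogeneous case $\mathcal{A}\mu=\sigma$ is covered by the same argument, since the rescaled right-hand sides have ${\rm L}^1$ norm $r|\sigma|(Q_r)/|\mu|(Q_r)\to0$ and go to zero in measure; this is allowed by the remark following Definition~\ref{def:pure_concentrations}.
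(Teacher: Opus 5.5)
Your overall strategy matches the paper's. You blow up at a $|\mu|^s$-generic point, mollify to get a smooth $\mathcal{A}$-free sequence generating $\delta_P$, apply Theorem~\ref{thm:vanishing_mass}, and use that $\delta_P$ admits only the trivial decomposition. Your Jensen-defect estimate, which bounds $\int_Q|u_j-P|u_j||\de x$ by twice $\int|\frac{\de\mu_r}{\de|\mu_r|}-P|\de|\mu_r|$, is a fine substitute for the Reshetnyak continuity theorem used in the paper.

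The gap is condition~(iv) of Definition~\ref{def:pure_concentrations}, which you dispose of with ``(d) and slightly shrinking cubes''. Property (d) concerns each fixed $r$ only. The limit of your sequence, however, is $u=P\lambda$ with $\lambda$ a weak* limit of $|\mu_{r_j}|$ on $\overline{Q}$, and nothing in (a)--(d) prevents $\lambda$ from being carried by $\partial Q$. For non-doubling $|\mu|$ this does happen along suitable radii. Take $\mathcal{A}=\partial_2$ on $\R^2$ and $\mu=\lambda_C\otimes\mathscr L^1$, where $\lambda_C$ is a Cantor measure whose generation-$k$ intervals split into two end-children of relative length tending to $0$ with weights $1/k$ and $1-1/k$. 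By Borel--Cantelli, a.e.\ point lies infinitely often in the light child. At the corresponding scales, almost all the mass of the blown-up square sits on one of its vertical sides. The conclusion is trivial for this $\mathcal{A}$, but your construction then does not produce an admissible sequence.

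In that situation $|u|(\partial Q)>0$ and Theorem~\ref{thm:vanishing_mass} cannot be invoked. This is not a technicality. Without~(iv), the last step of the proof of Proposition~\ref{pro[:compensated_compactness_convex} only gives $q(P)\geq q(u(\partial Q))=\lambda(\partial Q)\,q(P)$, which is vacuous when $\lambda(\partial Q)=1$. Restricting to a fixed inner cube $Q'$ does not help either. Then $|\mu_{r_j}|(Q')\to 0$, and after renormalization the smallness coming from (a) and (b) is lost.

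What is missing is non-degeneracy of the blow-up limit in the interior, which is exactly condition~(iii) in the paper's proof. Select radii $r_j$ along which the rescaled measures converge to a tangent measure $\nu$ with $\nu\mres B_{1/2}\neq 0$. One way is the standard fact that $\limsup_{r\to0}|\mu|(B_{r/2}(x_0))/|\mu|(B_r(x_0))\geq 2^{-d}$ for $|\mu|$-a.e.\ $x_0$. Then work on a cube $Q'$ with $\nu(Q')>0$ and $\nu(\partial Q')=0$, rescaled to unit size. On $Q'$ the masses $|\mu_{r_j}|(Q')$ stay bounded away from zero, so (a) and (b) survive the renormalization. The limit also does not charge $\partial Q'$, which gives~(iv). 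With this modification the rest of your argument goes through.

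A minor point: $u_j\to 0$ in measure follows from (b) together with the fact that, for fixed $r$, mollifications of the singular part tend to zero a.e. Condition (c) plays no role there.
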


As a second application, for first-order constant-rank differential operators, we remove the aperture-smallness assumption from the equi-integrability (compensated compactness) results of~\cite{ARDPHRS24,ARA26}, extending them to arbitrary convex—and hence one-sided—cones.

\begin{theorem}
\label{thm:cone}
Let $\mathcal{A}$ be as in Theorem~\ref{thm:vanishing_mass} with the additional assumption that it is of constant rank. Let $K\subseteq V$ be a closed convex cone such that $K\cap\Lambda_\mathcal{A}=\{0\}$ and $K\cap(-K)=\{0\}$. Let $\Omega\subseteq\R^d$ be an open Lipschitz domain and let $(\mu_j)_{j\in\N}\subseteq\mathcal{M}(\Omega;V)$ be such that 
\begin{equation}
    \label{eq:A_free_measures}
    \mathcal{A}\mu_j=0,\qquad\sup_{j\in\N}|\mu_j|(\Omega)<+\infty,
\end{equation}
and
\begin{equation}
    \label{eq:distance_from_K}
    \lim_{j\to\infty}\int_{\Omega}{\rm dist}\bigg(\frac{\de\mu_j}{\de|\mu_j|}(x),K\bigg)\de|\mu_j|(x)=0.
\end{equation}
Then, for every $\Omega^\prime\subseteq\Omega$, writing $\mu_j\mres\Omega^\prime=g_j\mathscr L^d\mres\Omega^\prime+\mu_j^s$ with $g_j \in {\rm L}^1(\Omega^\prime)$, it holds that the family $(g_j)_j$ is equi-integrable in ${\rm L}^1(\Omega^\prime;V)$ and $|\mu^s_j|(\Omega^\prime)\to 0$ as $j\to\infty$. Moreover, if even
\[
  \frac{\de\mu_j}{\de|\mu_j|}(x) \in K \qquad\text{for $|\mu_j|$-a.e.\ $x$ and for all $j \in \N$,}
\]
then $\mu_j^s = 0$ and there is $p = p(K) \in (1,\infty)$ (implicitly also depending on $\mathcal{A}$) such that for all $\Omega^\prime\subseteq\Omega$,
\[
  \sup_{j\in\N} \|\mu_j\|_{{\rm L}^p(\Omega^\prime)} < \infty.
\]
\end{theorem}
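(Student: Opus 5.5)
We argue by contradiction and blow up at points of non-equi-integrability, reducing the claim to the structure of concentration Young measures given by Theorem~\ref{thm:vanishing_mass2}, which applies because $\mathcal{A}$ is of constant rank. Throughout, the cone $K$ is tested against a barrier function. First observe that, since $K$ is a closed convex cone with $K\cap\Lambda_\mathcal{A}=\{0\}$ and $K\cap(-K)=\{0\}$, the compact convex set $\overline{\rm co}\,(K\cap\mathbb S_V)$ does not contain $0$. Thus there exist a linear functional $\ell$ and $c>0$ with $\ell\ge c|\frarg|$ on $K$. The pointed assumption is used exactly here.

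\emph{Barrier for $K$.} Using $K\cap\Lambda_\mathcal{A}=\{0\}$ and compactness of $K\cap\mathbb S_V$, I would construct a convex, positively $1$-homogeneous function $q$ with $q\ge 0$ on $\Lambda_\mathcal{A}$ and $q\le -c'|\frarg|$ on $K$.

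A first candidate is $q(v)=-\ell(v)+M\,{\rm dist}(v,\Lambda_\mathcal{A})$, but ${\rm dist}(\frarg,\Lambda_\mathcal{A})$ need not be convex. I would instead separate the compact convex set $\overline{\rm co}\,(K\cap\mathbb S_V)$ from the closed (possibly nonconvex) cone $\Lambda_\mathcal{A}$ by an appropriate convex function. The most natural choice is to take $q$ as a supremum of linear maps $\lambda$ satisfying $\lambda\ge0$ on $\Lambda_\mathcal{A}$. Whether this works needs care, because $\Lambda_\mathcal{A}$ is symmetric: if $\Lambda_\mathcal{A}$ spans, such linear maps vanish identically. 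So $q$ must genuinely be a nonlinear convex $1$-homogeneous function.

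I would obtain $q$ from the Choquet characterization, Propositions~\ref{prop:convex_dicomoty} and~\ref{prop:Choquet_repr}. A probability measure $\mu$ supported on $K\cap\mathbb S_V$ has barycenter $[\mu]$ with $|[\mu]|\ge\ell([\mu])\ge c$. Moreover, any $\nu$ in $\overline{\rm co}^{w*}\mathcal{W}_{\Lambda_\mathcal{A}}$ concentrated on $K\cap\mathbb S_V$ would have to be an average of measures $\nu_i$ whose barycenters lie in $\Lambda_\mathcal{A}$. Hence it suffices to show directly that no such $\nu$ exists.

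Suppose $\nu=\int\nu'\,d\pi(\nu')$ is concentrated on $K\cap\mathbb S_V$. Then $\pi$-a.e.\ $\nu'$ is concentrated on $K\cap\mathbb S_V$, so $[\nu']\in\overline{\rm co}(K\cap\mathbb S_V)\subseteq K\setminus\{0\}$. This contradicts $[\nu']\in\Lambda_\mathcal{A}$. Allowing $\varepsilon$-neighbourhoods of $K$, the same argument together with weak* compactness shows that no $\mu\in\mathcal{Y}_\mathcal{A}$ charges only a neighbourhood of $K\cap\mathbb S_V$.

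\emph{Blow-up.} Assume that the $g_j$ are not equi-integrable on $\Omega'$, or that $|\mu_j^s|(\Omega')\not\to0$. After mollification, which preserves $\mathcal{A}\mu_j=0$ and, by convexity of ${\rm dist}(\frarg,K)$ and Jensen's inequality, preserves~\eqref{eq:distance_from_K} up to small errors, I would extract a concentrating part. The plan is to use a standard decomposition lemma (Fonseca--Müller--Pedregal type, here in the version for constant-rank $\mathcal{A}$-free fields, via projection onto $\ker\mathcal{A}$ on a cube after localization) to write the sequence as $u_j=v_j+w_j$. Here $v_j$ is equi-integrable and $w_j$ is $\mathcal{A}$-free up to errors vanishing in $L^1$, tends to $0$ in measure, and has mass bounded below.

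Normalizing $w_j$ and passing to the limit gives an element of $\mathcal{Y}_\mathcal{A}$; by Remark~\ref{rem:cube_not_Omega} and the remark allowing $\mathcal{A}u_j=w_j$, the errors are harmless. Its distribution of directions is concentrated on $K\cap\mathbb S_V$, because the mass of $w_j$ sits where $|w_j|\gg|v_j|$, so its direction inherits~\eqref{eq:distance_from_K}. This contradicts the previous step. The main obstacle is this decomposition step in $L^1$, with measures, for constant-rank operators: the projection onto $\ker\mathcal{A}$ is not $L^1$-bounded. I would try to circumvent it by truncating $|u_j|$ at levels $k$, mollifying at matching scales, and using the heat-flow estimates from the proof of Proposition~\ref{pro[:compensated_compactness_convex}.

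\emph{$L^p$ bound.} If the directions lie exactly in $K$, then $\ell(\mu_j)\ge c|\mu_j|$ and $\mu_j^s=0$ by Theorem~\ref{thm:singular_density}, since $K\cap\Lambda_\mathcal{A}=\{0\}$. For higher integrability I would use $\mathcal{A}$-freeness together with the one-sided cone condition to establish a reverse Hölder (Gehring) inequality. By a Caccioppoli-type argument with test potentials $\mathcal{B}$ from the constant-rank exact-sequence theory, $\mu_j=\mathcal{B}U_j$, the quantity $\fint_{B_r}\ell(\mu_j)$ is controlled by $(\fint_{B_{2r}}|\mu_j|^{s})^{1/s}$ for some $s<1$. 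Gehring's lemma then yields a uniform $p>1$.
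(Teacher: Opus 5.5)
\textbf{Where your proposal agrees with the paper.} Your final contradiction matches the paper's argument: an element of $\mathcal{Y}_\mathcal{A}$ with directions in $K\cap\mathbb S_V$ would decompose into measures $\sigma$ with $[\sigma]\in K\cap\Lambda_\mathcal{A}=\{0\}$, while $\ell([\sigma])\geq c$. Deducing $\mu_j^s=0$ from Theorem~\ref{thm:singular_density} is also the same. Note that only Theorem~\ref{thm:vanishing_mass} is needed here. Theorem~\ref{thm:vanishing_mass2} additionally requires a spanning wave cone, which is not assumed.

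\textbf{The gap.} It is the step you flag yourself: producing, from a non-equi-integrable, $L^1$-bounded $\mathcal{A}$-free sequence, an element of $\mathcal{Y}_\mathcal{A}$ whose directions lie in $K\cap\mathbb S_V$. A decomposition $u_j=v_j+w_j$ with $w_j$ asymptotically $\mathcal{A}$-free is not available in $L^1$, and your workaround does not supply one:
\begin{itemize}
\item Truncation destroys the constraint. $\mathcal{A}(u_j\ONE_{\{|u_j|>k\}})=-\mathcal{A}(u_j\ONE_{\{|u_j|\leq k\}})$ is only a first-order derivative of an $L^\infty$ field, not an $L^1$-bounded error, so the relaxed version of condition~(i) does not apply.
\item The heat-flow bound \eqref{eq:almost_exposed} becomes useful only once $M_{T,R}\to 0$, that is, once you already have a sequence tending to zero in measure. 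It cannot separate the concentrating part on its own.
\end{itemize}
As written, the argument does not close.

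\textbf{How the paper avoids this.} It never decomposes; it works with generalized Young measures (Lemma~\ref{lem:concentration_extraction}).
\begin{itemize}
\item Take the nonzero concentration measure $\lambda$ and the angle measures $\nu_x^\infty$ generated by the mollified fields.
\item Average them: $\nu:=\lambda(\Omega')^{-1}\int_{\Omega'}\nu_x^\infty\de\lambda(x)$, where $\lambda(\partial\Omega')=0<\lambda(\Omega')$.
\item Invoke \cite[Lemma~4.5, Proposition~4.4]{ARA26}; this is where constant rank enters. It shows that $\nu$ satisfies the homogeneous generalized Jensen inequalities and is therefore generated by some $w_j\to 0$ in measure with $|w_j|\toweakstar\mathscr L^d$. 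Hence $\nu\in\mathcal{Y}_\mathcal{A}$.
\item Testing with $q={\rm dist}(\frarg,K)$ gives ${\rm supp}\,\nu\subseteq K\cap\mathbb S_V$.
\end{itemize}

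\textbf{Two further steps you still need.} First, passing from ``$(g_j)$ not equi-integrable or $|\mu_j^s|(\Omega')\not\to 0$'' to non-equi-integrability of the mollified fields is not automatic, because mollification can cancel mass. The paper argues by contradiction:
\begin{itemize}
\item take sets $E_j$ with $\mathscr L^d(E_j)\to 0$ and $|\mu_j|(E_j)\geq\eps$, together with cutoffs $\varphi_j$;
\item choose mollification scales so that \eqref{eq:strict_closedness} holds;
\item use \eqref{eq:crucial_geo_ineq} to exclude cancellation.
\end{itemize}
Your $\ell$ would play the role of $\langle e_0,\frarg\rangle$ here, but the argument has to be made. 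Second, your Gehring step rests on a reverse H\"older inequality with exponent below one that you assert without justification. The paper does not prove higher integrability itself; it takes it from \cite[Section~3]{S12}.
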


Next, we formulate a compensated compactness result in the spirit of Tartar's theorem (see~\cite[Theorem 8.7]{R26}), but in the linear growth case, not for quadratic forms. For this, recall that a function $f$ is called $\Lambda_\mathcal{A}$-convex if it is convex along any direction in $\Lambda_{\mathcal{A}}$, that is,
\begin{equation} \label{eq:Lambda_A_qc}
  f((1-\theta) v_0 + \theta v_1) \leq (1-\theta) f(v_0) + \theta f(v_1)
\end{equation}
for all $v_0, v_1 \in V$ with $v_1 - v_0 \in \Lambda_{\mathcal{A}}$ and all $\theta \in (0,1)$. It is well-known (see, e.g.,~\cite{FM99,KK16}) that the so-called $\mathcal{A}$-quasiconvexity implies $\Lambda_\mathcal{A}$-convexity. For a function $f:V\to\R$ we also define (whenever it exists) the \emph{strong recession function} $f^\infty:V\to\R$ as 
\begin{equation*}
    f^\infty(v)=\lim_{\underset{t\to+\infty}{v'\to v}}\frac{f(tv')}{t},  \qquad v\in V.
\end{equation*}

\begin{theorem} \label{thm:compensated_compactness}
Let $(u_j)_j$ be a sequence as in Definition~\ref{def:pure_concentrations} and let the assumptions of Theorem~\ref{thm:vanishing_mass2} concerning $\mathcal{A}$ be satisfied. Then, for every $f:V\to\R$ that is $\Lambda_{\mathcal{A}}$-convex (or convex), non-negative on $\Lambda_\mathcal{A}$, and for which $f^\infty$ exists, it holds that
\begin{equation*}
    \liminf_{j\to\infty}\int_Q f(A+u_j)\de x\geq f(A)\qquad\text{for all $A\in V$}.
\end{equation*}
\end{theorem}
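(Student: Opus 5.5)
We reduce the statement to Theorem~\ref{thm:vanishing_mass2} by a standard splitting of the integrand into a bounded ``oscillation'' part and a recession part that only sees the concentration Young measure. Fix $A\in V$. Since $f^\infty$ exists, $f$ has linear growth: $|f(v)|\le C(1+|v|)$. Moreover $f^\infty$ is positively $1$-homogeneous and continuous. It inherits $\Lambda_{\mathcal A}$-convexity (respectively convexity) as a pointwise limit of the rescalings $f(t\,\cdot)/t$, and it is non-negative on $\Lambda_{\mathcal A}$ because $\Lambda_{\mathcal A}$ is a cone and $f\ge0$ there.

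First we split the integral. Given $\varepsilon>0$, the existence of the strong recession function yields $R>0$ with $|f(A+v)-f^\infty(v)|\le\varepsilon|v|$ whenever $|v|\ge R$; the shift by $A$ is absorbed since $f^\infty$ is a locally uniform limit. We write
\[
\int_Q f(A+u_j)\de x=\int_{\{|u_j|<\delta\}}f(A+u_j)\de x+\int_{\{|u_j|\ge\delta\}}f(A+u_j)\de x .
\]
Because $u_j\to0$ in measure and $f$ is continuous with linear growth, the first term tends to $f(A)$ up to an error $\omega(\delta)$, where $\omega$ is the modulus of continuity of $f$ near $A$. In the second term, the mass on $\{\delta\le|u_j|<R\}$ vanishes, since that set has vanishing measure and the integrand is bounded there. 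On $\{|u_j|\ge R\}$ we replace $f(A+u_j)$ by $f^\infty(u_j)=|u_j|f^\infty(u_j/|u_j|)$ with error at most $\varepsilon\|u_j\|_{{\rm L}^1}=\varepsilon$. Finally, the contribution of $\{|u_j|<R\}$ to $\int f^\infty(u_j)$ is at most $\max_{\mathbb S_V}|f^\infty|\int_{\{|u_j|<R\}}|u_j|$. This tends to $0$ because $\|u_j\|_{{\rm L}^1}=1$ and $u_j\to0$ in measure on a bounded set: the integral over $\{|u_j|<\delta'\}$ is at most $\delta'|Q|$, and the integral over $\{\delta'\le|u_j|<R\}$ is at most $R\,|\{|u_j|\ge\delta'\}|\to0$. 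Collecting the errors and letting $\delta,\varepsilon\to0$ gives
\[
\liminf_{j\to\infty}\int_Q f(A+u_j)\de x\ \ge\ f(A)+\liminf_{j\to\infty}\int_{\mathbb S_V}f^\infty\de\Theta_{u_j}= f(A)+\int_{\mathbb S_V}f^\infty\de\mu ,
\]
using $\Theta_{u_j}\toweakstar\mu$ and the continuity of $f^\infty$ on $\mathbb S_V$. Passing to a subsequence realizing the $\liminf$ ensures $\mu\in\mathcal Y_{\mathcal A}$ exists.

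It therefore remains to show $\int f^\infty\de\mu\ge0$, and this is the main step. By Theorem~\ref{thm:vanishing_mass2}, $\mu=\int\nu\de\pi(\nu)$ with each $\nu\in\mathcal W_{\Lambda_{\mathcal A}}\cap\mathcal Y_{\mathcal A}$, so it suffices to prove $\int f^\infty\de\nu\ge0$ for every $\nu$ with $[\nu]\in\Lambda_{\mathcal A}$.

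If $f$ is convex, then $f^\infty$ is convex and $1$-homogeneous, and Jensen's inequality gives $\int f^\infty\de\nu\ge f^\infty([\nu])\ge0$, since $[\nu]\in\Lambda_{\mathcal A}$. This case is exactly the convex compensated compactness statement, Proposition~\ref{pro[:compensated_compactness_convex}, so it can also be cited directly.

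In the merely $\Lambda_{\mathcal A}$-convex case Jensen's inequality is unavailable. Here we use $\overline{\rm co}^{w*}\mathcal W_{\Lambda_{\mathcal A}}=\mathcal Y_{\mathcal A}$ and the structure of the decomposition. The plan is to show that the barrier-type characterization (Propositions~\ref{prop:convex_dicomoty} and~\ref{prop:Choquet_repr}) can be run with $f^\infty$ in place of a convex barrier. One route is to replace $f^\infty$ by its largest convex, positively $1$-homogeneous minorant $g$ that is non-negative on $\Lambda_{\mathcal A}$, and to prove that $\int f^\infty\de\nu=\int g\de\nu$ for the extremal $\nu$ appearing in the Choquet decomposition. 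The expected tool for this is the support-cardinality result on extremal concentration Young measures: extremal $\nu$ are supported on few points lying on a $\Lambda_{\mathcal A}$-segment through the origin, along which $\Lambda_{\mathcal A}$-convexity of $f^\infty$ applies like genuine convexity. I expect this reduction for non-convex $f$ to be the principal obstacle. If it fails, the statement should be restricted to the convex case via Proposition~\ref{pro[:compensated_compactness_convex}.
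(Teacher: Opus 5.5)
Your reduction of the claim to $\int_{\mathbb S_V} f^\infty\de\mu\ge 0$ is correct and is essentially the paper's splitting; the paper cuts at levels $M_j\uparrow\infty$ rather than at fixed $\delta,R$. The convex case is also fine. The gap is the $\Lambda_{\mathcal A}$-convex case, which you leave open. Your premise that Jensen's inequality is unavailable there is exactly what is false.

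The missing ingredient is the Kirchheim--Kristensen theorem \cite[Theorem~1.1]{KK16}, which the paper already uses in the proof of Theorem~\ref{thm:vanishing_mass2}. Because $\Lambda_{\mathcal A}$ spans $V$, the positively $1$-homogeneous $\Lambda_{\mathcal A}$-convex function $f^\infty$ is convex at every $z\in\Lambda_{\mathcal A}$. That is, there is a linear $\ell_z : V\to\R$ with $\ell_z\le f^\infty$ and $\ell_z(z)=f^\infty(z)$. Consequently, for every $\nu\in\mathcal W_{\Lambda_{\mathcal A}}$,
\[
\int_{\mathbb S_V} f^\infty\de\nu\;\ge\;\ell_{[\nu]}([\nu])\;=\;f^\infty([\nu])\;\ge\;0 .
\]
Integrating in $\pi$ from Theorem~\ref{thm:vanishing_mass} then gives $\int f^\infty\de\mu\ge 0$. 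This is precisely where the spanning hypothesis enters, and Theorem~\ref{thm:vanishing_mass} suffices here; you do not need $\nu\in\mathcal Y_{\mathcal A}$.

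The substitute route you sketch does not close this gap. Theorem~\ref{thm:extremals} only bounds the number of atoms of an extremal $\nu$. It does not place them on a $\Lambda_{\mathcal A}$-segment through $0$; only the barycenter lies in $\Lambda_{\mathcal A}$.

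For instance, take $z\in\mathbb S_V\setminus\Lambda_{\mathcal A}$ and $\nu=\frac12(\delta_z+\delta_{-z})$. This $\nu$ is extremal in $\mathcal W_{\Lambda_{\mathcal A}}$: any probability measure on $\{z,-z\}$ has barycenter $(2\alpha-1)z$, which lies in the union of subspaces $\Lambda_{\mathcal A}$ only for $\alpha=\frac12$. For this $\nu$, the inequality $\int f^\infty\de\nu\ge 0$ reads $f^\infty(z)+f^\infty(-z)\ge 0$. It cannot come from $\Lambda_{\mathcal A}$-convexity along $[-z,z]$, since that segment's direction is not in $\Lambda_{\mathcal A}$. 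It is the convexity of $f^\infty$ at $0$, which is part of the nontrivial content of \cite{KK16}.

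Similarly, suppose a convex, positively $1$-homogeneous $g\le f^\infty$ with $g\ge 0$ on $\Lambda_{\mathcal A}$ existed. Then you would be done by Jensen directly, with no extremality argument needed. But the existence of such a $g$ already yields a linear minorant of $f^\infty$, namely a subgradient of $g$ at $0$. That is again the Kirchheim--Kristensen result, so this route presupposes what has to be proved.
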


One may put such a result into context with Morrey's Conjecture (see, e.g., Chapter~8 in~\cite{R26} for an overview on this central question in the Calculus of Variations) as follows: Since $\mathcal{A}$-quasiconvexity and $\Lambda_{\mathcal{A}}$-convexity do not agree in general (for the classical case $\mathcal{A} = {\rm curl}$ see~\cite{S92,FS08,AIPS12,G18,GT22,AFGKK24,C26}), lower semicontinuity for integral functionals requires $\mathcal{A}$-quasiconvexity, not merely $\Lambda_{\mathcal{A}}$-convexity (see, e.g.,~\cite{FM99}). For pure concentrations, however, the preceding result shows that $\Lambda_{\mathcal{A}}$-convexity suffices. So, in this sense one may say that concentrations are ``simpler'' than oscillations and that the analogue of Morrey's Conjecture is \emph{false} for pure concentrations, which we sum up as follows:

\begin{corollary}
\label{cor:Morrey_conj}
The validity of Morrey's Conjecture (that is, the non-equivalence of quasiconvexity and rank-one convexity) cannot be witnessed by a function with linear growth that is non-negative on the cone of rank-one matrices and for which $f^\infty$ exists, applied to a purely concentrating sequence (as in Definition~\ref{def:pure_concentrations}).
\end{corollary}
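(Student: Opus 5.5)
The corollary follows from Theorem~\ref{thm:compensated_compactness} specialised to $\mathcal{A}=\mathrm{curl}$, so that $\Lambda_{\mathcal{A}}$ is the cone of rank-one matrices.

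First I check the hypotheses. Row-wise curl on $\R^{m\times d}$ is of constant rank, and its wave cone $\{a\otimes b\}$ spans $\R^{m\times d}$. Hence Theorem~\ref{thm:vanishing_mass2}, and with it Theorem~\ref{thm:compensated_compactness}, applies. Curl-free fields on the cube are exactly gradients, $u_j=\nabla\varphi_j$. Purely concentrating sequences in the sense of Definition~\ref{def:pure_concentrations} are therefore gradient sequences that are $\mathrm{L}^1$-normalised and tend to zero in measure. Moreover, $\Lambda_{\mathcal{A}}$-convexity is precisely rank-one convexity.

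Next I argue by contradiction. Suppose Morrey's Conjecture were witnessed in the stated way. That means there is a rank-one convex $f$ with linear growth, non-negative on rank-one matrices, with $f^\infty$ existing, which is not quasiconvex. Moreover, the failure of the quasiconvexity inequality is detected by a purely concentrating sequence: for some $A$ and some admissible $(u_j)_j$ we have $\liminf_j\int_Q f(A+u_j)\,\mathrm{d}x<f(A)$. This is the Jensen-type inequality expressing quasiconvexity along the sequence. Theorem~\ref{thm:compensated_compactness} gives exactly the opposite inequality $\liminf_j\int_Q f(A+u_j)\,\mathrm{d}x\ge f(A)$ for every such $f$, $A$ and sequence, which is a contradiction.

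The only point needing care is the interpretation: one has to fix what "witnessed by" means, namely a violation of the lower semicontinuity/Jensen inequality along a purely concentrating sequence. One should also note that the non-negativity on $\Lambda$ and the existence of $f^\infty$ are exactly the hypotheses under which Theorem~\ref{thm:compensated_compactness} was proved. Once this is fixed, there is no further obstacle; the statement is a direct translation of Theorem~\ref{thm:compensated_compactness}.
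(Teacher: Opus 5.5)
Your proposal is correct and follows the paper's approach: the corollary is stated there without a separate proof, as a direct restatement of Theorem~\ref{thm:compensated_compactness} for $\mathcal{A}=\mathrm{curl}$, which is of constant rank, has the spanning rank-one wave cone, and for which $\Lambda_{\mathrm{curl}}$-convexity is rank-one convexity. Your hypothesis check and your reading of ``witnessed'' as a violation of the Jensen inequality along a purely concentrating sequence are exactly what the remark preceding the corollary intends.
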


On an intuitive level, this may be explained as follows: The convergence to zero in measure in effect forces all individual concentration amplitudes present in a non-equi-integrable $\mathcal{A}$-free sequence to be $\Lambda_{\mathcal{A}}$-connected to the zero vector. This severely constrains the admissible ``shapes'' of these concentrations, ultimately collapsing their degrees of freedom so that only superpositions of simple concentrations remain possible.

Finally, for the following application we write ${\rm ex}\, \overline{\rm co}^{w*}\, \mathcal{W}_{\Lambda_{\mathcal{A}}}$ to denote the set of extremal points of $\overline{\rm co}^{w*}\, \mathcal{W}_{\Lambda_\mathcal{A}}$. It is well-known, and follows directly from the relevant definitions, that ${\rm ex}\, \overline{\rm co}^{w*}\, \mathcal{W}_{\Lambda_{\mathcal{A}}} \subseteq {\rm ex}\, \mathcal{W}_{\Lambda_{\mathcal{A}}}$. We also write $\#A$ for the \emph{cardinality} of a set $A$. 

\begin{theorem} \label{thm:extremals}
In Theorems~\ref{thm:vanishing_mass},~\ref{thm:vanishing_mass2}, the decomposition can be refined to
\[
  \mu=\int_{{\rm ex}\, \overline{\rm co}^{w*}\, \mathcal{W}_{\Lambda_{\mathcal{A}}}}\nu\de\pi(\nu),
\]
that is, $\pi\in\mathcal{M}^1({\rm ex}\, \overline{\rm co}^{w*}\, \mathcal{W}_{\Lambda_{\mathcal{A}}}) \subseteq \mathcal{M}^1({\rm ex}\, \mathcal{W}_{\Lambda_{\mathcal{A}}})$. Moreover, if $\nu\in{\rm ex}\, \mathcal{W}_{\Lambda_{\mathcal{A}}}$ with $[\nu]\in{\rm ker}\, \mathbb A(\xi) \subseteq \Lambda_{\mathcal{A}}$ for some $\xi\neq 0$, then ${\rm supp}\, \nu$ consists of isolated points and it holds that $\#{\rm supp}\, \nu\leq{\rm rank}\, \mathbb A(\xi)+1$. In particular, if $[\nu] = 0$, then $\#{\rm supp}\, \nu\leq \min_{\xi\neq 0}{\rm rank}\mathbb A(\xi)+1$.
\end{theorem}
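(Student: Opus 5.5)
We split the argument into two parts: first refine the decomposition to extremal points, then bound the support of extremal measures.

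\emph{Refining to extremal points.} The set $K:=\overline{\rm co}^{w*}\,\mathcal{W}_{\Lambda_{\mathcal{A}}}$ is a weak* compact convex subset of $\mathcal{M}^1(\mathbb S_V)$, since $\mathbb S_V$ is compact. This space is metrizable in the weak* topology. By Theorem~\ref{thm:vanishing_mass}, $\mu\in K$, so Choquet's theorem (metrizable version) gives a probability measure $\pi$ on $K$ with barycenter $\mu$ and $\pi({\rm ex}\,K)=1$; here ${\rm ex}\,K$ is a $G_\delta$, hence Borel. The inclusion ${\rm ex}\,K\subseteq \mathcal{W}_{\Lambda_{\mathcal{A}}}$ is already quoted, so $\pi$ is concentrated on ${\rm ex}\,\mathcal{W}_{\Lambda_{\mathcal{A}}}$.

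For Theorem~\ref{thm:vanishing_mass2} we additionally need the $\nu$'s to lie in $\mathcal{Y}_{\mathcal{A}}$. This is automatic in that setting, since $K=\mathcal{Y}_{\mathcal{A}}$ there.

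\emph{Bounding the support of extremals.} Fix $\nu\in{\rm ex}\,\mathcal{W}_{\Lambda_{\mathcal{A}}}$ with $b:=[\nu]\in\ker\mathbb A(\xi)$, and set $r:={\rm rank}\,\mathbb A(\xi)$. The key observation is the following. For every Borel $g\colon\mathbb S_V\to[-1,1]$ with $\int g\,d\nu=0$ and
\[
\mathbb A(\xi)\int \lambda\, g(\lambda)\,d\nu(\lambda)=0,
\]
the measures $\nu_\pm:=(1\pm g)\nu$ are probability measures. Their barycenters are $b\pm\int\lambda g\,d\nu$, and both lie in $\ker\mathbb A(\xi)\subseteq\Lambda_{\mathcal{A}}$. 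Hence $\nu_\pm\in\mathcal{W}_{\Lambda_{\mathcal{A}}}$ and $\nu=\tfrac12(\nu_++\nu_-)$. Extremality then forces $g=0$ $\nu$-a.e.

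Now suppose ${\rm supp}\,\nu$ contained $r+2$ distinct points, or accumulated anywhere. In either case we could find $r+2$ disjoint Borel sets $E_1,\dots,E_{r+2}$ of positive $\nu$-measure. Take $g=\sum_i c_i\ONE_{E_i}$ with $c\in\R^{r+2}$. The conditions on $g$ are $1+r$ linear equations in $r+2$ unknowns: the first is $\sum_i c_i\nu(E_i)=0$, and the remaining ones say that $\sum_i c_i\int_{E_i}\lambda\,d\nu$ lies in $\ker \mathbb A(\xi)$, a space of codimension $r$. So a nonzero solution exists, and after scaling $|c_i|\le1$. This contradicts extremality. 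Therefore $\nu$ has at most $r+1$ atoms and no diffuse part, so its support is a finite set of isolated points with $\#{\rm supp}\,\nu\le r+1$.

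\emph{The case $[\nu]=0$.} If $[\nu]=0$, then $[\nu]\in\ker\mathbb A(\xi)$ for every $\xi\ne0$. We may therefore choose $\xi$ minimizing the rank, which gives the final bound.

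\emph{Main obstacle.} The main obstacle is measure-theoretic care rather than ideas. We must check the Borel measurability and metrizability needed for Choquet's theorem, and in the second part we must argue that non-atomic mass or infinitely many points always yield $r+2$ disjoint positive-mass sets.
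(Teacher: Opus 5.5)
Your proposal is correct and follows essentially the same route as the paper. The refinement comes from the full (metrizable) Choquet theorem applied to $\overline{\rm co}^{w*}\,\mathcal{W}_{\Lambda_{\mathcal{A}}}$. The support bound comes from the perturbation $\nu_\pm=(1\pm g)\nu$, with $g=\sum_i c_i\ONE_{E_i}$ built on $r+2$ disjoint sets of positive $\nu$-measure and the coefficients $c_i$ given by a nonzero solution of an underdetermined linear system. You write the barycenter constraint as $\mathbb A(\xi)\int\lambda g\,d\nu=0$, while the paper uses the projection onto $L^\perp$ with $L=\ker\mathbb A(\xi)$; these are equivalent.
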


We note that another application of the above results to the anisotropic Michael--Simon inequality has appeared very recently in~\cite{FT26}.

The paper is organized as follows: In Section~\ref{sec:convex_analysis} we will prove some general convexity results for concentration Young measures and in Section~\ref{sec:main_results} we introduce other tools and techniques, most notably various estimates on the heat flow. The main theorems and their applications are established in Section~\ref{sec:proofs}. Finally, in an Appendix we present some relevant examples of concentrations, which we hope will illustrate and motivate Conjecture~\ref{conj:vmc}. 

\begin{ackn}
This work was supported by UK Research and Innovation (UKRI) under the Horizon Europe funding guarantee [grant number EP/Z000297/1]. The authors would like to thank Giovanni Alberti, Adolfo Arroyo-Rabasa, Jean-Fran\c{o}is Babadjian, David Bate, Guido De~Philippis, Jan Kristensen, and Andrea Merlo for discussions related to the topic of this paper. The authors are particularly grateful to Guido De~Philippis for pointing out how to obtain higher integrability in Theorem~\ref{thm:cone}, and to Jan Kristensen for suggesting to remove a previous assumption of positive $1$-homogeneity in Theorem~\ref{thm:compensated_compactness} and Corollary~\ref{cor:Morrey_conj}, both during discussions at MFO. AI was used in the preparation of this paper to discuss a few minor ideas and estimates, but not in the writing of the paper besides some help with formulations.
\end{ackn}

\section{Convexity and barrier functions}
\label{sec:convex_analysis}

We start with the following lemma, which estimates Young measure representations for sums of concentrating sequences. Here, for a metric space $Z$ we denote by ${\rm Lip}(Z)$ the set of all Lipschitz functions $f : Z\to\R$. 

\begin{lemma}
\label{lem: almost additivity}
Let $Q\subseteq\R^d$ be open and let $\psi\in{\rm Lip}(\mathbb S_V)$. For any $W\in {\rm L}^1(Q;V)$ set
\begin{equation*}
   F_{\psi}(W):=\int_{\mathbb S_V}\psi\de\Theta_W,
\end{equation*}
where $\Theta_W$ is defined in~\eqref{eq:Theta}. Then, for all $U,V\in {\rm L}^1(Q;V)$,
\begin{equation}\label{eq:almost_additive}
  \bigl|F_{\psi}(U+V)-F_{\psi}(U)-F_{\psi}(V)\bigr|
  \leq C_\psi \int_{Q}\min\{|U(x)|,|V(x)|\}\de x,
\end{equation}
where $C_\psi :=6\|\psi\|_{\infty}+2\,\mathrm{Lip}(\psi)$.

\end{lemma}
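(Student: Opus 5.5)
The plan is to rewrite $F_\psi$ as an integral over $Q$. Extend $\psi$ to $V$ by positive $1$-homogeneity, $\tilde\psi(v):=|v|\,\psi(v/|v|)$ with $\tilde\psi(0):=0$. By definition of the push-forward, $F_\psi(W)=\int_Q\tilde\psi(W(x))\de x$. The left-hand side of~\eqref{eq:almost_additive} is then bounded by $\int_Q|\tilde\psi(U+V)-\tilde\psi(U)-\tilde\psi(V)|\de x$. It therefore suffices to prove the pointwise inequality
\[
|\tilde\psi(a+b)-\tilde\psi(a)-\tilde\psi(b)|\le C_\psi\min\{|a|,|b|\}\qquad\text{for all }a,b\in V.
\]

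For the pointwise estimate, by symmetry assume $|b|\le|a|$, so the goal is a bound by $C_\psi|b|$. We have $|\tilde\psi(b)|\le\|\psi\|_\infty|b|$, so it remains to show $|\tilde\psi(a+b)-\tilde\psi(a)|\le(5\|\psi\|_\infty+2\,\mathrm{Lip}(\psi))|b|$, or anything smaller. The argument splits into two cases.

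\emph{Case $|b|\ge|a|/2$.} Then $|a|\le 2|b|$ and $|a+b|\le 3|b|$. The crude bound gives $|\tilde\psi(a+b)|+|\tilde\psi(a)|\le\|\psi\|_\infty(3|b|+2|b|)=5\|\psi\|_\infty|b|$, which is enough.

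\emph{Case $|b|<|a|/2$.} Then $a+b\neq0$ and $|a+b|\ge|a|/2$. Write
\[
\tilde\psi(a+b)-\tilde\psi(a)=(|a+b|-|a|)\,\psi\Big(\tfrac{a+b}{|a+b|}\Big)+|a|\Big(\psi\Big(\tfrac{a+b}{|a+b|}\Big)-\psi\Big(\tfrac{a}{|a|}\Big)\Big).
\]
The first term is at most $\|\psi\|_\infty|b|$. For the second term we use the elementary inequality $\big|\tfrac{x}{|x|}-\tfrac{y}{|y|}\big|\le\tfrac{2|x-y|}{\max\{|x|,|y|\}}$. This gives $\big|\tfrac{a+b}{|a+b|}-\tfrac{a}{|a|}\big|\le 2|b|/|a|$, so the second term is bounded by $2\,\mathrm{Lip}(\psi)|b|$.

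Adding up, in both cases the total is at most $(6\|\psi\|_\infty+2\,\mathrm{Lip}(\psi))|b|$. Integrating the pointwise inequality over $Q$ then yields~\eqref{eq:almost_additive}.

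The only delicate points are the following:
\begin{itemize}
\item Handling the sets $\{U=0\}$, $\{V=0\}$, $\{U+V=0\}$, where the direction is arbitrary. These contribute nothing, because the weight $|W|$ vanishes there and $\tilde\psi(0)=0$.
\item Verifying the normalized-vector inequality. Assuming $|x|\ge|y|$, it follows from $\tfrac{x}{|x|}-\tfrac{y}{|y|}=\tfrac{x-y}{|x|}+y\big(\tfrac1{|x|}-\tfrac1{|y|}\big)$, since the second term has norm $\tfrac{|x|-|y|}{|x|}\le\tfrac{|x-y|}{|x|}$.
\end{itemize}
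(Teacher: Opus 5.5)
Your proof is correct and follows essentially the same route as the paper. You reduce to the pointwise estimate for $a,b\in V$ (via the $1$-homogeneous extension of $\psi$), then split into two cases. When $|a|\ge 2|b|$ you use the same decomposition $(|a+b|-|a|)\psi(\cdot)+|a|\bigl(\psi(\cdot)-\psi(\cdot)\bigr)$ and the bound $2|b|/|a|$ on the difference of directions; when $|a|<2|b|$ you use the crude bound $6\|\psi\|_\infty|b|$. The only cosmetic difference is that you obtain the direction estimate from a general inequality for normalized vectors, whereas the paper does the triangle-inequality computation directly.
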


\begin{proof}
We claim that for all $a,b\in V$ we have
\begin{equation} \label{eq:almost_additivity_claim}
\biggl||a+b|\psi\bigg(\frac{a+b}{|a+b|}\bigg)-|a|\psi\bigg(\frac{a}{|a|}\bigg)-|b|\psi\bigg(\frac{b}{|b|}\bigg)\biggr|
\leq C_\psi \,\min\{|a|,|b|\},
\end{equation}
where $C_\psi >0$ is as in the statement of the lemma. Applying this pointwise inequality with $a:=U(x)$ and $b:=V(x)$ before integrating over $Q$ gives~\eqref{eq:almost_additive}.

To prove the claim~\eqref{eq:almost_additivity_claim}, fix $a,b\in V$. If $a=0$ or $b=0$ then the claim is trivial; otherwise, assume $|a|\geq |b|$, so that $\min\{|a|,|b|\}=|b|$.

\medskip\noindent
\emph{Case 1: $|a|\geq 2|b|$.}\;
Then, $|a+b|\geq |a|-|b|\geq |a|/2$. Moreover,
\begin{equation*}
\left|\frac{a+b}{|a+b|}-\frac{a}{|a|}\right|
\leq \left|\frac{a+b}{|a+b|}-\frac{a+b}{|a|}\right|
    +\left|\frac{a+b}{|a|}-\frac{a}{|a|}\right|
=\frac{||a|-|a+b||}{|a|}+\frac{|b|}{|a|}
\leq \frac{2|b|}{|a|}.
\end{equation*}
Hence, with
\begin{equation*}
  \widehat{v}:=\frac{v}{|v|},  \qquad v \in V \setminus \{0\}, 
\end{equation*}
we have
\begin{equation*}
\bigl|\psi(\widehat{a+b})-\psi(\widehat a)\bigr|
\leq \mathrm{Lip}(\psi)\,\frac{2|b|}{|a|},
\end{equation*}
where $\mathrm{Lip}(\psi)$ denotes the Lipschitz constant of $\psi$. Using $||a+b|-|a||\leq |b|$ and the identity
\begin{equation*}
|a+b|\psi(\widehat{a+b})-|a|\psi(\widehat a)
=(|a+b|-|a|)\psi(\widehat{a+b})+|a|\bigl(\psi(\widehat{a+b})-\psi(\widehat a)\bigr),
\end{equation*}
it follows that
\begin{equation*}
\bigl||a+b|\psi(\widehat{a+b})-|a|\psi(\widehat a)\bigr|
\leq \|\psi\|_{\infty}|b|+2\,\mathrm{Lip}(\psi)\,|b|.
\end{equation*}
Therefore,
\begin{equation*}
\bigl||a+b|\psi(\widehat{a+b})-|a|\psi(\widehat a)-|b|\psi(\widehat b)\bigr|
\leq (2\|\psi\|_{\infty}+2\,\mathrm{Lip}(\psi))\,|b|.
\end{equation*}

\medskip\noindent
\emph{Case 2: $|a|<2|b|$.}\;
Then, $|a+b|\leq |a|+|b|<3|b|$, whereby
\begin{equation*}
\bigl||a+b|\psi(\widehat{a+b})-|a|\psi(\widehat a)-|b|\psi(\widehat b)\bigr|
\leq \|\psi\|_{\infty}\,(|a+b|+|a|+|b|)
\leq 6\|\psi\|_{\infty}|b|,
\end{equation*}
where if $a=-b$ we interpret $|a+b|\psi(\widehat{a+b})$ as zero.
Combining the two cases yields our claim~\eqref{eq:almost_additivity_claim}.
\end{proof}

As a consequence we then obtain the following convexity result:

\begin{proposition}
\label{prop:Young_convexity}
The set $\mathcal{Y}_\mathcal{A}$ is convex.
\end{proposition}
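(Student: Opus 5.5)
Given $\mu_1,\mu_2\in\mathcal{Y}_\mathcal{A}$ and $\theta\in(0,1)$, we aim to produce a sequence generating $\theta\mu_1+(1-\theta)\mu_2$. Let $(u_j)_j$ and $(v_j)_j$ be generating sequences for $\mu_1$ and $\mu_2$ as in Definition~\ref{def:pure_concentrations}. The natural candidate is
\[
  w_j:=\theta u_j+(1-\theta)\tilde v_j,
\]
where $\tilde v_j$ is a modification of $v_j$ whose mass is essentially disjoint from that of $u_j$. Linearity gives $\mathcal{A}w_j=0$. Since $\Theta_{cX}=c\,\Theta_X$ for $c>0$, Lemma~\ref{lem: almost additivity} yields
\[
  F_\psi(w_j)=\theta F_\psi(u_j)+(1-\theta)F_\psi(\tilde v_j)+O\Big(\int_Q\min\{|u_j|,|\tilde v_j|\}\de x\Big)
\]
for every $\psi\in{\rm Lip}(\mathbb S_V)$. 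Lipschitz functions are dense in ${\rm C}(\mathbb S_V)$, so the whole task reduces to arranging
\[
  \int_Q\min\{|u_j|,|\tilde v_j|\}\to0,\qquad \|\tilde v_j\|_{{\rm L}^1}=1,\qquad \Theta_{\tilde v_j}\toweakstar\mu_2 .
\]
The normalization $\|w_j\|_{{\rm L}^1}=1$ then holds up to an error $o(1)$, since $\big||a+b|-|a|-|b|\big|\le2\min\{|a|,|b|\}$. We fix it by dividing by $\|w_j\|_{{\rm L}^1}\to1$, which changes neither the limit $\Theta$ nor convergence in measure.

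The main obstacle is the decoupling. I would try a diagonal argument that keeps $\tilde v_j:=v_{k(j)}$ and chooses $k(j)$ large. Fix $j$ and $\delta>0$. Then
\[
  \int\min\{|u_j|,|v_k|\}\le \delta\|u_j\|_{{\rm L}^1}+\int_{\{|v_k|\le\delta^{-1}|u_j|\}\cap\{|u_j|>\delta\}}|v_k|\,\de x .
\]
Splitting $\{|u_j|>\delta\}$ further into $\{|u_j|\le M\}$ and $\{|u_j|>M\}$, with $M$ chosen so that $u_j$ is integrable enough there, the remaining term is bounded by
\[
  \int_{\{|u_j|>M\}}|u_j|\,\de x+\frac{M}{\delta}\,\bigl|\{|v_k|>\delta\}\bigr| .
\]
Here the first summand is small because the single function $u_j$ is integrable, and the second tends to $0$ as $k\to\infty$ because $v_k\to0$ in measure. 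Since each fixed $u_j\in{\rm L}^1$ is equi-integrable by itself, we can choose $\delta_j\downarrow0$ and then $k(j)$ so that $\int\min\{|u_j|,|v_{k(j)}|\}\le1/j$.

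A subsequence of $(v_k)$ still generates $\mu_2$, still converges to zero in measure, and keeps its weak* limit $v$. Hence $w_j\to0$ in measure, since it is a sum of two sequences tending to $0$ in measure. The weak* limit of $w_j$ is $\theta u+(1-\theta)v$, which satisfies $|\cdot|(\partial Q)\le\theta|u|(\partial Q)+(1-\theta)|v|(\partial Q)=0$. Smoothness is preserved.

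If the boundary condition or the smoothness requirement caused trouble I would revisit them, but the only genuinely delicate point is the almost-disjointness estimate above. That estimate rests on the elementary fact that a fixed integrable function and a sequence tending to zero in measure have asymptotically negligible overlap of mass.
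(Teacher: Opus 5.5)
Your proposal is correct and is essentially the paper's proof: choose $k(j)$ by a diagonal argument so that $\int_Q\min\{|u_j|,|v_{k(j)}|\}\de x\le 1/j$, then combine the almost-additivity lemma with a renormalization (your check that $\|w_j\|_{{\rm L}^1}\to1$ spells out a point the paper leaves implicit). The paper obtains the vanishing overlap for fixed $j$ in one line by dominated convergence, since $\min\{|u_j|,|v_k|\}\le|u_j|\in{\rm L}^1$ and $\min\{|u_j|,|v_k|\}\to0$ in measure as $k\to\infty$. This would spare you the $\delta$/$M$ splitting, whose displayed bounds are slightly loose: some $O(\delta)$ contributions are missing, which is harmless.
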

\begin{proof}
Let $\mu_1,\mu_2\in\mathcal{Y}_\mathcal{A}$ be generated in the sense of Definition~\ref{def:pure_concentrations} by two sequences $(u^1_j)_j$ and $(u^2_j)_j$, whose weak* limits we denote by $u^1$ and $u^2$, respectively. We need to prove that then also $t\mu_1+(1-t)\mu_2\in\mathcal{Y}_\mathcal{A}$.

Observe first that $\min\{|u_j^1|,|u_n^2|\}$ converges to zero in ${\rm L}^1(Q)$ as $n\to\infty$, for all fixed $j\in\N$. Indeed, $\min\{|u_j^1|,|u_n^2|\}\leq|u^1_j|$ and  we also have convergence to zero in measure. Hence, as $n\to\infty$, we can apply the dominated convergence theorem (or Vitali's convergence theorem) to see that there is a subsequence $n_j$ such that
\begin{equation}
\label{eq:overlap_to_zero}
    \int_{Q}\min\{|u^1_j|(x),|u^2_{n_j}|(x)\}\de x\leq\frac{1}{j},\qquad j\in\N.
\end{equation}
Set
\[
  w_j := \beta_{j,n_j} (tu^1_j+(1-t)u^2_{n_j})
\]
with $\beta_{j,n_j} > 0$ chosen so that $\|w_j\|_{{\rm L}^1}=1$ for all $j$. Clearly, $w_j\in {\rm C}^\infty(Q;V)$, $\mathcal{A}w_j=0$, $w_j \to 0$ in measure, and $w_j\toweakstar tu^1+(1-t)u^2$.

Now, given $\Psi\in{\rm Lip}(\mathbb S_V)$, we can apply~\eqref{eq:almost_additive}, which together with~\eqref{eq:overlap_to_zero} gives that $(w_j)_j$ is a generating sequence (in the sense of Definition~\ref{def:pure_concentrations}) for $t\mu_1+(1-t)\mu_2$. This proves the proposition because the weak* convergence of probability measures on a compact metric space can be characterized by testing only with functions in the Lipschitz class by density.
\end{proof}

Let $\Gamma$ be a closed (possibly non-convex) cone in $V$ that contains the origin. Our next result is a duality characterization of the weakly* closed convex hull of the set
\begin{equation*}
  \mathcal{W}_{\Gamma}:=\big\{\mu\in\mathcal{M}^1(\mathbb S_V) \;:\; [\mu]\in\Gamma\big\}
\end{equation*}
via Jensen-type inequalities for all the convex, positively $1$-homogeneous functions $q:V\to\R$ with the property that $q\geq 0$ on $\Gamma$. In the following, we refer to these $q$ as \emph{barrier functions} associated to the cone $\Gamma$.

\begin{proposition}
\label{prop:convex_dicomoty}
Let $\Gamma$ be a closed cone in $V$ that contains the origin. For all $\mu\in\mathcal{M}^1(\mathbb S_V)$,
\begin{equation}
    \label{eq:WGamma_equivalence}
\mu\in\overline{\rm co}^{w*}\, \mathcal{W}_{\Gamma}\iff\int_{\mathbb S_V}q\de\mu\geq 0 \text{ for all barrier functions associated to the cone $\Gamma$.}
\end{equation}
\end{proposition}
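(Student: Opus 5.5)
The plan is a Hahn--Banach duality argument in $\mathcal{M}(\mathbb S_V)={\rm C}(\mathbb S_V)^*$ with the weak* topology. The ``$\Rightarrow$'' direction uses Jensen's inequality. For ``$\Leftarrow$'', I would convert a separating continuous function into a barrier function by taking a convex, positively $1$-homogeneous envelope over measures with prescribed barycenter.

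\emph{Necessity.} Let $q$ be a barrier function and $\nu\in\mathcal{W}_\Gamma$. Since $q$ is convex and $\nu$ is a probability measure, Jensen's inequality gives $\int q\de\nu\geq q([\nu])$. Since $[\nu]\in\Gamma$, we have $q([\nu])\geq 0$. The map $\nu\mapsto\int_{\mathbb S_V} q\de\nu$ is affine, and it is weak* continuous because $q$ is a finite convex function on $V$ and hence continuous. So the inequality $\int q\de\nu\geq0$ passes to convex combinations and then to weak* limits, which gives it on all of $\overline{\rm co}^{w*}\,\mathcal{W}_\Gamma$.

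\emph{Sufficiency.} Let $K:=\overline{\rm co}^{w*}\,\mathcal{W}_\Gamma$. It is convex and weak* compact, being a closed subset of $\mathcal{M}^1(\mathbb S_V)$. Suppose $\mu\notin K$. The weak* dual of $\mathcal{M}(\mathbb S_V)$ is ${\rm C}(\mathbb S_V)$, so the Hahn--Banach separation theorem yields $f\in{\rm C}(\mathbb S_V)$ and $c\in\R$ with $\int f\de\mu<c\leq\int f\de\nu$ for all $\nu\in K$. All measures involved are probabilities, so after replacing $f$ by $f-c$ we may assume
\[
\int f\de\mu<0\leq\int f\de\nu\qquad\text{for all }\nu\in\mathcal{W}_\Gamma.
\]
For $v\in V$ define
\[
q(v):=\inf\Bigl\{\int_{\mathbb S_V} f\de\sigma \;:\; \sigma\in\mathcal{M}^+(\mathbb S_V),\ \int\lambda\de\sigma(\lambda)=v\Bigr\}\in[-\infty,\infty).
\]
The admissible $\sigma$ form a convex cone, and their barycenters depend linearly on $\sigma$. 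Hence $q$ is convex and positively $1$-homogeneous, with $q(0)\leq 0$ coming from $\sigma=0$. Taking $\sigma=|v|\delta_{v/|v|}$ gives $q(v)\leq f(v/|v|)\,|v|<\infty$, and in particular $q\leq f$ on $\mathbb S_V$.

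\emph{Main obstacle: finiteness of $q$.} Large-mass measures might drive the infimum to $-\infty$. This is where $0\in\Gamma$ is used. If $\sigma\neq0$ has barycenter $0$, then $\sigma/\sigma(\mathbb S_V)\in\mathcal{W}_\Gamma$, so $\int f\de\sigma\geq0$; therefore $q(0)=0$. Now suppose $q(v)=-\infty$ for some $v$. Convexity would give $0=q(0)\leq\tfrac12\bigl(q(v)+q(-v)\bigr)=-\infty$, since $q(-v)<\infty$. This is a contradiction, so $q$ is a real-valued convex function and hence continuous.

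\emph{Barrier property and conclusion.} Let $v\in\Gamma\setminus\{0\}$ and let $\sigma$ be admissible for $v$; then $\sigma\neq0$. The normalized measure $\sigma/\sigma(\mathbb S_V)$ is a probability measure with barycenter $v/\sigma(\mathbb S_V)\in\Gamma$, because $\Gamma$ is a cone. So it lies in $\mathcal{W}_\Gamma$, and $\int f\de\sigma\geq0$. Thus $q\geq0$ on $\Gamma$, and $q$ is a barrier function. Finally,
\[
\int q\de\mu\leq\int f\de\mu<0,
\]
which contradicts the hypothesis and proves ``$\Leftarrow$''.
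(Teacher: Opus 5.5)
Your proposal is correct and follows the paper's proof essentially step for step: Hahn--Banach separation, then the convex, positively $1$-homogeneous envelope of the separating function as the barrier function, with finiteness coming from $0\in\Gamma$ and non-negativity on $\Gamma$ from the cone property. The differences are cosmetic. You take the infimum over all non-negative measures with prescribed unnormalized barycenter rather than over finite atomic combinations, and your finiteness argument via $q(0)=0$ and $q(0)\leq\tfrac12\bigl(q(v)+q(-v)\bigr)$ with $q(-v)<\infty$ is the paper's trick of appending the atom $|x|\,\delta_{-x/|x|}$ to obtain a measure with barycenter $0$.
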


\begin{remark}
Note that while $\mu\in\overline{\rm co}^{w*}\, \mathcal{W}_{\Gamma}$ must be a probability measure, one cannot conclude that $[\mu] \in \Gamma$ since $\Gamma$ is not convex.
\end{remark}

\begin{proof}
\noindent\emph{``$\Longrightarrow$'':}
We first suppose that $\mu\in \mathcal{W}_{\Gamma}$. Let $q$ be a barrier function for $\Gamma$. By Jensen's inequality we get
\begin{equation*}
    \int_{\mathbb S_V}q\de\mu\geq q([\mu])\geq 0.
\end{equation*}
Taking convex combinations of measures in $\mathcal{W}_{\Gamma}$ and their weak* limits clearly preserves this inequality, so the first implication follows.

\medskip
\noindent\emph{``$\Longleftarrow$'':}
Assume by contradiction that the right-hand condition in~\eqref{eq:WGamma_equivalence} holds, but that $\mu\notin\overline{\rm co}^{w*}\, \mathcal{W}_{\Gamma}$. Since $\overline{\rm co}^{w*}\, \mathcal{W}_{\Gamma}$ is a convex, weakly* compact subset of $\mathcal{M}(\mathbb S_V)$, by the Hahn--Banach theorem (see~\cite[Theorem 3.4(b)]{Rudin91} for a version that is applicable here) we can find $h\in {\rm C}(\mathbb S_V)$ and $c\in\R$ such that 
\begin{equation*}
    \int_{\mathbb S_V}h\de\mu<c\leq\int_{\mathbb S_V}h\de\sigma  \qquad
    \text{for all $\sigma\in\overline{\rm co}^{w*}\, \mathcal{W}_{\Gamma}$.}
\end{equation*}
Replacing $h$ with $h-c\in {\rm C}(\mathbb S_V)$ gives
\begin{equation} \label{eq:HB_assumption}
    \int_{\mathbb S_V}h\de\mu<0\leq\int_{\mathbb S_V}h\de\sigma  \qquad
    \text{for all $\sigma\in\overline{\rm co}^{w*}\, \mathcal{W}_{\Gamma}$.}
\end{equation}
We shall refer to~\eqref{eq:HB_assumption} as the ``Hahn--Banach assumption''.

Now we define
\begin{equation*}
    q_h(x):=\inf\bigg\{\sum_{i=1}^Nt_ih(v_i) \;:\; N\in\N,\; t_i\geq0,\; v_i\in\mathbb S_V,\; \sum_{i=1}^Nt_iv_i=x\bigg\},  \qquad x\in V.
\end{equation*}
It is immediate to see that $q_h(x)<+\infty$ for all $x\neq 0$ by choosing $N:=1$, $t_1 := |x|$ and $v_1 := x/|x|$. We can also prove that $q_h$ is never equal to $-\infty$: Let $(t_i,v_i)_{i=1}^N$ be as in the definition of $q_h$ and set $a:=\sum_{i=1}^Nt_ih(v_i)$. By appending the pair $(t_{N+1},v_{N+1}):=(|x|,-x/|x|)$ one observes that $\sum_{i=1}^{N+1}t_iv_i=0$ and hence
\[
  \sum_{i=1}^{N+1}t_ih(v_i) = a+|x|h(-x/|x|).
\]
Moreover, the measure
\begin{equation*}
    \nu:=\frac{\sum_{i=1}^{N+1}t_i\delta_{v_i}}{\sum_{i=1}^{N+1}t_i}\in\mathcal{M}^1(\mathbb S_V)
\end{equation*}
belongs to $\mathcal{W}_{\Gamma}$ since $[\nu]=0$. Therefore, thanks to the Hahn--Banach assumption~\eqref{eq:HB_assumption},
\begin{equation*}
    0\leq\int_{\mathbb S_V}h\de\nu = \frac{a+|x|h(-x/|x|)}{\sum_{i=1}^{N+1}t_i},
\end{equation*}
whence $a\geq-|x|h(-x/|x|)$. Minimizing over all possible choices, the latter implies
\[
  q_h(x)\geq-|x|h(-x/|x|) > -\infty.
\]
Furthermore, the convexity and positive one-homogeneity of $q_h$ can be easily checked to hold.

Evidently, $q_h \leq h$, first on $\mathbb S_V$ and then on all of $V$ by positive $1$-homogeneity. Conversely, if $x\in\Gamma$ then $q_h(x)\geq 0$ thanks to the Hahn--Banach assumption. Indeed, consider any decomposition $(t_i,v_i)_{i=1}^N$ with $\sum_{i=1}^Nt_iv_i=x$ and set $T:=\sum_{i=1}^Nt_i$. For the probability measure
\begin{equation*}
    \sigma:=\frac{1}{T}\sum_{i=1}^Nt_i\delta_{v_i}\in\mathcal{M}^1(\mathbb S_V)
\end{equation*}
we have that $[\sigma]=x/T \in\Gamma$ (since $\Gamma$ is a cone). Via the Hahn--Banach assumption~\eqref{eq:HB_assumption},
\begin{equation*}
    0\leq T\int_{\mathbb S_V}h\de\sigma=\sum_{i=1}^Nt_ih(v_i),
\end{equation*}
whence $q_h(x)\geq 0$ follows. In conclusion, we have shown that $q_h$ is a barrier function associated to the cone $\Gamma$.

Therefore, applying the right-hand condition in~\eqref{eq:WGamma_equivalence} together with the strict inequality in the Hahn--Banach assumption~\eqref{eq:HB_assumption}, we get
\begin{equation*}
    0\leq \int_{\mathbb S_V}q_h\de\mu\leq\int_{\mathbb S_V}h\de\mu<0,
\end{equation*}
which is a contradiction.
\end{proof}

The following is a decomposition result which follows from Choquet's theorem (see, e.g.,~\cite{P01}), giving a representation of the measures belonging to $\overline{\rm co}^{w*}\, \mathcal{W}_{\Gamma}$. Since its proof is relatively short, we include it for the sake of exposition.

\begin{proposition}
\label{prop:Choquet_repr}
Let $\Gamma$ be a closed cone in $V$ that contains the origin. Then, the set $\mathcal{W}_{\Gamma}$ is weakly* compact and for every $\mu\in\overline{\rm co}^{w*}\, \mathcal{W}_{\Gamma}$ there exists a probability measure $\pi\in\mathcal{M}^1(\mathcal{W}_{\Gamma})$ such that 
\begin{equation}
    \label{eq:Choquet_repr}
    \mu = \int_{\mathcal{W}_{\Gamma}}\sigma\de\pi(\sigma).
\end{equation}
\end{proposition}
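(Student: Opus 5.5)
The plan has two parts: first show that $\mathcal{W}_{\Gamma}$ is weakly* compact, then represent elements of its closed convex hull as barycenters of probability measures on $\mathcal{W}_{\Gamma}$.

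\emph{Compactness.} Since $\mathbb S_V$ is a compact metric space, $\mathcal{M}^1(\mathbb S_V)$ is weakly* compact and metrizable by Banach--Alaoglu and Prokhorov. It therefore suffices to show that $\mathcal{W}_{\Gamma}$ is weakly* closed in it.

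\begin{itemize}
\item The barycenter map $\mu\mapsto[\mu]=\int\lambda\de\mu(\lambda)$ is weakly* continuous, because each coordinate of the identity $\lambda\mapsto\lambda$ is continuous on $\mathbb S_V$.
\item Hence $\mathcal{W}_{\Gamma}$ is the preimage of the closed set $\Gamma$ under a continuous map, and so it is closed.
\end{itemize}

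\emph{Representation.} Fix $\mu\in\overline{\rm co}^{w*}\,\mathcal{W}_{\Gamma}$. By metrizability there are $\mu_n\in{\rm co}\,\mathcal{W}_{\Gamma}$ with $\mu_n\toweakstar\mu$. Write each as a finite convex combination
\[
\mu_n=\sum_{i} t_i^n\sigma_i^n, \qquad \sigma_i^n\in\mathcal{W}_{\Gamma},
\]
and set $\pi_n:=\sum_i t_i^n\delta_{\sigma_i^n}\in\mathcal{M}^1(\mathcal{W}_{\Gamma})$. Since $\mathcal{W}_{\Gamma}$ is a compact metrizable space, $\mathcal{M}^1(\mathcal{W}_{\Gamma})$ is again weakly* compact. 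So, up to a subsequence, $\pi_n\toweakstar\pi\in\mathcal{M}^1(\mathcal{W}_{\Gamma})$.

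To identify the limit, take $\varphi\in{\rm C}(\mathbb S_V)$. The evaluation map $E_\varphi(\sigma):=\int\varphi\de\sigma$ is continuous on $\mathcal{W}_{\Gamma}$ for the weak* topology. Therefore
\[
\int\varphi\de\mu_n=\int_{\mathcal{W}_{\Gamma}}E_\varphi\de\pi_n\;\longrightarrow\;\int_{\mathcal{W}_{\Gamma}}E_\varphi\de\pi .
\]
The left-hand side converges to $\int\varphi\de\mu$, which gives~\eqref{eq:Choquet_repr} in the weak sense, i.e.\ tested against every $\varphi\in{\rm C}(\mathbb S_V)$.

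The only delicate point is making sense of the integral $\int\sigma\de\pi(\sigma)$. I will define it weakly, as the measure $\varphi\mapsto\int E_\varphi\de\pi$. This is a positive, normalized linear functional on ${\rm C}(\mathbb S_V)$, so by Riesz it is a probability measure. No extremality or genuine Choquet machinery is needed here.
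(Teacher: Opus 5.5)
Your proposal is correct and follows essentially the same route as the paper: $\mathcal{W}_\Gamma$ is closed because the barycenter map is weakly* continuous, and compactness of $\mathcal{M}^1(\mathcal{W}_\Gamma)$ lets you pass from finite combinations $\sum_i t_i\delta_{\sigma_i}$ to the weak* closure. The only difference is presentational. The paper argues that the image of the compact convex set $\mathcal{M}^1(\mathcal{W}_\Gamma)$ under the continuous affine map $\pi\mapsto\int\sigma\de\pi(\sigma)$ is compact and convex, hence contains $\overline{\rm co}^{w*}\,\mathcal{W}_\Gamma$; you unpack this into an explicit subsequence extraction, which is equivalent.
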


\begin{proof}
The barycenter map $\sigma\mapsto[\sigma]$ is continuous on $\mathcal{M}^1(\mathbb S_V)$ with respect to the weak* convergence and $\Gamma$ is closed. Hence, $\mathcal{W}_{\Gamma}$ is a weakly* closed subset of $\mathcal{M}^1(\mathbb S_V)$ and thus it is itself weakly* compact. Consequently, also $\mathcal{M}^1(\mathcal{W}_{\Gamma})$ is weakly* closed, hence weakly* compact. Moreover, the map
\begin{equation*}
    \pi\in\mathcal{M}^1(\mathcal{W}_{\Gamma})\mapsto [\pi] = \int_{\mathcal{W}_{\Gamma}}\sigma\de\pi(\sigma)\in\mathcal{M}^1(\mathbb S_V)
\end{equation*}
is continuous with respect to the weak* topologies of the domain $\mathcal{M}^1(\mathcal{W}_{\Gamma})$ and of the target $\mathcal{M}^1(\mathbb S_V)$. This implies that its image is weakly* compact. Also, since $\mathcal{M}^1(\mathcal{W}_{\Gamma})$ is convex and $\pi \mapsto [\pi]$ is linear, the image of our map is convex. In particular, the image contains the set of all finite convex combinations of points in $\mathcal{W}_{\Gamma}$ and then also their weak* closure. This proves that $\overline{\rm co}^{w*}\, \mathcal{W}_{\Gamma}$ lies in the image of the map $\pi \mapsto [\pi]$, implying the claim.
\end{proof}

Finally, we have the following simple technical result, which is required since we are not in general able to prove that the set $\mathcal{Y}_\mathcal{A}$ is weakly* compact (due to potential boundary concentrations).

\begin{lemma}
\label{lem:hom_weak_star_closed}
    Let $\mathcal{Y}_\mathcal{A}^{\rm unif}\subseteq\mathcal{Y}_\mathcal{A}$ be the set of measures in Definition~\ref{def:pure_concentrations} whose generating sequences $(u_j)_j$ additionally satisfy $|u_j|\toweakstar\mathscr L^d \mres Q$. Then, $\mathcal{Y}_\mathcal{A}^{\rm unif}$ is convex and weakly* closed.
\end{lemma}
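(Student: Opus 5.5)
Convexity follows by repeating the proof of Proposition~\ref{prop:Young_convexity}. The only new point is that the combined sequence still has uniform mass distribution. Let $\mu_1,\mu_2\in\mathcal{Y}_\mathcal{A}^{\rm unif}$ be generated by $(u^1_j)_j$, $(u^2_j)_j$ with $|u^i_j|\toweakstar\mathscr L^d\mres Q$, and set $w_j:=\beta_{j,n_j}(tu^1_j+(1-t)u^2_{n_j})$ with $n_j$ chosen as in~\eqref{eq:overlap_to_zero}. The pointwise inequality
\[
\bigl||ta+(1-t)b|-t|a|-(1-t)|b|\bigr|\le 2\min\{|a|,|b|\}
\]
(the case $\psi\equiv 1$ of~\eqref{eq:almost_additivity_claim}) combined with~\eqref{eq:overlap_to_zero} gives
\[
\bigl\||tu^1_j+(1-t)u^2_{n_j}|-t|u^1_j|-(1-t)|u^2_{n_j}|\bigr\|_{{\rm L}^1}\to 0 .
\]
Hence $\beta_{j,n_j}\to 1$ and $|w_j|\toweakstar t\,\mathscr L^d\mres Q+(1-t)\,\mathscr L^d\mres Q=\mathscr L^d\mres Q$. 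The remaining properties, including $\Theta_{w_j}\toweakstar t\mu_1+(1-t)\mu_2$, follow exactly as in Proposition~\ref{prop:Young_convexity}. Moreover $|u|(\partial Q)=0$ holds automatically, because $|u|\le\mathscr L^d\mres\overline Q$ by lower semicontinuity of the total variation.

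For weak* closedness I would use a diagonal argument. Let $\mu_k\in\mathcal{Y}_\mathcal{A}^{\rm unif}$ with $\mu_k\toweakstar\mu$, and fix countable dense families $\{\psi_m\}\subseteq{\rm C}(\mathbb S_V)$ and $\{\varphi_m\}\subseteq{\rm C}(\overline Q)$. For each $k$, take a generating sequence $(u^k_j)_j$ of $\mu_k$ and choose an index $j(k)$ with the following properties:
\begin{enumerate}[(a)]
\item $\bigl|\int\psi_m\de\Theta_{u^k_{j(k)}}-\int\psi_m\de\mu_k\bigr|<1/k$ for all $m\le k$;
\item $\bigl|\int\varphi_m|u^k_{j(k)}|\de x-\int_Q\varphi_m\de x\bigr|<1/k$ for all $m\le k$;
\item $|\{|u^k_{j(k)}|>1/k\}|<1/k$.
\end{enumerate}
The diagonal sequence $v_k:=u^k_{j(k)}$ consists of smooth $\mathcal{A}$-free maps with $\|v_k\|_{{\rm L}^1}=1$, it converges to zero in measure, it satisfies $|v_k|\toweakstar\mathscr L^d\mres Q$, and $\Theta_{v_k}\toweakstar\mu$.

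Next I would check the weak* limit condition~(iv) of Definition~\ref{def:pure_concentrations}. Since the $v_k$ are bounded in $\mathcal M(\overline Q;V)$, after passing to a subsequence they converge weakly* to some $u$. Lower semicontinuity gives $|u|\le\mathscr L^d\mres\overline Q$, so $|u|(\partial Q)=0$ and condition~(iv) holds. Passing to this subsequence does not disturb the other limits.

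The main subtlety is precisely this point, and it explains why the uniform condition is imposed at all. Without $|u_j|\toweakstar\mathscr L^d\mres Q$, mass could drift to $\partial Q$ along the diagonal, and the requirement $|u|(\partial Q)=0$ would fail in the limit. The uniform control of the total-variation measures prevents this. It also guarantees tightness and rules out loss of mass, so that the limit $\mu$ is still a probability measure.
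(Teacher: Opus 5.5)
Your proposal is correct and follows essentially the same route as the paper: convexity by re-running the proof of Proposition~\ref{prop:Young_convexity}, and weak* closedness by a diagonal argument in which the uniform mass condition gives $|u|\le\mathscr L^d\mres\overline Q$ and hence $|u|(\partial Q)=0$; you also make explicit, via the case $\psi\equiv 1$ of \eqref{eq:almost_additivity_claim}, why the combined sequence keeps $|w_j|\toweakstar\mathscr L^d\mres Q$, which the paper leaves implicit. The only point you leave implicit, which the paper states, is that it suffices to treat sequences $\mu_k\toweakstar\mu$ because the weak* topology on $\mathcal{M}^1(\mathbb S_V)$ is metrizable (and your closing remark is slightly off: $\mu$ is a probability measure automatically by compactness of $\mathbb S_V$, not because of the uniform condition).
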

\begin{proof}
The convexity of $\mathcal{Y}_\mathcal{A}^{\rm unif}$ follows as in the proof of Proposition~\ref{prop:Young_convexity}. For the weak* closedness, let $\mu$ be in the weak* closure of $\mathcal{Y}_\mathcal{A}^{\rm unif}$. Then, there exists a sequence $(\mu_n)_n\subseteq \mathcal{Y}_\mathcal{A}^{\rm unif}$ such that $\mu_n\toweakstar\mu$. Indeed, for the dual of a separable Banach space, the weak* topology on norm-bounded sets is metrizable, see, e.g.,~\cite[Theorem 3.16]{Rudin91}; concretely, one could use the $1$-Wasserstein distance on $\mathbb S_V$. We also recall that the convergence in measure is metrizable via $\rho(f,g) := \int_{\mathbb S_V} \min\{|f-g|,1\} \de H$ for measurable $f,g : \mathbb S_V \to \R$ (where $H$ is the canonical measure on $\mathbb S_V$). Consequently, by a diagonalization procedure, we can find a sequence $(u_n)_n \subseteq {\rm C}^\infty(Q;V)$ such that $(u_n)_n$ generates $\mu$ in the sense of Definition~\ref{def:pure_concentrations}. In this context note that condition~(iv) is preserved due to our additional assumption $|u_j|\toweakstar\mathscr L^d \mres Q$, which prevents concentration at the boundary. Therefore, $\mu\in\mathcal{Y}_\mathcal{A}^{\rm unif}$, proving the lemma.
\end{proof}

\section{Compensated compactness for concentrations}
\label{sec:main_results}

The goal of this section is to establish the following compensated compactness result, which is the same statement as Theorem~\ref{thm:compensated_compactness} except that the integrand is assumed to be \emph{convex}, not just $\Lambda_{\mathcal{A}}$-convex.

\begin{proposition} \label{pro[:compensated_compactness_convex}
Let $(u_j)_j$ be a sequence as in Definition~\ref{def:pure_concentrations}. Then, for every $q:V\to\R$ that is convex, positively $1$-homogeneous, and non-negative on $\Lambda_\mathcal{A}$, it holds that
\begin{equation*}
    \liminf_{j\to\infty}\int_Qq(u_j)\de x\geq 0.
\end{equation*}
\end{proposition}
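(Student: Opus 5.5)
The plan is to reduce the statement to a pointwise "almost exposed" inequality for $q$, and to verify that inequality along a heat-flow regularisation of $u_j$ rather than on $u_j$ itself.

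\emph{Step 1: the pointwise inequality.} Since $q$ is convex and positively $1$-homogeneous, it is continuous and Lipschitz. Suppose $q\geq 0$ on the closed cone $\Lambda_{\mathcal{A}}$. By compactness of $\mathbb S_V\times\mathbb S_{\R^d}$, for every $\delta>0$ there is $C_\delta>0$ such that
\[
  q(z)\geq -\delta|z|-C_\delta\,|\mathbb A(\xi)z|\qquad\text{for all } z\in V,\ \xi\in\mathbb S_{\R^d}.
\]
The argument is by contradiction. Otherwise there are normalised $(z_n,\xi_n)$ with $q(z_n)<-\delta$ and $|\mathbb A(\xi_n)z_n|\to 0$. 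Any limit point $(z,\xi)$ then satisfies $z\in\ker\mathbb A(\xi)\subseteq\Lambda_{\mathcal{A}}$ and $q(z)\leq -\delta<0$, which is impossible. This is the ${\rm L}^1$-analogue of Tartar's estimate recalled in the introduction. Consequently it suffices to find, for a.e.\ point, a direction $\xi$ along which $u_j$ "varies", with $\mathbb A(\xi)u_j$ small in ${\rm L}^1$ relative to $|u_j|$.

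\emph{Step 2: regularising by the heat flow.} I would localise with a cutoff in $Q$; the error is controlled by (iv), since there is no boundary mass. I would then replace $u_j$ by $P_tu_j$, where $P_t$ is the heat semigroup. By Jensen's inequality for the probability kernel $P_t$, together with convexity of $q$,
\[
  \int q(P_tu_j)\,\de x\leq \int q(u_j)\,\de x \quad\text{up to cutoff errors}.
\]
So it is enough to show $\liminf_j\int q(P_{t_j}u_j)\geq -o(1)$ for a suitable scale $t_j\downarrow 0$. Two requirements fix the scale:
\begin{itemize}
  \item $t_j$ should be small enough that $\|P_{t_j}u_j\|_{{\rm L}^1}$ stays comparable to $1$, i.e.\ the smoothing acts only at the concentration scale;
  \item $t_j$ should still be large enough that $P_{t_j}u_j$ is genuinely smooth, since $\mathcal{A}P_tu_j=0$.
\end{itemize}

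\emph{Step 3: locating a direction with $\mathbb A(\xi)w$ small.} For $w=P_tu_j$ we have $\sum_kA_k\partial_kw=0$. We also have $\partial_tw=\Delta w$ and $w\approx 0$ on most of the domain, because $u_j\to 0$ in measure. Together these should force, at points where $|w|$ is large, the gradient $\nabla w$ to be approximately a rank-one tensor $z\otimes\xi$ with $z$ parallel to $w$. Heuristically, $w$ rises from $0$ to $w(x)$ across a layer of width $\sqrt t$, and the variation in the transversal directions is of lower order. Plugging this into the constraint gives $|\mathbb A(\xi)w|\lesssim(\text{lower order})$. Integrating Step 1 with $z=w(x)$ and $\xi=\xi(x)$ then yields
\[
  \int q(w)\geq -\delta\|w\|_{{\rm L}^1}-C_\delta\cdot o(1).
\]
Letting $j\to\infty$ and then $\delta\to 0$ proves the claim.

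\emph{Main obstacle.} The hard step is quantifying Step 3. One must show, in ${\rm L}^1$ and without Fourier multipliers, that the non-rank-one part of $\nabla P_tu_j$ is small relative to $|P_tu_j|$ on the set where the mass sits. My first attempt would use heat-flow identities for $t\mapsto\int|P_tu_j|$ and $\int\sqrt t\,|\nabla P_tu_j|$, averaged over dyadic scales $t$. Pigeonholing should give a scale at which the mass does not change much; there, I expect $\sqrt t\,|\nabla w|$ to be comparable to $|w|$ only along a single direction. If pointwise rank-one control proves too hard, the fallback is a weaker form of Step 1, with $|\mathbb A(\xi)z|$ replaced by $\sqrt t\,|\mathcal{A}$-type expression$|$ averaged over $\xi\in\mathbb S_{\R^d}$ against the heat kernel's angular distribution.
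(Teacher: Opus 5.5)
Your Step~1 is correct: it is the compactness argument behind \eqref{eq:higher_integr1}. The single-scale Jensen inequality in Step~2 is also fine. Step~3, however, fails for a structural reason. Its conclusion $\int|\mathbb A(\xi(x))w(x)|\de x=o(1)$ says that, at the chosen resolution, the ${\rm L}^1$-mass of $w=\mathcal{P}_{t_j}u_j$ lies asymptotically on $\Lambda_{\mathcal A}$. This is false in general.

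In Example~3 of the Appendix, a fixed fraction of the mass of $u_j$ sits on balls of radius $r_j$ on which $u_j$ is a constant multiple of ${\rm Id}\notin\Lambda_{\rm div}$. For $\sqrt t\ll r_j$ there is no transition layer there: $\nabla w\approx 0$ and $\min_{|\xi|=1}|\mathbb A(\xi)w|\geq c|w|$. This matters for the sign. The barrier
\[
  q(Z):=\tfrac12\sqrt{(z_{11}-z_{22})^2+(z_{12}+z_{21})^2}-\tfrac12(z_{11}+z_{22}),\qquad Z=(z_{ik})\in\R^{2\times 2},
\]
is convex, positively $1$-homogeneous and non-negative on $\{\det Z=0\}$ (use $4\det Z=(z_{11}+z_{22})^2+(z_{21}-z_{12})^2-(z_{11}-z_{22})^2-(z_{12}+z_{21})^2$). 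Yet $q({\rm Id})=-1$.

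Coarser scales do not rescue a single-scale argument either. Superpose Example~2 with strip spacing $s_j\ll r_j$. For $\sqrt t\gg s_j$, the Example~2 part of $w$ is close to a multiple of ${\rm Id}\,\mathscr L^2$ and keeps a fixed fraction of its mass. So for $\sqrt t\gg r_j$, where the Example~3 part has cancelled, even your reduced target $\int q(\mathcal{P}_tu_j)\de x\geq -o(1)$ is false. For $\sqrt t\ll r_j$, the Example~3 part sits at ${\rm Id}$. Hence no choice of $t_j$, and no pigeonholing over scales where the mass is stable, can close the argument. The inequality $\liminf_j\int q(u_j)\de x\geq 0$ is a compensation effect across scales, not a pointwise property at one resolution.

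What is missing is a scale that depends on the piece of mass (a stopping time), together with a quantity conserved across scales that carries the information back to $\int q(u_j)$. The paper normalizes by the smoothed density $S_t=\mathcal{P}_{T-t}f$, where $f=|v|+\sqrt T|\mathcal{A}v|$. It then works with $B=\mathcal{P}_{T-t}v/S$ (so $|B|\leq 1$), $\zeta=\nabla\log S$, and $m^q=\mathcal{P}_{T-t}q(v)/S$. The last of these satisfies $\mathcal{H}_\zeta m^q=0$ and $m^q\geq q(B)$ by \eqref{eq:m^q_bound}, i.e.\ Jensen at every scale.

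Your rank-one picture is exactly the identity $\nabla\mathcal{P}_{T-t}v=S(\nabla B+B\otimes\zeta)$ combined with \eqref{eq:miracle2}. Off $\{q(B)\geq-\eps\}$ one gets $|\zeta|^2\lesssim|\nabla B|^2+|\eta|^2$, whose weighted space--time integrals are $\lesssim M_T$ by \eqref{eq:heat_estimate1}--\eqref{eq:heat_estimate2}. Because $\mathcal{H}_\zeta\log S=|\zeta|^2$, mass that moves from density $\leq R_0$ at $t=0$ to density $\geq R$ must accumulate about $\log(R/R_0)$ of $|\zeta|^2$. The budget on $\{q(B)<-\eps\}$ is bounded, so this mass must accumulate it on $\{q(B)\geq-\eps\}$. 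There the paper kills the flow at rate $|\zeta|^2$ (Lemma~\ref{lem:heat_reaction}) and records $m^q\geq-\eps$. Only $O(M_T/\log(R/R_0))+M_{T,R}$ of unkilled mass remains.

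In Example~3 the killing happens at scales $\sqrt{T-t}\gg r_j$, where $B\approx 0\in\Lambda_{\rm div}$ even though $\mathcal{P}_{T-t}v$ has lost almost all of its ${\rm L}^1$-mass. This is invisible to $w=\mathcal{P}_tu_j$ without the normalization by $S$.

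Separately, (iv) controls only $u$, not $|u_j|$, near $\partial Q$, so your cutoff error need not be small. The paper instead uses the one-sided bound $\int_{Q\setminus Q_s}q(u_j)\de x\geq q\bigl(\int_{Q\setminus Q_s}u_j\de x\bigr)$. It absorbs $\mathcal{A}(\chi_su_j)\neq 0$ through the term $\sqrt T|\mathcal{A}v|$ in $f$.
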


An important technical tool we will employ in the proof is the following: For $f\in {\rm L}^1(\R^d)$ (either scalar or vector-valued) denote the \emph{heat flow} of $f$ at time $t>0$ as
\begin{equation}
    \label{eq:heat_flow}
    \mathcal{P}_tf(x):=\frac{1}{(4\pi t)^{d/2}}\int_{\R^d}{\rm e}^{-|x-y|^2/(4t)}f(y)\de y,  \qquad x \in \R^d,
\end{equation}
that is, the convolution of $f$ with the \emph{heat kernel}
\begin{equation}
    \label{eq:heat_kernel}
    p_t(z):=\frac{1}{(4\pi t)^{d/2}}{\rm e}^{-|z|^2/(4t)},  \qquad z \in \R^d.
\end{equation}
It is well-known that
\[
  \partial_t \mathcal{P}_tf=\Delta \mathcal{P}_t f
\]
for all $t>0$.

In all of the following (until the proof of Proposition~\ref{pro[:compensated_compactness_convex}) fix $s\in(0,1/2)$ and define
\[
  D := Q_s := (s,1-s)^d.
\]
Moreover, for $t>0$ set
\[
  a_t:=\mathcal{P}_t\ONE_D \in {\rm C}^\infty(\R^d).
\]

Let $v \in {\rm C}^\infty_c(\R^d;V)$ be a non-trivial (not everywhere zero), smooth, compactly supported vector field. With a henceforth fixed $T\in(0,1]$, we define
\begin{equation}
    \label{eq:parab_denisty}
    f:=|v|+\sqrt{T}|\mathcal{A}v|
\end{equation}
and
\[
    S:=\mathcal{P}_{T-\frarg}f,\quad V:=\mathcal{P}_{T-\frarg}v,\quad B:=\frac{V}{S},\quad\zeta:=\frac{\nabla S}{S}=\nabla\log S,\quad \eta:=\frac{\mathcal{P}_{T-\frarg}\mathcal{A}v}{S},
\]
together with the abbreviations
\[
    S_t:=\mathcal{P}_{T-t}f,\quad V_t:=\mathcal{P}_{T-t}v,\quad B_t:=\frac{V_t}{S_t},\quad\zeta_t:=\frac{\nabla S_t}{S_t}=\nabla\log S_t,\quad \eta_t:=\frac{\mathcal{P}_{T-t}\mathcal{A}v}{S_t}
\]
for $t \in [0,T)$ (as functions of $x$). The infinite speed of propagation for the solutions to the heat equation yields that $S > 0$ for $t > 0$ and hence all these quantities are indeed well-defined. For this and several other properties of the heat flow used in the following see~\cite{G09}. Furthermore, thanks to the order-preserving property of the heat semigroup,
\[
  |B_t|\leq 1.
\]
Indeed, we have 
\begin{equation*}
    |V(t,x)|\leq \mathcal{P}_{T-t}|v|(x)\leq \mathcal{P}_{T-t}f(x)=S(t,x),
\end{equation*}
where we used the triangle inequality and the monotonicity of the integral together with the fact that $|v|\leq f$.

In the following we denote by
\[
  \mathcal{H}:=\partial_t+\Delta
\]
the \emph{backward heat operator} and by
\[
  \mathcal{H}_\zeta := \mathcal{H}+2\zeta\cdot\nabla
\]
the \emph{backward heat operator with drift} (advection--diffusion operator). Clearly,
\[
  \mathcal{H}S=0  \qquad\text{and}\qquad
  \mathcal{H}V=0.
\]
Some further properties of the quantities defined above are contained in the following technical lemma.

\begin{lemma}
It holds that
\begin{equation}
    \label{eq:heat_identities}
    \mathcal{H}_\zeta B=0,\qquad \mathcal{H}_\zeta|B|^2=2|\nabla B|^2,\qquad \mathcal{H}_\zeta[\log S]=|\zeta|^2.
\end{equation}
Moreover,
\begin{equation}
    \label{eq:heat_estimate1}
    \int_0^T\int_{\R^d}aS|\nabla B|^2\de x\de t\leq\frac{1}{2}\int_{\R^d}a_Tf\de x
\end{equation}
and
\begin{equation}
  \label{eq:heat_estimate2}
    \int_0^T\int_{\R^d}aS|\eta|^2\de x\de t\leq\int_{\R^d}a_Tf\de x.
\end{equation}
\end{lemma}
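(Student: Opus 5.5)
The plan is to derive everything from the product rule for $\mathcal{H}$ and a single duality identity between the backward heat flow ($S$, $V$) and the forward heat flow $a_t=\mathcal{P}_t\ONE_D$. Here $a$ in \eqref{eq:heat_estimate1}--\eqref{eq:heat_estimate2} means $a(t,x)=a_t(x)$, which satisfies $\partial_t a=\Delta a$.

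\emph{The identities \eqref{eq:heat_identities}.} For smooth $g$ the product rule gives
\[
\mathcal{H}(Sg)=g\,\mathcal{H}S+S\,\mathcal{H}g+2\nabla S\cdot\nabla g = S\,\mathcal{H}_\zeta g,
\]
since $\mathcal{H}S=0$ and $\zeta=\nabla S/S$.
\begin{itemize}
\item With $g=B$ (componentwise) we have $SB=V$ and $\mathcal{H}V=0$, so $\mathcal{H}_\zeta B=0$.
\item With $g=|B|^2$, using $\mathcal{H}|B|^2=2B\cdot\mathcal{H}B+2|\nabla B|^2$ (and the same for the drift part), we get $\mathcal{H}_\zeta|B|^2=2B\cdot\mathcal{H}_\zeta B+2|\nabla B|^2=2|\nabla B|^2$.
\item For $\log S$, a direct computation gives $\mathcal{H}\log S=\mathcal{H}S/S-|\nabla S|^2/S^2=-|\zeta|^2$. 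Adding the drift term $2\zeta\cdot\nabla\log S=2|\zeta|^2$ yields $\mathcal{H}_\zeta\log S=|\zeta|^2$.
\end{itemize}

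\emph{Duality identity.} For $t\in(0,T)$ and suitable $g$,
\[
\frac{\de}{\de t}\int_{\R^d}a_tS_tg_t\de x=\int_{\R^d}\bigl(\partial_ta\,Sg+a\,\partial_t(Sg)\bigr)\de x=\int_{\R^d}a\,\mathcal{H}(Sg)\de x=\int_{\R^d}aS\,\mathcal{H}_\zeta g\de x.
\]
The middle equality uses two integrations by parts, $\int\Delta a\,Sg=\int a\,\Delta(Sg)$, together with $\partial_ta=\Delta a$. These integrations by parts are legitimate because $a$ is bounded with bounded derivatives and $S$, $V$ and their derivatives decay like Gaussians ($v$ has compact support). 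It remains to control $g$ and its derivatives: for $g=|B|^2$ we have $|B|\le1$ and the terms $\nabla B$ carry a factor $S$.

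\emph{Estimate \eqref{eq:heat_estimate1}.} Take $g=|B|^2$ and integrate the duality identity over $(\epsilon,T-\epsilon)$. This gives $2\int\int aS|\nabla B|^2$ equal to the difference of the endpoint values of $\int aS|B|^2=\int a|V|^2/S$.
\begin{itemize}
\item At $t\to0$, the endpoint term $\int a_0|V_0|^2/S_0$ is non-negative, since $a_0=\ONE_D\ge0$, so it can be discarded.
\item At $t\to T$, we have $S_t\to f$ and $V_t\to v$, and $|V_t|^2/S_t\le \mathcal{P}_{T-t}\bigl(|v|^2/f\bigr)$ by Cauchy--Schwarz in the heat kernel, with the convention $0/0=0$. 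Hence $\limsup\int a_t|V_t|^2/S_t\le\int a_T|v|^2/f\le\int a_Tf$, since $|v|\le f$.
\end{itemize}
Monotone convergence in $\epsilon$ then gives \eqref{eq:heat_estimate1}.

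\emph{Estimate \eqref{eq:heat_estimate2}.} I would not use the structural identity $\eta=\mathcal{A}B+\mathbb A(\zeta)B$ here. Instead, the order-preserving property gives $|\mathcal{P}_{T-t}\mathcal{A}v|\le\mathcal{P}_{T-t}|\mathcal{A}v|\le T^{-1/2}\mathcal{P}_{T-t}f=T^{-1/2}S$, because $\sqrt T|\mathcal{A}v|\le f$. Hence $|\eta|\le T^{-1/2}$ and
\[
\int_0^T\int_{\R^d}aS|\eta|^2\de x\de t\le\frac1T\int_0^T\int_{\R^d}a_tS_t\de x\de t.
\]
Applying the duality identity with $g\equiv1$ shows that $t\mapsto\int a_tS_t$ is constant, equal to its limit $\int a_Tf$ as $t\to T$. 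This gives exactly \eqref{eq:heat_estimate2}.

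The main obstacle is technical: justifying the endpoint limits at $t=T$, where $S$ may vanish outside $\mathrm{supp}\,f$ and $f$ is only Lipschitz, and controlling the interchange of limits in the integrations by parts. The bound $|B|\le1$ and the Gaussian decay should handle both.
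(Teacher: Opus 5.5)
Your proposal is correct and follows essentially the paper's proof: the identities come from the product rule $\mathcal{H}(Sg)=S\mathcal{H}_\zeta g$, and \eqref{eq:heat_estimate1} comes from differentiating $\int_{\R^d}a_tS_t|B_t|^2\de x$ in time, discarding the $t=0$ term, and using the semigroup identity at $t=T$. The only deviations are cosmetic: you bound the $t=T$ endpoint term via Cauchy--Schwarz rather than simply $|B|\leq 1$, and in \eqref{eq:heat_estimate2} you use the pointwise bound $|\eta|\leq T^{-1/2}$ together with conservation of $\int_{\R^d} a_tS_t\de x$, whereas the paper keeps one factor $\mathcal{P}_{T-t}|\mathcal{A}v|$ and obtains the slightly sharper intermediate bound $\sqrt{T}\int_{\R^d}a_T|\mathcal{A}v|\de x$.
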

\begin{proof}
Let $g$ be a smooth function on $[0,\infty)\times\R^d$ (either scalar-valued or vector-valued). We have the key identity
\begin{equation}
\label{eq:miracle1}
  \mathcal{H}(Sg)=S\partial_t g + S\Delta g+2\nabla S \cdot \nabla g = S \mathcal{H}_\zeta g,
\end{equation}
where $\nabla S \cdot \nabla g := \nabla S (\nabla g)^T$ (the scalar product between the row-vector $\nabla S$ and the row-vector(s) in $\nabla g$, which is just one row if $g$ is scalar-valued).

Thanks to~\eqref{eq:miracle1} applied with $g=B$,
\begin{equation*}
    0=\mathcal{H}V=\mathcal{H}(SB)=S\mathcal{H}_\zeta B,
\end{equation*}
which gives
\[
  \mathcal{H}_\zeta B=0,
\]
since $S>0$ everywhere. This is the first claim in~\eqref{eq:heat_identities}. Then also
\begin{equation*}
   \mathcal{H}_\zeta|B|^2=2B\partial_tB+2B\Delta B+2|\nabla B|^2+4B\zeta \cdot \nabla B
   = 2B \mathcal{H}_\zeta B+2|\nabla B|^2
   = 2|\nabla B|^2,
\end{equation*}
which is the second claim in~\eqref{eq:heat_identities}. To see also the third claim in~\eqref{eq:heat_identities}, we compute
\begin{equation*}
    \mathcal{H}_\zeta[\log S]=\frac{\mathcal{H}S}{S}-|\zeta|^2+2|\zeta|^2=|\zeta|^2.
\end{equation*}
   
Setting $F:=S|B|^2$ and using again~\eqref{eq:miracle1}, one obtains
\begin{equation*}
    \partial_t F + \Delta F = \mathcal{H}F=S\mathcal{H}_\zeta|B|^2=2S|\nabla B|^2.
\end{equation*}
 We can then compute, as $\partial_t a_t = \Delta a_t$ and for $t>0$,
    \begin{align*}
        \frac{\de}{\de t}\int_{\R^d}a_tF_t\de x
        &=\int_{\R^d}\big(\Delta a_t F_t+a_t\partial_tF_t\big)\de x \\
        &=\int_{\R^d}\big(\Delta a_t F_t-a_t\Delta F_t+2a_tS_t|\nabla B_t|^2\big)\de x \\
        &=2\int_{\R^d}a_tS_t|\nabla B_t|^2\de x,
    \end{align*}
    where for the last equality we integrated by parts the second term and canceled. We now integrate the previous identity in time from $0$ to $T-\delta$ to get 
\begin{align*}
    2\int_0^{T-\delta}\int_{\R^d}aS|\nabla B|^2\de x\de t&=\int_{\R^d}a_{T-\delta}S_{T-\delta}|B_{T-\delta}|^2\de x-\int_{D}S_0|B_0|^2\de x \\
    &\leq\int_{\R^d}a_{T-\delta}S_{T-\delta}\de x,
\end{align*}
where we used that $|B|\leq 1$ and neglected the second integral. Passing to the limit as $\delta\searrow 0$ yields~\eqref{eq:heat_estimate1}.

Finally,
\begin{equation*}
    S_t|\eta_t|^2=\frac{|\mathcal{P}_{T-t}\mathcal{A}v|^2}{S_t}\leq\frac{\mathcal{P}_{T-t}|\mathcal{A}v|}{\sqrt{T}},
\end{equation*}
where we used the triangle inequality to infer $|\mathcal{P}_{T-t}\mathcal{A}v|\leq \mathcal{P}_{T-t}|\mathcal{A}v|$ as well as the estimate $\sqrt{T}\mathcal{P}_{T-t}|\mathcal{A}v|\leq \mathcal{P}_{T-t}f = S_t$ via the pointwise bound $\sqrt{T}|\mathcal{A}v|\leq f$. Multiplying by $a$ and integrating over $(0,T)\times\R^d$,
\begin{equation*}
    \int_0^T\int_{\R^d}a_tS_t|\eta_t|^2\de x\de t\leq\frac{1}{\sqrt{T}}\int_0^T\int_{\R^d}a_t\mathcal{P}_{T-t}|\mathcal{A}v|\de x\de t.
\end{equation*}
We now use that the heat flow induces a semigroup of self-adjoint operators, i.e.,
\begin{equation} \label{eq:heat_semigroup}
    \int_{\R^d}\mathcal{P}_t f\mathcal{P}_s g\de x=\int_{\R^d}\mathcal{P}_{t+s} fg\de x,\qquad f,g\in {\rm L}^2(\R^d),
\end{equation}
to arrive at
\begin{equation*}
  \frac{1}{\sqrt{T}}\int_0^T\int_{\R^d}a_t\mathcal{P}_{T-t}|\mathcal{A}v|\de x\de t
  =\sqrt{T}\int_{\R^d}a_T|\mathcal{A}v|\de x\leq\int_{\R^d}a_Tf\de x.
\end{equation*}
Combining the last two estimates yields~\eqref{eq:heat_estimate2}.
\end{proof}

We now exploit the fact that the heat flow commutes with the differential operator $\mathcal{A}$ in order to derive a quantitative higher integrability estimate where $B$ is away from the wave cone. First observe that 
\begin{equation*}
    \mathcal{P}_{T-t}\mathcal{A}v=\mathcal{A}V_t=\mathcal{A}(S_tB_t).
\end{equation*}
So, using the Leibniz rule,
\begin{equation*}
    \mathcal{P}_{T-t}\mathcal{A}v
    =S_t\sum_{k=1}^d A_k \, \partial_kB_t + \sum_{k=1}^d\partial_kS_t\, A_kB_t
    =S_t\mathcal{A}B_t+\mathbb A(\nabla S_t)B_t.
\end{equation*}
We note in passing that this argument uses heavily that $\mathcal{A}$ is a first-order differential operator; for higher-order operators various commutators appear. Dividing by $S$,
\begin{equation}
    \label{eq:miracle2}
    \mathcal{A}B+\mathbb A(\zeta)B=\eta.
\end{equation}
This identity will allow us to prove the following key ellipticity lemma.

\begin{lemma}
Let $q:V\to\R$ be convex, positively $1$-homogeneous, and non-negative on $\Lambda_\mathcal{A}$. Then, for every $\eps>0$ there exists a constant $C=C(\mathcal{A},\eps,q)>0$ such that 
\begin{equation}
    \label{eq:higher_integr1}
    |\zeta|^2\leq C(|\nabla B|^2+|\eta|^2) \quad\text{in}\quad \{q(B)<-\eps\},
\end{equation}
and
\begin{equation}
    \label{eq:higher_integr2}
    \int_0^T\int_{\{q(B_t)<-\eps\}}aS|\zeta|^2\de x\de t\leq C\int_{\R^d}a_Tf\de x.
\end{equation}
\end{lemma}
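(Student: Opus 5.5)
Two steps: a pointwise linear-algebra inequality built on the identity~\eqref{eq:miracle2}, then integration against the heat estimates~\eqref{eq:heat_estimate1}--\eqref{eq:heat_estimate2}.

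\emph{Pointwise bound~\eqref{eq:higher_integr1}.} By~\eqref{eq:miracle2},
\[
  \mathbb A(\zeta)B=\eta-\mathcal{A}B,
  \qquad\text{so}\qquad
  |\mathbb A(\zeta)B|\le|\eta|+C_{\mathcal A}|\nabla B|,
\]
with $C_{\mathcal A}$ depending only on the coefficients $A_k$. It therefore suffices to show a uniform coercivity bound of the form
\[
  |\mathbb A(\zeta)B|\ge c\,|\zeta|
  \qquad\text{whenever } |B|\le 1,\ q(B)<-\eps .
\]
Both sides are $1$-homogeneous in $\zeta$, so we may take $|\zeta|=1$ (the case $\zeta=0$ is trivial). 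Consider the set
\[
  K_\eps:=\{B\in V:\ |B|\le1,\ q(B)\le-\eps\},
\]
which is compact since $q$ is convex on a finite-dimensional space, hence continuous. We claim
\[
  m:=\min\bigl\{|\mathbb A(\xi)B|:\ |\xi|=1,\ B\in K_\eps\bigr\}>0 .
\]
This is a minimum of a continuous function over a compact set, so it is attained. If it were zero, some $B\in K_\eps$ would satisfy $\mathbb A(\xi)B=0$ for a unit $\xi$. Then $B\in\ker\mathbb A(\xi)\subseteq\Lambda_\mathcal{A}$, which forces $q(B)\ge0$ and contradicts $q(B)\le-\eps$. The set $K_\eps$ is nonempty in the interesting case; if it is empty there is nothing to prove.

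We use $|B_t|\le1$, established earlier. On $\{q(B)<-\eps\}$ this gives
\[
  m|\zeta|\le|\eta|+C_{\mathcal A}|\nabla B|,
\]
and squaring yields~\eqref{eq:higher_integr1} with $C=2\max\{1,C_{\mathcal A}^2\}/m^2$. The constant depends on $\mathcal A$, $\eps$ and $q$, as required.

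\emph{Integrated bound~\eqref{eq:higher_integr2}.} Multiply~\eqref{eq:higher_integr1} by $aS\ge0$ and integrate over $\{q(B_t)<-\eps\}\subseteq\R^d$ and $t\in(0,T)$. Extend the right-hand side to all of $\R^d$ and apply~\eqref{eq:heat_estimate1} and~\eqref{eq:heat_estimate2}. This bounds the left-hand side by
\[
  C\Bigl(\tfrac12+1\Bigr)\int_{\R^d}a_Tf\de x ,
\]
which is~\eqref{eq:higher_integr2} after renaming $C$. Note that $a_t\ge0$ because it is the heat flow of an indicator function.

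The only substantive point is the compactness argument that makes $m>0$. It relies on two ingredients: the uniform bound $|B|\le1$, which is why $f$ dominates $|v|$, and the strict gap $-\eps$, which keeps $K_\eps$ at positive distance from the wave cone. Everything else is bookkeeping.
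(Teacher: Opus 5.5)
Your proposal is correct and follows essentially the same route as the paper. The paper likewise takes the minimum of $|\mathbb A(\xi)z|$ over the compact set $\{(z,\xi): |z|\le 1,\ q(z)\le-\eps,\ |\xi|=1\}$, shows it is positive via $\ker\mathbb A(\xi)\subseteq\Lambda_{\mathcal A}$, and combines this with \eqref{eq:miracle2} and $|B|\le 1$ to get the pointwise bound; it then multiplies by $aS$ and integrates using \eqref{eq:heat_estimate1}--\eqref{eq:heat_estimate2}.
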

\begin{proof}
We start by considering the compact set
\begin{equation*}
   K_{q,\eps}:=\bigl\{(z,\xi)\in V\times\mathbb S^{d-1}:\;|z|\leq 1,\;q(z)\leq-\eps\bigr\}\subseteq V\times\mathbb S^{d-1}.
\end{equation*}
Assuming that $K_{q,\eps}$ is non-empty (otherwise there is nothing to prove since $\{q(B)<-\eps\}$ would be empty), we define 
\begin{equation*}
   c_{q,\eps}:=\min_{(z,\xi) \in K_{q,\eps}}|\mathbb A(\xi)z|>0.
\end{equation*}
The claimed positivity of $c=c_{q,\eps}$ follow from the fact that $(z,\xi)\mapsto|\mathbb A(\xi)z|$ is continuous and if its minimum over $K_{q,\eps}$, attained at some $(z_0,\xi_0) \in K_{q,\eps}$, was zero, then $z_0 \in {\rm ker}\mathbb A(\xi_0) \subseteq \Lambda_{\mathcal{A}}$ and hence $q(z_0)\geq 0$ by assumption, contradicting the definition of $K_{\eps,q}$.

Now we use~\eqref{eq:miracle2} to estimate in $\{q(B)<-\eps\}$ as follows:
\begin{equation*}
   c_{q,\eps}|\zeta|\leq|\mathbb A(\zeta)B|\leq|\eta|+|\mathcal{A}B|\leq|\eta|+C_\mathcal{A}|\nabla B|.
\end{equation*}
Squaring both sides, applying Young's inequality, and rearranging gives~\eqref{eq:higher_integr1}. 

To prove~\eqref{eq:higher_integr2} we multiply both sides of~\eqref{eq:higher_integr1} by $aS$ and integrate to obtain
\begin{align*}
   \int_0^T\int_{\{q(B_t)<-\eps\}}aS|\zeta|^2\de x\de t&\leq C\int_0^T\int_{\{q(B_t)<-\eps\}}aS(|\nabla B|^2+|\eta|^2)\de x\de t \\
   &\leq C\int_{\R^d}a_Tf\de x,
\end{align*}
where in the last estimate we applied~\eqref{eq:heat_estimate1} and~\eqref{eq:heat_estimate2} (and the constant was doubled).
\end{proof}

The next result contains an a-priori estimate for the heat equation with a source term that is adapted to our situation.

\begin{lemma}
\label{lem:heat_reaction}
Let $0<\tau<T$ and suppose that $k:(0,T)\times\R^d\to[0,\infty)$ is a measurable function which satisfies
 \begin{equation}
 \label{eq:weight_integrability}
     \int_0^\tau\int_{\R^d}kaS\de x\de t<+\infty.
 \end{equation}
 Then there exists a (distributional) solution $h:(0,\tau)\times\R^d\to[0,\infty]$ of 
 \begin{equation}
     \label{eq:forcing_heat}
    \begin{cases}
     \partial_th=\Delta h-kh\qquad&{\rm in}\;(0,\tau)\times\R^d, \\
     h_0=\ONE_D&{\rm on}\;\R^d,
    \end{cases}
 \end{equation}
which satisfies
\[
  0\leq h\leq a.
\]
Moreover, setting
\[
  \rho:=Sh, \qquad \rho_t := S_t h_t,
\]
for all bounded test functions $\varphi\in {\rm C}^{1,2}([0,\tau]\times\R^d)$ such that
\begin{equation}
\label{eq:gaussian_bound}
|S\varphi|+|\nabla(S\varphi)|+|\Delta(S\varphi)|+|\partial_t(S\varphi)|\leq C{\rm e}^{-c|x|^2}, \qquad [0,\tau]\times\R^d,
\end{equation}
with a constant $C=C(\varphi)>0$, it holds that
\begin{equation}
    \label{eq:giga_heat}
    \int_{\R^d} \varphi_\tau\rho_\tau-\varphi_0\rho_0\de x +\int_0^\tau\int_{\R^d}k\varphi \rho\de x\de t=\int_0^\tau\int_{\R^d}\rho \mathcal{H}_\zeta\varphi\de x\de t,
\end{equation}
where, as usual, we have set $\varphi_t := \varphi(t,\frarg)$.
\end{lemma}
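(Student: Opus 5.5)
The plan is to construct $h$ as a monotone limit of solutions with truncated absorption, and then to derive~\eqref{eq:giga_heat} by combining the identity~\eqref{eq:miracle1} with an integration by parts in space.

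\emph{Step 1: existence and the bound $0\le h\le a$.} For $n\in\N$ set $k_n:=\min\{k,n\}$. The linear problem $\partial_t h^n=\Delta h^n-k_nh^n$, $h^n_0=\ONE_D$, has a bounded absorption coefficient, so it has a unique bounded mild solution. One obtains it as the fixed point of the Duhamel map
\[
  h^n_t=\mathcal{P}_t\ONE_D-\int_0^t\mathcal{P}_{t-s}(k_nh^n_s)\de s ,
\]
which is a contraction on ${\rm L}^\infty$ over short time intervals. The comparison principle, or equivalently the Feynman--Kac representation
\[
  h^n(t,x)=\mathbb E_x\Big[\ONE_D(X_t)\,{\rm e}^{-\int_0^t k_n(t-r,X_r)\de r}\Big],
\]
gives $0\le h^n\le \mathcal{P}_t\ONE_D=a_t$. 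The same representation shows that $h^n$ is nonincreasing in $n$. Set $h:=\lim_n h^n$; then $0\le h\le a$. To pass to the limit in the distributional formulation of~\eqref{eq:forcing_heat}, the terms $\partial_t h^n$ and $\Delta h^n$ converge by dominated convergence since $h^n\le a$. For the absorption term we use monotone convergence for $k_nh^n\to kh$, with local integrability of $kh$ coming from $kh\le ka$. Assumption~\eqref{eq:weight_integrability} supplies this integrability because $S$ is locally bounded below by a positive constant on $[0,\tau]\times\R^d$, as $\tau<T$.

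\emph{Step 2: the weighted identity.} For smooth data, set $\rho=Sh$ and compute
\[
  \partial_t(\varphi\rho)=\varphi h\,\partial_tS+\varphi S(\Delta h-kh)+\rho\,\partial_t\varphi .
\]
Substituting $\partial_tS=-\Delta S$ gives
\[
  \partial_t(\varphi\rho)=S\varphi\,\Delta h-h\,\Delta(S\varphi)+h\,\mathcal{H}(S\varphi)-k\varphi\rho .
\]
By~\eqref{eq:miracle1} with $g=\varphi$ we have $h\,\mathcal{H}(S\varphi)=\rho\,\mathcal{H}_\zeta\varphi$. The remaining part $S\varphi\,\Delta h-h\,\Delta(S\varphi)$ is a spatial divergence, so it integrates to zero over $\R^d$. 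Integrating the identity over $(0,\tau)\times\R^d$ then yields~\eqref{eq:giga_heat}.

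The justification goes through the approximants. Run the computation for $h^n$ in weak form, i.e.\ test the equation for $h^n$ against the function $\psi:=S\varphi$. The Gaussian bound~\eqref{eq:gaussian_bound} makes $\psi$ an admissible test function on the whole space: $h^n\le 1$, and $\psi,\nabla\psi,\Delta\psi,\partial_t\psi$ are integrable. This gives~\eqref{eq:giga_heat} with $k_n,h^n,\rho^n:=Sh^n$ in place of $k,h,\rho$. Then let $n\to\infty$. The terms $\int\varphi_\tau\rho^n_\tau$ and $\int\rho^n\mathcal{H}_\zeta\varphi=\int h^n\mathcal{H}(S\varphi)$ converge by dominated convergence, using the Gaussian bounds. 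The absorption term converges by dominated convergence as well, with dominating function
\[
  |k_n\varphi\rho^n|\le\|\varphi\|_\infty\,kaS\in{\rm L}^1 ,
\]
which is exactly hypothesis~\eqref{eq:weight_integrability}. The time-$\tau$ trace is handled by pointwise convergence $h^n_\tau\to h_\tau$, which follows from the monotonicity in $n$ at each fixed time.

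\emph{Main obstacle.} I expect the main difficulty to be this limiting and regularity step, not the algebra. Since $k$ is merely measurable and possibly unbounded, $h$ has no classical regularity, and the trace $\rho_\tau$ together with the initial term $\rho_0=S_0\ONE_D$ must be interpreted carefully. For the initial term I would use continuity of $t\mapsto h^n_t$ in ${\rm L}^1_{\rm loc}$. For the time-$\tau$ trace, I would define $h_\tau$ directly as the monotone limit, which is why the truncation and approximation scheme is the natural route.
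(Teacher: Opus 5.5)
Your proposal is correct and follows essentially the paper's route: truncation $k_n=\min\{k,n\}$, decreasing approximants with $0\le h^n\le a$, dominated convergence with dominating function $kaS$, and testing with $S\varphi$ via \eqref{eq:miracle1}. It differs only in that you prove the order bounds by Feynman--Kac instead of citing them, and you derive \eqref{eq:giga_heat} for each $h^n$ before letting $n\to\infty$, which handles the trace at $t=\tau$ more cleanly than the paper's direct testing of the limit equation; one small slip is that $k_nh^n$ is not monotone in $n$ ($k_n$ increases while $h^n$ decreases), so in Step~1 you should invoke dominated convergence with the bound $k_nh^n\le ka$ (locally integrable by the local lower bound on $S$) rather than monotone convergence.
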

\begin{proof}
We proceed by approximation and let, for $n\in\N$, $k^n:=\min\{k,n\}$. A solution for the source density $k^n$ exists and is unique by~\cite[Proposition~2.3~(a)]{RS99}. Thanks to~\cite{B77} this solution coincides with the one provided by the Duhamel formula, namely
\begin{equation}
\label{eq:distr_Duhamel}
    h^n_t=a_t-\int_0^t\mathcal{P}_{t-s}(k^n_s h^n_s)\de s,\qquad t\in(0,\tau),
\end{equation}
where we recall that $a_t=\mathcal{P}_t\ONE_D$. By classical order estimates, see~\cite[Proposition 2.3]{RS99}, we also get
\[
  0\leq h^n\leq a  \qquad\text{and}\qquad
  h^{n+1}\leq h^n.
\]
In particular, the pointwise limit $h := \lim_{n\to\infty}h^n$ exists.

For any  $\phi \in {\rm C}^\infty_c((0,\tau) \times \R^d)$ there exists a constant $C_{\phi,\tau}>0$ such that 
\begin{equation*}
    |\phi|\leq C_{\phi,\tau}S
\end{equation*}
and consequently 
\begin{equation*}
    |\mathcal{P}_{t-s}\phi|\leq C_{\phi,\tau}S_s,  \qquad s \in (0,\tau).
\end{equation*}
The latter implies that $k^nh^n\mathcal{P}_{t-\frarg}\phi\leq C_{\phi,\tau}kaS\in {\rm L}^1([0,\tau]\times\R^d)$, with the integrability due to~\eqref{eq:weight_integrability}. Therefore, we can multiply~\eqref{eq:distr_Duhamel} by $\phi$, integrate, and apply the dominated convergence theorem to pass to the limit as $n \to \infty$, obtaining
\[
  \int_0^\tau \int_{\R^d} h_t\phi \de x \de t
  = \int_0^\tau \int_{\R^d} a_t -\int_0^t k_s h_s \mathcal{P}_{t-s}\phi \de s \de x \de t.
\]
Since this holds for all such $\phi$, we conclude that $h$ is a solution to~\eqref{eq:forcing_heat}. 

To prove~\eqref{eq:giga_heat} we start again from the PDE solved by $h$. Consider the distributional formulation of \eqref{eq:forcing_heat}, that is,
\begin{equation*}
 \int_{\R^d}\phi_\tau h_\tau-\phi_0h_0\de x-\int_0^\tau\int_{\R^d}h\partial_t\phi\de x\de t=\int_0^\tau\int_{\R^d}(h\Delta\phi-kh\phi)\de x\de t,
\end{equation*}
where $\phi$ is any test function in ${\rm C}^{1,2}([0,\tau]\times\R^d)$. Indeed, we may take $\phi$ from this class thanks to a standard cutoff procedure and the bounds \eqref{eq:gaussian_bound}. We can then choose $\phi:=S\varphi$ and rearrange the terms, so that
\begin{equation*}
     \int_{\R^d}\varphi_\tau\rho_\tau-\varphi_0\rho_0\de x=\int_0^\tau\int_{\R^d} h\mathcal{H}(S\varphi)-k\varphi\rho \de x\de t.
    \end{equation*}
Using that $\mathcal{H}(S\varphi)=S\mathcal{H}_\zeta\varphi$, we end up with \eqref{eq:giga_heat}.
\end{proof}

The next lemma contains our central quantitative compensated compactness estimate.

\begin{proposition}
\label{prop:almost_exposed}
Let $q:V\to\R$ be convex, positively $1$-homogeneous, and non-negative on $\Lambda_\mathcal{A}$. Set
\[
    M_{T}:=\int_{\R^d}a_Tf\de x,\qquad 
    M_{T,R}:=\int_{\{f\leq R\}}a_Tf\de x, \qquad
  N_q:=\max_{|z|\leq 1}\{-q(z),0\}.
\]
Then, for all $R_0,R>0$ such that  $\|S_0\|_{{\rm L}^\infty}=\|\mathcal{P}_Tf\|_{{\rm L}^\infty}<R_0<{\rm e}^{-1} R$, and for all $\eps>0$, there exists a constant $C=C(\mathcal{A},q,\eps)>0$ such that 
\begin{equation}
    \label{eq:almost_exposed}
    \int_{\R^d}a_Tq(v)\de x 
    \geq -\eps M_T-N_q\bigg(\frac{CM_T}{\log(R/R_0)}+M_{T,R}\bigg).
\end{equation}
\end{proposition}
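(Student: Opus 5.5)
The plan is to run the same ``weighted energy'' argument that gave~\eqref{eq:heat_estimate1}, but with a cleverly chosen scalar test function $\varphi$ of $(B,\log S)$ in place of $|B|^2$. For any sufficiently regular $\varphi$ on $[0,T)\times\R^d$, combining $\partial_ta=\Delta a$, $\mathcal{H}S=0$ and~\eqref{eq:miracle1} and integrating by parts gives
\[
\frac{\de}{\de t}\int_{\R^d}a_tS_t\varphi_t\de x=\int_{\R^d}a_tS_t\,\mathcal{H}_\zeta\varphi_t\de x .
\]
This is exactly the structure of~\eqref{eq:giga_heat} with $k=0$. Since $S_T=f$, $B_T=v/f$ and $q$ is $1$-homogeneous, we have $a_Tq(v)=a_TS_Tq(B_T)$. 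At $t=0$ we have $a_0=\ONE_D$ and $S_0=\mathcal{P}_Tf<R_0$. The strategy is therefore to find $\varphi$ with $\varphi_0\ge 0$, with $q(B_T)\ge\varphi_T$ up to the admissible errors, and with $\mathcal{H}_\zeta\varphi$ bounded below by something controlled through~\eqref{eq:higher_integr2}.

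Two structural facts drive the choice of $\varphi$. First, since $\mathcal{H}_\zeta B=0$ by~\eqref{eq:heat_identities} and $q$ is convex, we get $\mathcal{H}_\zeta q(B)=\sum \partial^2 q(B)[\nabla B,\nabla B]\ge 0$, in the distributional/viscosity sense. Second, $\mathcal{H}_\zeta\log S=|\zeta|^2$. Set $L:=\log(R/R_0)>1$ and define
\[
\varphi:=\max\Bigl\{q(B),\;-\eps,\;-\tfrac{N_q}{L}\bigl(\log S-\log R_0\bigr)\Bigr\}.
\]
At $t=0$ the third entry is $\ge 0$ because $S_0<R_0$, so $\varphi_0\ge0$ and the initial term is nonnegative.

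At $t=T$ we split into cases. If $f>R$, then $\log f-\log R_0\ge L$, so the third entry is $\le -N_q\le q(B_T)$ (recall $|B_T|\le1$). Hence $\varphi_T=\max\{q(B_T),-\eps\}\le q(B_T)+\eps$. If $f\le R$, we only use $q(B_T)\ge -N_q$ together with $\varphi_T\ge q(B_T)$ trivially, and $\varphi_T\le\max\{q(B_T),-\eps,0\}$ after truncating the third entry at $0$. This costs at most $N_q$ on $\{f\le R\}$. Altogether
\[
\int a_Tq(v)\ge\int a_Tf\varphi_T-\eps M_T-N_qM_{T,R}.
\]

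For the interior term, the maximum of functions that are subsolutions in a generalized sense is again one, with $\mathcal{H}_\zeta\varphi$ bounded below by the value for the active branch. The branches $q(B)$ and $-\eps$ give $\ge0$. The $\log S$ branch gives $-N_q|\zeta|^2/L$, but it can only be active where $q(B)\le-\eps$. By~\eqref{eq:higher_integr2} (applied with $\eps/2$ to pass from $\le$ to $<$),
\[
\int_0^T\!\!\int aS\,\mathcal{H}_\zeta\varphi\ge-\frac{N_q}{L}\int_0^T\!\!\int_{\{q(B)<-\eps/2\}}aS|\zeta|^2\ge-\frac{CN_qM_T}{L}.
\]
Combining with the boundary terms gives~\eqref{eq:almost_exposed}.

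The main obstacle is the rigorous justification. First, $\varphi$ is only Lipschitz, being a max of three functions with $q$ merely convex, so one has to regularize. I would replace $q$ by a smooth convex mollification $q_\delta$, which remains $1$-homogeneous up to an $O(\delta)$ error, and replace $\max$ by a smooth convex maximum $m_\delta$ that is nondecreasing in each entry. Then $\mathcal{H}_\zeta m_\delta(g_1,g_2,g_3)\ge\sum\partial_im_\delta\,\mathcal{H}_\zeta g_i$ holds classically by convexity of $m_\delta$. Second, the boundary terms and the integration by parts on $\R^d$ need control. Here I would use Lemma~\ref{lem:heat_reaction}'s framework: $a$ has Gaussian decay, and $\log S$ grows at most quadratically since $S$ is a heat flow of a compactly supported $f$. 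Third, the limit $t\nearrow T$ is handled as in the proof of~\eqref{eq:heat_estimate1}.
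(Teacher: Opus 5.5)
Your identity $\frac{\de}{\de t}\int a_tS_t\varphi_t\de x=\int a_tS_t\,\mathcal{H}_\zeta\varphi_t\de x$ is correct, and testing it with a maximum of a subsolution and a $\log S$-barrier is a viable strategy. However, the particular $\varphi$ you chose makes the argument collapse at $t=T$.

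\textbf{The endpoint step fails.} On $\{f>R\}$ you get $\varphi_T=\max\{q(B_T),-\eps\}$. The claimed bound $\max\{q(B_T),-\eps\}\le q(B_T)+\eps$ is false as soon as $q(B_T)<-2\eps$. The actual loss is $\int_{\{f>R\}}a_Tf\,(-\eps-q(B_T))_+\de x$, which can be as large as $N_q\int_{\{f>R\}}a_Tf\de x$. This is the concentrated mass where $q(B)$ is negative, i.e.\ exactly what \eqref{eq:almost_exposed} is meant to control. With a free-standing floor $-\eps$ inside the maximum, your final inequality is therefore vacuous.

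\textbf{The active-set claim is also false.} The third entry is active wherever it exceeds both $-\eps$ and $q(B)$, i.e.\ on parts of $\{\log(S/R_0)\le\eps L/N_q\}$, and nothing forces $q(B)\le-\eps$ there. For example, near $t=0$, where $S<R_0$, this entry is positive and is active at points with $q(B)\in(-\eps,0)$. So \eqref{eq:higher_integr2} does not control the negative contribution $-\frac{N_q}{L}|\zeta|^2$ on the active set. In addition, at $t=T$ the untruncated third entry is unbounded above where $f\to0$. Truncating it at $0$ adds a negative singular term to $\mathcal{H}_\zeta\varphi$ on $\{S=R_0\}$.

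\textbf{How to repair it.} Both defects disappear if you put the floor inside the barrier rather than beside it. Take $\varphi:=\max\{m^q,\beta(\log S)\}$, where $m^q=\mathcal{P}_{T-t}(q(v))/S$ as in the paper (or a smoothed $q_\delta(B)$), and $\beta(w):=-\eps-N_q\,\eta\bigl((w-\log R_0)/L\bigr)$ with $\eta$ the cutoff from the paper's proof. Then the four required properties hold:
\begin{enumerate}[(i)]
\item Since $\beta\le-\eps$, the barrier is active only where $q(B)\le m^q\le-\eps$. There $\mathcal{H}_\zeta\beta(\log S)=(\beta'+\beta'')|\zeta|^2\ge-C_\eta N_q|\zeta|^2/L$, which is controlled by \eqref{eq:higher_integr2}.
\item At $t=0$ we have $S_0<R_0$, so $\varphi_0\ge-\eps$, costing $\eps M_T$.
\item On $\{f\ge R\}$ one has $\beta(\log f)=-\eps-N_q<q(B_T)$, so $\varphi_T=q(B_T)$ with no loss.
\item On $\{f<R\}$ the loss is at most $N_q$ pointwise, giving the term $N_qM_{T,R}$.
\end{enumerate}

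This yields \eqref{eq:almost_exposed} by a single comparison argument. It is a genuinely leaner route than the paper's. The paper instead solves the killed equation of Lemma~\ref{lem:heat_reaction} with rate $|\zeta|^2\ONE_{G_\eps}$, uses $m^q\ge-\eps$ on $G_\eps$ together with the mass balance $M_T=E_\tau+K_\tau$, and only afterwards applies the $\log S$-barrier $\Phi$ to the surviving mass on $\{S_\tau\ge R\}$.
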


\begin{proof}
Define
\begin{equation*}
 m^q(t,x):=\frac{\mathcal{P}_{T-t}(q(v))(x)}{S(t,x)}, \qquad
 m^q_t(x) := m^q(t,x).
\end{equation*}
Since $q$ is positively $1$-homogeneous, $|q(v)|\leq C_q|v|\leq C_q f$. Recall that $S_t=\mathcal{P}_{T-t}f$, so that $|m^q|\leq C_q$. We observe, once again by~\eqref{eq:miracle1}, that
\[
  S\mathcal{H}_\zeta m^q = \mathcal{H}(Sm^q)=\mathcal{H}(\mathcal{P}_{T-\frarg}(q(v)))=0.
\]
and hence, since $S>0$ (thanks to infinite speed of propagation), $\mathcal{H}_\zeta m^q=0$.

We now apply Jensen's inequality and the homogeneity of $q$ to obtain
\begin{equation}
\label{eq:m^q_bound}
 m^q\geq\frac{q(\mathcal{P}_{T-\frarg}v)}{S}=q(B).
\end{equation}
As $|B|\leq 1$, we conclude
\[
  m^q\geq-N_q,
\]

Define the measurable function
\[
   k(t,x):=|\zeta(t,x)|^2\ONE_{G_{\eps}}(t,x),
\]
where
\[
   G_{\eps}:=\{q(B)\geq -\eps\}\subseteq(0,T)\times\R^d,
\]
and we also define the time slice of $G_\eps$ as 
\begin{equation*}
    G_{\eps,t}:=\{q(B_t)\geq -\eps\}\subseteq\R^d.
\end{equation*}
We claim that this $k$ satisfies the integrability assumption~\eqref{eq:weight_integrability}. Indeed, we observe that thanks to the properties of the heat flow (and of the heat kernel),
\begin{equation*}
    \nabla S(t,x)=\int_{\R^d}\frac{y-x}{2(T-t)}p_{T-t}(x-y)f(y)\de y.
\end{equation*}
A direct application of Cauchy--Schwarz, writing the integrand as a product of the factors $\frac{y-x}{2(T-t)}\sqrt{p_{T-t}(x-y)f(y)}$ and $\sqrt{p_{T-t}(x-y)f(y)}$, gives 
\begin{equation*}
    |\nabla S(t,x)|^2 \leq S(t,x)\int_{\R^d}\frac{|y-x|^2}{4(T-t)^2}p_{T-t}(x-y)f(y)\de y.
\end{equation*}
For $0<\tau<T$ we then have, since $|a|\leq 1$,
\begin{align*}
\int_0^\tau\int_{\R^d}kaS\de x\de t&\leq\int_0^\tau\int_{\R^d}\frac{|\nabla S|^2}{S}\de x\de t \\
&\leq\int_0^\tau\int_{\R^d}f(y)\int_{\R^d}\frac{|y-x|^2}{4(T-t)^2}p_{T-t}(x-y)\de x\de y\de t \\
&\leq\|f\|_{{\rm L}^1}\int_0^\tau\int_{\R^d}\frac{|z|^2}{4(T-t)^2}p_{T-t}(z)\de z\de t\\
&\leq\frac{d}{2}\|f\|_{{\rm L}^1}\log\bigg(\frac{T}{T-\tau}\bigg)<+\infty,
\end{align*}
where we have used the standard fact that 
\begin{equation*}
    \int_{\R^d}|z|^2p_{T-t}(z)\de z=2d(T-t).
\end{equation*}
This shows our claim.

We can then apply Lemma~\ref{lem:heat_reaction} with the $k$ defined above, to obtain $h$ and $\rho=Sh$ solving~\eqref{eq:forcing_heat} and~\eqref{eq:giga_heat}, respectively. The test function $\varphi := \ONE$ is admissible since $S$ trivially satisfies all the required Gaussian bounds~\eqref{eq:gaussian_bound} ($\tau$ being away from $T$). So,~\eqref{eq:giga_heat} gives
\begin{equation*}
  M_T=E_\tau+K_\tau, \qquad{\rm where}\qquad
  E_\tau:=\int_{\R^d}\rho_\tau\de x,\quad
  K_\tau:=\int_{0}^\tau\int_{\R^d}k\rho\de x\de t.
\end{equation*}
Here we also used that via~\eqref{eq:heat_semigroup} it holds that
\[
  M_{T} = \int_{\R^d} a_T f \de x
  = \int_{\R^d} \ONE_D \mathcal{P}_T f\de x
  = \int_{\R^d} h_0 S_0 \de x
  = \int_{\R^d} \rho_0 \de x
\]
and that $\mathcal{H}_\zeta \ONE = 0$.

Likewise, the test function $\varphi:=m^q$ is also admissible in~\eqref{eq:giga_heat} since $Sm^q=\mathcal{P}_{T-\frarg}(q(v))$ satisfies the Gaussian bounds~\eqref{eq:gaussian_bound}. Using also that $\mathcal{H}_\zeta m^q = 0$ (see above), this yields
\begin{equation*}
    \int_{\R^d}a_Tq(v)\de x
    =\int_{\R^d}m^q_\tau\rho_\tau\de x+\int_0^\tau\int_{\R^d}km^q\rho\de x\de t.
\end{equation*}
Since $k=0$ on $G_\eps^c$,~\eqref{eq:m^q_bound} now gives the lower estimate
\begin{equation*}
    km^q\geq-k\eps,
\end{equation*}
and also
\begin{equation*}
   \int_{\R^d}m^q_\tau\rho_\tau\de x\geq\int_{\R^d}q(B_\tau)\rho_\tau\de x\geq-N_qE_\tau.
\end{equation*}
Combining all these estimates, we arrive at
\begin{equation}
\label{eq:almost_done1}
    \int_{\R^d}a_Tq(v)\de x
    \geq-N_qE_\tau-\eps K_\tau\geq-N_qE_\tau-\eps M_T,
\end{equation}
where we also used that $M_T=E_\tau+K_\tau\geq K_\tau$.

It remains to estimate $E_\tau$. For this, let $\eta\in {\rm C}^\infty(\R;[0,1])$ be non-decreasing and such that $\eta\equiv 0$ on $(-\infty,0]$ and $\eta\equiv 1$ on $[1,\infty)$. Set $J:=\log(R/R_0)>1$ and
\begin{equation*}
  \Phi(t,x):=\eta\bigg(\frac{\log\big(S(t,x)/R_0\big)}{J}\bigg).
\end{equation*}
Thanks to the properties of $\eta$ and to the fact that $|S_0|\leq\|\mathcal{P}_Tf\|_{{\rm L}^\infty}\leq R_0$ we get $\Phi(0,\frarg)=0$, while $\Phi\equiv 1$ on $\{S\geq R\}$. By~\eqref{eq:heat_identities} and careful computations one gets
\begin{equation*}
\mathcal{H}_\zeta\Phi= \eta^\prime\frac{\mathcal{H}S}{JS}-\eta^\prime\frac{|\nabla S|^2}{JS^2}+2\eta^\prime\frac{|\nabla S|^2}{JS^2}+\eta^{\prime\prime}\frac{|\nabla S|^2}{J^2S^2}=
0 + \bigg(\frac{\eta^\prime}{J}+\frac{\eta^{\prime\prime}}{J^2}\bigg)|\zeta|^2.
\end{equation*}
where we have abbreviated
\begin{equation*}
  \eta' := \eta^\prime \bigg(\frac{\log\big(S/R_0\big)}{J}\bigg),  \qquad
  \eta' := \eta^{\prime\prime}\bigg(\frac{\log\big(S/R_0\big)}{J}\bigg).
\end{equation*}
Thus,
\begin{equation*}
    |\mathcal{H}_\zeta\Phi|\leq\frac{C_\eta}{J}|\zeta|^2.
\end{equation*}

Since $S(t,x)\to 0$ as $|x|\to\infty$, uniformly in $[0,\tau]\times\R^d$, there exists a compact $K\subseteq\R^d$ so that $S(t,x)\leq R_0$ in $[0,\tau]\times K^c$. Hence, $\Phi$ has compact support inside $[0,\tau]\times\R^d$ and it is therefore an admissible test function in~\eqref{eq:giga_heat}, yielding
\begin{equation*}
    \int_{\R^d}\Phi_\tau\rho_\tau\de x+\int_0^\tau\int_{\R^d}k\Phi \rho\de x\de t=\int_0^\tau\int_{\R^d}\rho \mathcal{H}_\zeta\Phi\de x\de t.
\end{equation*}
We therefore get, discarding terms and using the properties of $\Phi$,
\begin{equation*}
    \int_{\{S_\tau\geq R\}}\rho_\tau\de x\leq \int_{\R^d}\Phi_\tau\rho_\tau\de x\leq\int_0^\tau\int_{\R^d}\rho \mathcal{H}_\zeta\Phi\de x\de t\leq\frac{C_\eta}{J}\int_0^\tau\int_{\R^d}\rho|\zeta|^2\de x\de t.
\end{equation*}
On the set $G_{\eps,t}^c=\{q(B_t)<-\eps\}$ we can use~\eqref{eq:higher_integr2}, along with $\rho=hS\leq aS$ (which is a consequence of the fact that $0\leq h\leq a$, see the proof of Lemma \ref{lem:heat_reaction}), to get
\begin{equation*}
  \frac{C_\eta}{J}\int_0^\tau\int_{G_{\eps,t}^c}\rho|\zeta|^2\de x\de t\leq\frac{C_\eta}{J}\int_0^\tau\int_{G_{\eps,t}^c}aS|\zeta|^2\de x\de t\leq \frac{C_{\mathcal{A},q,\eps,\eta}}{J}M_T,
\end{equation*}
where we absorbed constants. On the set $G_{\eps,t}$ instead we have
\begin{equation*}
    \frac{C_\eta}{J}\int_0^\tau\int_{G_{\eps,t}}\rho|\zeta|^2\de x\de t=\frac{C_\eta}{J}\int_0^\tau\int_{\R^d}k\rho\de x\de t\leq\frac{C_\eta}{J} K_\tau\leq\frac{C_\eta}{J} M_T.
\end{equation*}
Combining, we have
\begin{equation*}
    \int_{\{S_\tau\geq R\}}\rho_\tau\de x\leq C\frac{M_T}{J}.
\end{equation*}

For the integral on ${\{S_\tau<R\}}$ we instead get, using $\rho \leq aS$ again,
\begin{equation*}
    \int_{\{S_\tau<R\}}\rho_\tau\de x\leq\int_{\{S_\tau<R\}}a_\tau S_\tau\de x. 
\end{equation*}
Observe next that $S_\tau\to f$ uniformly and in ${\rm L}^1$ as $\tau\nearrow T$ since $f$ is Lipschitz and compactly supported. As a consequence, we also have $a_\tau S_\tau\to a_Tf$ in ${\rm L}^1$ as $\tau\nearrow T$. Due to uniform convergence we can fix $\delta$ and for all $\tau$ close enough to $T$ we get $\{S_\tau<R\}\subseteq\{f<R+\delta\}$, whence letting first $\tau\nearrow T$ and then $\delta\searrow 0$ we get 
\begin{equation*}
  \limsup_{\tau\nearrow T}\int_{\{S_\tau<R\}}a_\tau S_\tau\de x\leq M_{T,R}.  
\end{equation*}
Combining these inequalities into~\eqref{eq:almost_done1} gives~\eqref{eq:almost_exposed} and proves the result.
\end{proof}

To apply the preceding arguments to our situation, we need to extend a sequence as in Definition~\ref{def:pure_concentrations} to the whole space, for which we will use the following lemma.

\begin{lemma} \label{lem:extension_lemma}
Let $(u_j)_j$ be a sequence as in Definition~\ref{def:pure_concentrations} and assume that $|u_j|\toweakstar\lambda\in\mathcal{M}^+(\overline{Q})$ with
\[
  \lambda(\partial Q_s)=0.
\]
Let $\chi_s\in {\rm C}^\infty_c(Q;[0,1])$ be a smooth cutoff with $\chi_s\equiv 1$ on $Q_{s}$ and define
\[
  v_j:=\chi_su_j\in {\rm C}^\infty_c(\R^d;V).
\]
Then, there exists $C=C(\mathcal{A},s)$ such that 
\begin{equation}
    \label{eq:estimate1}
    \|v_j\|_{{\rm L}^1} + \|\mathcal{A}v_j\|_{{\rm L}^1} \leq C,\qquad j\in\N,
\end{equation}
and
\[
  v_j,\mathcal{A}v_j \to 0 \quad\text{in measure.}
\]
Moreover, with
\begin{equation}
    \label{eq:def_E_T}
    E_{j,T}:=\int_{Q_s}(1-a_T)|u_j|\de x+\int_{Q_s^c}a_T|v_j|\de x
\end{equation}
it holds that
\begin{equation}
    \label{eq:E_T_to_zero}
    \lim_{T\to 0}\limsup_{j\to\infty}E_{j,T}=0
\end{equation}
and
\begin{equation}
    \label{eq:crucial_ineq1}
    \bigg|\int_{\R^d}a_Tq(v_j)\de x-\int_{Q_s}q(u_j)\de x\bigg|\leq \Bigl(\max_{|z|=1}|q(z)|\Bigr) E_{j,T},
\end{equation}
for all $q:V\to\R$ which are continuous and positively $1$-homogeneous.
\end{lemma}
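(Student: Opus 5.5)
The plan is to treat the four assertions separately, each by elementary estimates combined with the hypotheses $\|u_j\|_{{\rm L}^1}=1$, $u_j\to 0$ in measure, and $|u_j|\toweakstar\lambda$ with $\lambda(\partial Q_s)=0$.

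For~\eqref{eq:estimate1} we use the Leibniz rule together with $\mathcal{A}u_j=0$ in $Q$:
\[
  \mathcal{A}v_j=\chi_s\mathcal{A}u_j+\mathbb A(\nabla\chi_s)u_j=\mathbb A(\nabla\chi_s)u_j .
\]
Hence
\[
  \|\mathcal{A}v_j\|_{{\rm L}^1}\leq C_\mathcal{A}\|\nabla\chi_s\|_\infty\|u_j\|_{{\rm L}^1}=C(\mathcal{A},s),
  \qquad \|v_j\|_{{\rm L}^1}\leq 1 .
\]
Both $v_j$ and $\mathcal{A}v_j$ are pointwise bounded by a constant times $|u_j|\ONE_Q$, so they inherit convergence to zero in measure from $u_j$ (outside $Q$ they vanish identically).

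For~\eqref{eq:crucial_ineq1} we split $\R^d=Q_s\cup Q_s^c$. On $Q_s$ we have $v_j=u_j$, so
\[
  \int_{Q_s}a_Tq(v_j)\de x-\int_{Q_s}q(u_j)\de x=-\int_{Q_s}(1-a_T)q(u_j)\de x .
\]
By homogeneity, $|q(z)|\leq \max_{|z|=1}|q|\,|z|$, and $0\leq a_T\leq 1$, so this term is bounded by $\max|q|$ times the first summand of $E_{j,T}$. The remaining piece $\int_{Q_s^c}a_Tq(v_j)\de x$ is bounded by $\max|q|$ times the second summand. This gives~\eqref{eq:crucial_ineq1}.

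The main point is~\eqref{eq:E_T_to_zero}. First let $j\to\infty$ at fixed $T$, using weak* convergence. The function $1-a_T$ is continuous and bounded. Moreover $|v_j|\leq|u_j|$ is supported in $\operatorname{supp}\chi_s\subseteq Q$, a compact set. Since $\lambda(\partial Q_s)=0$, weak* convergence of $|u_j|\mathscr L^d$ to $\lambda$ gives convergence of integrals of continuous functions over $Q_s$ and over $\overline Q\setminus Q_s$ (portmanteau-type argument, sets with $\lambda$-null boundary). We then obtain
\[
  \limsup_{j\to\infty}E_{j,T}\leq\int_{\overline{Q_s}}(1-a_T)\de\lambda+\int_{\overline Q\setminus Q_s}a_T\de\lambda .
\]
A technical subtlety is the behaviour near $\partial Q$ in the second integral. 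This is handled because $|v_j|=\chi_s|u_j|$ and $\chi_s$ is continuous with compact support in $Q$, so we simply bound by $\int a_T\chi_s\de\lambda$ over $Q_s^c$ with $\chi_s a_T$ continuous.

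Next let $T\to 0$. We have $a_T=\mathcal{P}_T\ONE_{Q_s}\to \ONE_{Q_s}$ pointwise on $\R^d\setminus\partial Q_s$, with $0\leq a_T\leq 1$. Dominated convergence with respect to the finite measure $\lambda$, together with $\lambda(\partial Q_s)=0$, then sends both integrals to zero. I expect the only delicate step to be justifying this passage of weak* limits over $Q_s$ versus its complement. If a direct argument is awkward, I would approximate $\ONE_{Q_s}$ from above and below by continuous functions and use the null-boundary hypothesis.
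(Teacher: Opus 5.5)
Your proof is correct. For \eqref{eq:estimate1}, the convergence in measure, and \eqref{eq:crucial_ineq1} it is the same as the paper's argument; the only real difference is in how you prove \eqref{eq:E_T_to_zero}.

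\textbf{The paper's route.} It uses the Gaussian tail bounds
\[
a_T(x)\leq C{\rm e}^{-{\rm dist}(x,\partial Q_s)^2/(CT)} \text{ off } Q_s,
\qquad
1-a_T(x)\leq C{\rm e}^{-{\rm dist}(x,\partial Q_s)^2/(CT)} \text{ on } Q_s.
\]
These dominate the whole of $E_{j,T}$ by $C\int_Q {\rm e}^{-{\rm dist}(x,\partial Q_s)^2/(CT)}|u_j|\de x$. This weight is a single function, continuous on $\overline Q$, so plain weak* convergence handles $j\to\infty$. The hypothesis $\lambda(\partial Q_s)=0$ is used only once, when the weight tends to $\ONE_{\partial Q_s}$ as $T\to 0$.

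\textbf{Your route.} You keep the sharp cutoffs $\ONE_{Q_s}$ and $\ONE_{Q_s^c}$ and pass to the limit in $j$ with the portmanteau theorem. This is legitimate here: $\overline Q$ is compact, so testing with the constant $1$ gives $\lambda(\overline Q)=1$, and weak* convergence is narrow convergence. For $T\to 0$ you then need only $0\leq a_T\leq 1$, the pointwise convergence $a_T\to\ONE_{Q_s}$ off $\partial Q_s$, and dominated convergence. The null-boundary hypothesis thus enters in both limits.

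\textbf{Comparison.} Your argument needs no quantitative heat-kernel estimate. The paper's argument trades the portmanteau theorem for the Gaussian tail bound, so that only testing against one continuous function is required.

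The ``subtlety near $\partial Q$'' you flag is not actually an issue. $a_T$ is continuous on $\overline Q$ and the convergence takes place in $\mathcal{M}(\overline Q)$, so any mass of $\lambda$ on $\partial Q$ is harmless. Moreover $a_T\to 0$ there as $T\to 0$. Inserting $\chi_s$ is therefore correct but unnecessary.
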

\begin{proof}
We have $|v_j|\leq |u_j|$ and $\mathcal{A}v_j=\sum_{k=1}^dA_ku_j \, \partial_k\chi_s$, whereby
\begin{equation*}
|\mathcal{A}v_j|\leq d\max_{k}\bigl\{|A_k| \cdot \|\partial_k\chi_s\|_{{\rm L}^\infty}\bigr\}|u_j|.
\end{equation*} 
From this, inequality~\eqref{eq:estimate1} follows, together with the convergence to zero in measure.

Now we can use the Gaussian estimates of the heat kernel to infer the existence of a dimensional constant $C>0$ such that
\begin{equation*}
a_T(x)\leq C{\rm e}^{-{\rm dist}(x,\partial Q_s)^2/(CT)}\;\text{for $x\notin Q_s$},\qquad 1-a_T(x)\leq C{\rm e}^{-{\rm dist}(x,\partial Q_s)^2/(CT)}\;\text{for $x\in Q_s$}.
\end{equation*}
Indeed, by classical Gaussian estimates we have for $x\in Q_s$ that
\begin{equation*}
    1-a_T(x)=\int_{Q_s^c}p_T(x-y)\de y\leq\int_{\{|z|\geq{\rm dist}(x,\partial Q_s)\}}p_T(z)\de z\leq C{\rm e}^{-{\rm dist}^2(x,\partial Q_s)/(CT)}
\end{equation*}
and similarly for $a_T$. We now apply the previous bounds in~\eqref{eq:def_E_T} to get
\begin{equation*}
   |E_{j,T}|\leq C\int_{Q}{\rm e}^{-{\rm dist}(x,\partial Q_s)^2/(CT)}|u_j(x)| \de x.
\end{equation*}
Taking the $\limsup$ as $j\to\infty$ and making use of the convergence of $|u_j|$ to $\lambda$, we get
\begin{equation*}
    \limsup_{j\to\infty} |E_{j,T}|\leq C\int_{Q}{\rm e}^{-{\rm dist}(x,\partial Q_s)^2/(CT)}\de\lambda(x).
\end{equation*}
Finally, we send $T\searrow 0$ and use that ${\rm e}^{-{\rm dist}(x,\partial Q_s)^2/(CT)}$ converges to $\ONE_{\partial Q_s}$ in conjunction with $\lambda(\partial Q_s)=0$ to arrive at~\eqref{eq:E_T_to_zero}.

Setting $C_q := \max_{|z|=1}|q(z)|$, we also have
\begin{equation*}
    \bigg|\int_{\R^d}a_Tq(v_j)\de x-\int_{Q_s}q(u_j)\de x\bigg|\leq C_q\int_{Q_s}(1-a_T)|u_j|\de x+C_q\int_{Q_s^c}a_T|v_j|\de x=C_qE_{j,T},
\end{equation*}
where we used the fact that $u_j=v_j$ on $Q_s$ and that $q$ is positively $1$-homogeneous. This is~\eqref{eq:crucial_ineq1}.
\end{proof}

We are now ready to establish the main result of this section.

\begin{proof}[Proof of Proposition~\ref{pro[:compensated_compactness_convex}]
Define 
\begin{equation*}
    f_{j,T}:=|v_j|+\sqrt{T}|\mathcal{A}v_j|,
\end{equation*}
where $(v_j)_j$ is as in Lemma~\ref{lem:extension_lemma}.
Thanks to~\eqref{eq:estimate1}, $\|f_{j,T}\|_{{\rm L}^1}\leq C$ uniformly in $j$ and moreover $f_{j,T} \to 0$ in measure as $j \to \infty$. The classical ${\rm L}^1$-to-${\rm L}^\infty$ estimate for the heat flow (H\"older's inequality) says that there exists $R_0>0$ (depending on $T$) such that
\begin{equation*}
    \sup_{j\in\N}\|\mathcal{P}_Tf_{j,T}\|_{{\rm L}^\infty}<R_0.
\end{equation*}
Then we have
\begin{equation*}
   \int_{\{f_{j,T}\leq R\}}a_Tf_{j,T}\de x\leq \int_{\{f_{j,T}\leq R\}}f_{j,T}\de x\to 0 \qquad\text{as $j\to\infty$.}
\end{equation*}
Potentially adjusting $R_0$ so that $\|S_0\|_{{\rm L}^\infty}=\|\mathcal{P}_Tf\|_{{\rm L}^\infty}<R_0$, we now apply Proposition~\ref{prop:almost_exposed}, obtaining the lower bound~\eqref{eq:almost_exposed}, in which we then send $j\to\infty$ for fixed $s,T,R,\eps$. This yields the estimate
\begin{equation*}
   \liminf_{j\to\infty}\int_{\R^d}a_Tq(v_j)\de x\geq-\eps\sup_jM_{j,T}-CN_q\frac{\sup_jM_{j,T}}{\log(R/R_0)}.
\end{equation*}
Now let $R\to\infty$ and $\eps\searrow 0$ to get 
\begin{equation*}
    \liminf_{j\to\infty}\int_{\R^d}a_Tq(v_j)\geq 0.
\end{equation*}
We then combine the latter with~\eqref{eq:crucial_ineq1} and~\eqref{eq:E_T_to_zero} and let $T \to 0$ to arrive at
\begin{equation} \label{eq:almost_lsc}
\liminf_{j\to\infty}\int_{Q_s}q(u_j)\de x\geq 0.
\end{equation}

On the other hand, using the convexity and the one-homogeneity of $q$, Jensen's inequality allows us to infer that
\begin{equation*}
   \int_{Q\setminus Q_s}q(u_j)\de x\geq q(u_j(\overline{Q}\setminus Q_s)).
\end{equation*}
Let us assume that $|u_j|\toweakstar\lambda\in\mathcal{M}^+(\overline{Q})$, which always holds up to selecting a subsequence (and if we select this first, the $\liminf$ is unaffected). For all but countably many $s\in(0,1/2)$ we have $\lambda(\partial Q_s)=0$. Choosing such values of $s$, from~\eqref{eq:almost_lsc} in conjunction with standard results in measure theory (see, e.g.,~\cite[Proposition 1.62]{AFP00}) we get
\begin{equation*}
  \liminf_{j\to\infty}\int_{Q}q(u_j)\de x\geq q(u(\overline{Q}\setminus Q_s)).
\end{equation*}
Finally, since $|u|(\partial Q)=0$ (see~(iv) in Definition~\ref{def:pure_concentrations}) and $q(0)=0$, we may take the limit as $s\searrow 0$ along these good values of $s$, concluding that
\[
  \liminf_{j\to\infty}\int_{Q}q(u_j)\de x\geq 0,
\]
from which the claim of Proposition~\ref{pro[:compensated_compactness_convex} follows.
\end{proof}

\section{Proofs of the main results and the applications} \label{sec:proofs}

\begin{proof}[Proof of Theorem~\ref{thm:vanishing_mass}]
Consider $\mu\in\mathcal{Y}_\mathcal{A}$ and let $q:V\to\R$ be any barrier function for $\Lambda_\mathcal{A}$, that is, $q$ is assumed to be convex, positively $1$-homogeneous, and non-negative on $\Lambda_\mathcal{A}$. Apply Proposition~\ref{pro[:compensated_compactness_convex} to obtain 
\begin{equation*}
  \int_{\mathbb{S}_V}q\de\mu
  \geq \liminf_{j\to\infty}\int_{Q}q(u_j)\de x
  \geq 0,
\end{equation*}
Thanks to Proposition~\ref{prop:convex_dicomoty}, applied with $\Gamma=\Lambda_{\mathcal{A}}$, we then get that 
\begin{equation*}
    \mu\in \overline{\rm co}^{w*}\, \mathcal{W}_{\Lambda_{\mathcal{A}}} = \overline{\rm co}^{w*}\big\{\mu\in\mathcal{M}^1(\mathbb{S}_V) \;:\; [\mu]\in\Lambda_{\mathcal{A}}\big\}.
\end{equation*}
Finally, applying Proposition~\ref{prop:Choquet_repr} yields the decomposition~\eqref{thm:vanishing_mass}.

The precise statement of Conjecture~\ref{conj:vmc} follows by disintegrating $\pi({\rm d} \sigma) \otimes \sigma$ with respect to the map $\sigma \mapsto [\sigma]$.
\end{proof}

\begin{proof}[Proof of Theorem~\ref{thm:vanishing_mass2}]
We already know from Proposition~\ref{prop:Young_convexity} that the set $\mathcal{Y}_{\mathcal{A}}$ is convex. Moreover, Theorem~\ref{thm:vanishing_mass} tells us that $\mathcal{Y}_\mathcal{A}\subseteq\overline{\rm co}^{w*}\, \mathcal{W}_{\Lambda_{\mathcal{A}}}$. In the following we will show also the opposite inclusion, which in particular implies the additional claim of the theorem. For this consider the subset $\mathcal{Y}_\mathcal{A}^{\rm unif}\subseteq\mathcal{Y}_\mathcal{A}$ of those Young measures $\mu$ generated by a sequence $(u_j)_j$ as in Definition~\ref{def:pure_concentrations} satisfying the additional constraint that $|u_j|\toweakstar\mathscr L^d \mres Q$ (a ``uniform'' concentration).

Let $\mu\in \mathcal{W}_{\Lambda_{\mathcal{A}}}$. We will show that $\mu\in\mathcal{Y}_\mathcal{A}^{\rm unif}$. Consider any $f:V\to\R$ that is $\Lambda_{\mathcal{A}}$-convex and has linear-growth, that is, $|f(\xi)| \leq C(1+|\xi|)$ for all $\xi \in V$, and such that its (strong) recession function
\[
  f^\infty(\xi) := \lim_{\substack{\!\!\!\! \xi' \to \xi \\ \; t \to \infty}} \frac{f(t\xi')}{t}
\]
exists. Under the present assumptions on the operator $\mathcal{A}$ we have that $f^\infty$ is convex at points in $\Lambda_{\mathcal{A}}$, see~\cite[Theorem 1.1]{KK16} (this is reproduced with a streamlined proof in~\cite[Theorem 13.17]{R26}). In particular, Jensen's inequality holds for any probability measure with barycenter in $\Lambda_{\mathcal{A}}$.

By the main result of~\cite{KR22} or~\cite{AR21} (for $\mathcal{A} = {\rm curl}$, this was already proved in \cite{KR10}), to prove that $\mu\in\mathcal{Y}_\mathcal{A}^{\rm unif}$ we are left to verify the generalized Jensen inequalities for the (generalized) Young measure $(\delta_0,\mu,\mathscr L^d\mres\Omega)$; see Chapter~15 in~\cite{R26} for the necessary terminology. Hence, we need to verify that 
\begin{equation*}
    f([\mu])\leq f(0)+\int_{\mathbb S_V} f^\infty\de\mu.
\end{equation*}
Shifting $f$ by an additive constant (which affects neither the $\Lambda_\mathcal{A}$-convexity nor the recession function), we may additionally require that $f(0) = 0$. Therefore, we just need to check the inequality
\begin{equation*}
    f([\mu])\leq\int_{\mathbb S_V} f^\infty\de\mu,
\end{equation*}
for those $f$. Now, for $t\geq 1$, we have
\begin{equation*}
   f([\mu])= f\bigl(t^{-1}(t[\mu])+(1-t^{-1})0\bigr)\leq\frac{1}{t}f(t[\mu]),
\end{equation*}
whence 
\begin{equation*}
   f([\mu])\leq f^\infty([\mu]).
\end{equation*}
Applying Jensen's inequality at $[\mu] \in \Lambda_{\mathcal{A}}$ we then get
\begin{equation*}
   f([\mu])\leq f^\infty([\mu])\leq\int_{\mathbb S_V} f^\infty\de\mu.
\end{equation*}
As this holds for all $f$ as above, the characterization result mentioned above then allows us to conclude that $\mu\in\mathcal{Y}_\mathcal{A}^{\rm unif}\subseteq\mathcal{Y}_\mathcal{A}$. Thus, $\overline{\rm co}^{w*}\, \mathcal{W}_{\Lambda_{\mathcal{A}}}\subseteq\mathcal{Y}_\mathcal{A}$ thanks to Lemma~\ref{lem:hom_weak_star_closed} and Proposition~\ref{prop:Young_convexity}, so that ultimately $\overline{\rm co}^{w*}\, \mathcal{W}_{\Lambda_{\mathcal{A}}} = \mathcal{Y}_\mathcal{A}$.
\end{proof}

\begin{proof}[Proof of Theorem~\ref{thm:giancarlo}]
As in the proof of~\cite[Theorem 1.1]{DPR16}, set 
\begin{equation*}
    P(x):=\frac{\de\mu}{\de|\mu|}(x).
\end{equation*}
Choose $x_0\in \Omega$ and a sequence $r_j\to 0$ such that 
\begin{enumerate}[(i)]
    \item $\displaystyle \lim_{j\to\infty}\frac{|\mu|^a(B_{r_j}(x_0))}{|\mu|^s(B_{r_j}(x_0))}=0.$
    \item $\displaystyle \lim_{j\to\infty}\frac{1}{|\mu|^s(B_{r_j}(x_0))}\int_{B_{r_j}(x_0)}|P-P(x_0)|\de|\mu|^s=0.$
    \item There exists a positive Radon measure $\nu\in\mathcal{M}(\R^d)$ such that $\nu \mres B_{1/2}(0) \neq 0$ and
    \begin{equation*}
        \nu_j:=\frac{T^{x_0,r_j}_\sharp|\mu|^s}{|\mu|^s(B_{r_j}(x_0))}\toweakstar\nu\quad{\rm as}\;j\to\infty,
    \end{equation*}
   where $T^{x_0,r_j}(x):=(x-x_0)/r_j$.
\end{enumerate}
As detailed in \emph{loc.\ cit.}, $|\mu|^s$-almost every $x_0$ satisfies these conditions. Now define 
\begin{equation*}
    \sigma_j:=\frac{T^{x_0,r_j}_\sharp\mu}{|\mu|^s(B_{r_j}(x_0))}
\end{equation*}
and observe that the homogeneity of $\mathcal{A}$ implies that 
\begin{equation}
\label{eq:fausto}
    \mathcal{A}\sigma_j=0\quad{\rm in}\;B_1.
\end{equation}
We then have, with $P_0 := P(x_0)$,
\begin{equation}
\label{eq:L1_close}
    |\sigma_j-P_0\nu_j|(B_1)\leq\frac{1}{|\mu|^s(B_{r_j}(x_0))}\int_{B_{r_j}(x_0)}|P-P_0|\de|\mu|^s+\frac{|\mu|^a(B_{r_j}(x_0))}{|\mu|^s(B_{r_j}(x_0))},
\end{equation}
and the right-hand side converges to zero as $j\to\infty$ due to the previous considerations.

Now consider a cube  $Q$ inside $B_1$ and without loss of generality assume that $ \nu(Q)>0$ and $\nu_j(\partial Q)=\nu(\partial Q)=0$ for all $j\in\N$. Then let $\rho_\eps$ be a standard mollifier and observe that due to the fact that $\nu_j$ is singular with respect to $\mathscr L^d$ the convolution $v_{j,\eps}:=\rho_\eps\ast\nu_j$ satisfies $v_{j,\eps} \to 0$ in measure on $Q$ as $\eps\to 0$. Moreover, by the properties of the mollification we have that for every $j \in \N$,
\begin{equation*}
    \|v_{j,\eps}\|_{{\rm L}^1(Q)}\to|\nu_j|(Q),\qquad \text{as $\eps \searrow 0$.}
\end{equation*}
We can therefore choose a sequence $\eps_j\searrow 0$ such that the sequence $v_j:=\rho_{\eps_j}\ast\nu_j$, $j \in \N$, converges to zero in measure in $Q$ and converges strictly to $\nu$ in $Q$ (see~\cite[Lemmas~13.4, 13.6]{R26} for the details). Consider the sequence $u_j := \rho_{\eps_j}\ast\sigma_j$, $j \in \N$. Thanks to~\eqref{eq:fausto},~\eqref{eq:L1_close} and the properties of $v_j$ we get that $u_j:Q\to V$  (once normalized by its ${\rm L}^1$-norm, which we assume without loss of generality) is admissible in Definition~\ref{def:pure_concentrations}. Using the Reshetnyak Continuity Theorem (see~\cite[Theorem~13.3]{R26}), we therefore get that for all $\psi\in {\rm C}(\mathbb S^{m-1})$ it holds that
\begin{equation*}
    \int_{\mathbb S^{m-1}}\psi\de\Theta_{u_j}\to\psi(P_0),
\end{equation*}
implying that $\Theta_{u_j}\toweakstar\mu=\delta_{P_0}$ in $\mathcal{M}^1(\mathbb S^{m-1})$.

Applying Theorem~\ref{thm:vanishing_mass} we get that there exists $\pi\in\mathcal{M}^1(\mathcal{W}_{\Lambda_{\mathcal{A}}})$ such that 
\begin{equation*}
\delta_{P_0}=\int_{\mathcal{W}_{\Lambda_{\mathcal{A}}}}\nu\de\pi(\nu).
\end{equation*}
But the only possibility for such a decomposition is that $\pi=\delta_{\delta_{P_0}}$, with $[\delta_{P_0}]=P_0\in\Lambda_{\mathcal{A}}$, thus proving the result.
\end{proof}

\begin{proof}[Proof of Theorem~\ref{thm:cone}]
We start by observing that there exists a direction $e_0\in\mathbb S_V$ and a constant $\gamma>0$ such that 
\begin{equation*}
    \langle y,e_0\rangle\geq\gamma|y|,\qquad y\in K.
\end{equation*}
For a general $z\in V$ we may thus choose $y=y(z)\in K$ such that 
\begin{equation*}
    |y-z|={\rm dist}(z,K),
\end{equation*}
and estimate as follows:
\begin{equation}
\label{eq:crucial_geo_ineq}
   \langle z,e_0\rangle=\langle y,e_0\rangle+\langle z-y,e_0\rangle\geq\gamma|y|-{\rm dist}(z,K)\geq\gamma|z|-(1+\gamma){\rm dist}(z,K),
\end{equation}
where we also used that $|y|\geq |z|-|y-z|$.

Suppose that $\Omega^\prime\subseteq U\subseteq\Omega$. Then we claim that
\begin{equation}
\label{eq:higher_int_claim}
\lim_{\delta\to 0}\limsup_{j\to\infty}\sup_{E\subseteq\Omega^\prime,\mathscr L^d(E)\leq\delta}|\mu_j|(E)=0.
\end{equation}
Indeed, if this were not true, there would exist a constant $\eps>0$, a subsequence of $j$'s (not relabeled), and compact sets $E_j\subseteq\Omega^\prime$ such that $\mathscr{L}^d(E_j)\to 0$ as $j\to\infty$, but for which $|\mu_j|(E_j)\geq \eps$. Then pick functions $\varphi_j\in {\rm C}_c^\infty(U)$ such that $\varphi_j\equiv 1$ on $E_j$, $0\leq\varphi_j\leq 1$, and $\mathscr L^d(\{\varphi_j>0\})\to 0$ as $j\to\infty$ (one can do this for example by slightly enlarging $E_j$ and mollifying $\ONE_{E_j}$).

We now choose a sequence $\eps_j\searrow 0$ such that for $u_j:=\mu_j\ast\rho_{\eps_j}$ (where $(\rho_\eps)_{\eps>0}$ is a standard mollifier family) it holds that 
\begin{equation}
\label{eq:strict_closedness}
    \int_U\varphi_j\, |\langle e_0,u_j\rangle|\de x\geq\int_{U}\varphi_j\biggl|\biggl\langle e_0,\frac{\de\mu_j}{\de|\mu_j|}\biggr\rangle\biggr|\de|\mu_j|-\frac{1}{j}
\end{equation}
and $\sup_{j\in\N}\|u_j\|_{{\rm L}^1(U)}<+\infty$. This is possible because the mollification of a measure converges strictly to it, as the mollification parameter tends to zero. Moreover, for $j$ large enough we have $\mathcal{A}u_j=0$ in $U$.

Additionally, since the map $z\mapsto{\rm dist}(z,K)$ is convex (as $K$ is a convex set) and $1$-homogeneous, we also get for $x\in U$ (applying Jensen's and Young's convolution inequality) that
\begin{align*}
    \|{\rm dist}(u_j,K)\|_{{\rm L}^1(U)}
    &\leq\int_{\Omega}\rho_{\eps_j}(x-y)\, {\rm dist}\biggl(\frac{\de\mu_j}{\de|\mu_j|}(y),K\biggr)\de|\mu_j|(y) \\
    &\leq \|\rho_{\eps_j}\|_{{\rm L}^1} \int_{\Omega}{\rm dist}\biggl(\frac{\de\mu_j}{\de|\mu_j|}(y),K\biggr)\de|\mu_j|(y)
\end{align*}
whence ${\rm dist}(u_j,K)\to 0$ in ${\rm L}^1(U)$ thanks to~\eqref{eq:distance_from_K}.

Assume now by contradiction that the sequence $(u_j)_j$ defined above is not equi-integrable in $\Omega^\prime$. Then by Lemma~\ref{lem:concentration_extraction} below there exists $\nu\in\mathcal{Y}_\mathcal{A}$ such that ${\rm supp}\, \nu\subseteq K\cap\mathbb S_V$. Indeed, the latter inclusion follows by choosing $q(z):={\rm dist}(z,K)$ in~\eqref{eq:67}. We can now apply Theorem~\ref{thm:vanishing_mass} to find a $\pi\in\mathcal{M}^1(\mathcal{W}_{\Lambda_{\mathcal{A}}})$ with
\begin{equation*}
\nu=\int_{\mathcal{W}_{\Lambda_{\mathcal{A}}}}\sigma\de\pi(\sigma).
\end{equation*}
Combining the previous facts, we get
\begin{equation*}
    0=\nu(\mathbb S_V\setminus K)=\int_{\mathcal{W}_{\Lambda_{\mathcal{A}}}}\sigma(\mathbb S_V\setminus K)\de\pi(\sigma).
\end{equation*}
Thus, ${\rm supp}\, \sigma\subseteq K\cap\mathbb S_V$ for $\pi$-almost every $\sigma$. Then, the convexity of $K$ forces $[\sigma]\in K$, while at the same time one has $[\sigma]\in\Lambda_{\mathcal{A}}$. Via the assumption that $K \cap \Lambda_{\mathcal{A}} = \{0\}$, the only possibility for both of these conditions to be true at the same time is $[\sigma]=0$. On the other hand we have, by definition of $K$,
\begin{equation*}
    \langle e_0,[\sigma]\rangle=\int_{\mathbb{S}_V}\langle e_0,z\rangle\de\sigma(z)\geq\gamma>0,
\end{equation*}
which is a contradiction. Therefore, the sequence $(u_j)_j$ is equi-integrable.

Applying~\eqref{eq:crucial_geo_ineq} to $\frac{\de\mu_j}{\de|\mu_j|}$, we then get 
\begin{equation*}
    \biggl|\biggl\langle e_0,\frac{\de\mu_j}{\de|\mu_j|}\biggr\rangle\biggr|
    \geq\gamma-(1+\gamma){\rm dist}\bigg(\frac{\de\mu_j}{\de|\mu_j|},K\bigg),
\end{equation*}
which together with~\eqref{eq:strict_closedness} gives (multiplying by $\varphi_j$ and integrating over $U$)
\begin{equation*}
     \int_U\varphi_j\, |\langle e_0,u_j\rangle|\de x\geq \gamma\int_U\varphi_j\de|\mu_j|-(1+\gamma) \int_U{\rm dist}\bigg(\frac{\de\mu_j}{\de|\mu_j|},K\bigg)\de|\mu_j|-\frac{1}{j},
\end{equation*}
for all $j\in\N$. Taking the lower limit as $j\to\infty$ and using~\eqref{eq:distance_from_K} together with
\begin{equation*}
    \liminf_{j\to\infty}\int_U\varphi_j\, |\langle e_0,u_j\rangle|\de x\leq\lim_{j\to\infty}\int_{\{\varphi_j>0\}}|u_j|\de x=0,
\end{equation*}
which follows from Vitali's convergence theorem due to the equi-integrability of the sequence $(u_j)_j$, leads to
\[
  0\geq\gamma\liminf_{j\to\infty}|\mu_j|(E_j)\geq\gamma\eps,
\]
a contradiction. Therefore, the claim~\eqref{eq:higher_int_claim} holds.

Now we decompose $\mu_j=g_j\mathcal L^d+\mu_j^s$ and set $N:=\bigcup_{j\in\N}N_j$, where $N_j\subseteq\Omega^\prime$ is such that $\mu_j^s$ is concentrated on $N_j$ and $\mathscr L^d(N_j)=0$. This gives $\mathscr L^d(N)=0$ and so from~\eqref{eq:higher_int_claim} we infer
\begin{equation*}
    |\mu_j|^s(\Omega^\prime)\leq|\mu_j|(N)\to 0\quad{\rm as}\;j\to\infty.
\end{equation*}
Moreover, we have
\begin{equation*}
    \int_{B}|g_j|\de x\leq|\mu_j|(B),\qquad \text{$B\subseteq\Omega^\prime$ Borel,}
\end{equation*}
so that the family $(g_j)_j$ is equi-integrable, again by the claim~\eqref{eq:higher_int_claim}.

The addition follows by applying Theorem~\ref{thm:singular_density} to the $\mathcal{A}$-free measure $\mu_j$ and the higher-integrability results of~\cite[Section~3]{S12}.
\end{proof}

We still need to prove the following lemma, which translates a result from~\cite{ARA26} into our language (see again~\cite[Chapter~15]{R26} for the background on generalized Young measures).

\begin{lemma}
\label{lem:concentration_extraction}
Let $\mathcal{A}$ be a first-order constant-coefficient linear differential operator with constant rank. Let $(u_j)_{j\in\N}\subseteq {\rm L}^1(\Omega;V)$ be uniformly bounded and $\mathcal{A}$-free. If $(u_j)_j$ is not equi-integrable on some $U\subseteq\Omega$ then there exists $\nu\in\mathcal{Y}_{\mathcal{A}}$ such that if $q:V\to[0,\infty)$ is continuous and positively $1$-homogeneous and if 
\begin{equation}
\label{eq:marcellinopaneevino}
    \lim_{j\to\infty}\int_{\Omega^\prime}q(u_j)\de x\to 0  \qquad\text{for all $\Omega' \subseteq U$,}
\end{equation}
then 
\begin{equation}
\label{eq:67}
    \int_{\mathbb S_V}q\de\nu=0.
\end{equation}
\end{lemma}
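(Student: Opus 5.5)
The plan is to pass to generalized Young measures and extract a blow-up at a concentration point. Up to a subsequence, $(u_j)_j$ generates a generalized Young measure $(\nu_x,\lambda,\nu_x^\infty)$ on $\Omega$ (in the sense of \cite[Chapter~15]{R26}). Non-equi-integrability on $U$ means that the concentration part is non-trivial: $\lambda\mres U\neq 0$. Note that $\lambda$ may have a non-trivial absolutely continuous part if concentrations are diffuse; the case $\lambda^s \neq 0$ would fall under the singular part.

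We then invoke the localization principle for concentrations from \cite{ARA26}, valid for constant-rank $\mathcal{A}$. It says that at $\lambda$-almost every point $x_0\in U$ (with respect to the concentration measure) the Young measure $\nu^\infty_{x_0}$ is itself generated by an $\mathcal{A}$-free sequence that converges to zero in measure. More precisely, after a blow-up/rescaling argument around $x_0$, followed by projecting with the constant-rank Fourier multiplier onto $\ker\mathcal{A}$ and truncating the oscillatory part, one obtains a sequence $(w_j)_j\subseteq{\rm C}^\infty(Q;V)$ satisfying (i)--(v) of Definition~\ref{def:pure_concentrations} with $\Theta_{w_j}\toweakstar\nu:=\nu^\infty_{x_0}$.

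The step I expect to be the main obstacle is exactly this translation. Two issues have to be handled:
\begin{enumerate}[(a)]
\item the ${\rm L}^1$-normalization must hold;
\item the boundary condition $|w|(\partial Q)=0$ from (iv) must hold.
\end{enumerate}
For both I would first try choosing the blow-up scale and the cube so that the rescaled concentration measure gives no mass to $\partial Q$, which holds for almost every choice. I would then pass to a diagonal sequence as in Lemma~\ref{lem:hom_weak_star_closed}.

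It remains to verify \eqref{eq:67} for the chosen $x_0$. Suppose $q\geq0$ is continuous and positively $1$-homogeneous and \eqref{eq:marcellinopaneevino} holds. Then $q$ has linear growth with $q^\infty=q$. By the fundamental theorem for generalized Young measures, applied on every $\Omega'\subseteq U$ with $\lambda(\partial\Omega')=0$,
\begin{equation*}
0=\lim_{j\to\infty}\int_{\Omega'}q(u_j)\de x=\int_{\Omega'}\langle\nu_x,q\rangle\de x+\int_{\Omega'}\langle\nu_x^\infty,q\rangle\de\lambda(x).
\end{equation*}
Both terms are non-negative, so $\langle\nu^\infty_x,q\rangle=0$ for $\lambda$-a.e.\ $x\in U$.

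One subtlety is that the exceptional set could depend on $q$. To remove this dependence, take a countable dense family of non-negative $1$-homogeneous functions, which is possible by separability of ${\rm C}(\mathbb S_V)$. Choose $x_0$ outside the union of the corresponding null sets. Then, for every $q$ satisfying the hypothesis, approximate it uniformly on $\mathbb S_V$ by members of the family. Alternatively, since $\nu$ is fixed, it suffices to note that for each such $q$ the set of good points has full $\lambda$-measure, and that $\nu$ was chosen at a Lebesgue point of $x\mapsto\nu^\infty_x$ for the countable family.

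This yields $\int_{\mathbb S_V}q\de\nu=0$, where $\nu=\nu^\infty_{x_0}\in\mathcal{Y}_\mathcal{A}$, as claimed.
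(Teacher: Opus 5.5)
Your derivation that $\langle\nu^\infty_x,q\rangle=0$ for $\lambda$-a.e.\ $x\in U$ is fine. The central step, $\nu^\infty_{x_0}\in\mathcal{Y}_\mathcal{A}$, has a genuine gap.

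\textbf{Singular points work.} At $\lambda^s$-a.e.\ point your blow-up does what you want. The rescaled functions are exactly $\mathcal{A}$-free by homogeneity, and the tangent Young measure has oscillation part $\delta_0$, concentration measure some $\tau\in{\rm Tan}(\lambda^s,x_0)$, and angle measure $\nu^\infty_{x_0}$. So the blow-ups tend to $0$ in measure and no projection is needed.

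\textbf{Regular points fail.} Non-equi-integrability may be carried entirely by a diffuse concentration measure $\lambda\ll\mathscr L^d$; for instance, take Example~1 of the Appendix and add an equi-integrable $\mathcal{A}$-free oscillation. At such regular points the tangent Young measure keeps the oscillation part $\nu_{x_0}\neq\delta_0$. Since everything is $1$-homogeneous, no choice of scale or amplitude changes the ratio of oscillation mass to concentration mass, so the blow-ups do not tend to zero in measure.

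Your proposed remedy (project onto $\ker\mathcal{A}$ with the constant-rank multiplier $\mathbb P$, then truncate) is the ${\rm L}^p$, $p>1$, decomposition-lemma technique, and it fails in ${\rm L}^1$. Write $T_M$ for truncation at level $M$. Then the defect is
$$\mathbb P(T_Mu_j)-T_Mu_j=(I-\mathbb P)(u_j-T_Mu_j).$$
This is controlled only in weak-${\rm L}^1$ by $\|u_j-T_Mu_j\|_{{\rm L}^1}$, which contains the concentration mass and does not vanish. This is the real obstacle, not the normalization and boundary issues (a) and (b), which are routine.

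\textbf{How the paper avoids it.} The paper uses a duality argument instead. It takes the averaged measure $\nu=\lambda(\Omega')^{-1}\int_{\Omega'}\nu^\infty_x\de\lambda(x)$ and verifies the homogeneous generalized Jensen inequality for it, using the arguments of \cite[Lemma~4.5]{ARA26}. It then invokes the characterization \cite[Proposition~4.4]{ARA26} to produce $w_j\to 0$ in measure with $\Theta_{w_j}\toweakstar\nu$. To repair your pointwise route you would need an analogous Jensen-plus-characterization step for $\nu^\infty_{x_0}$, or an ${\rm L}^1$ $\mathcal{A}$-free truncation result.

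\textbf{A smaller point.} When removing the $q$-dependence of the null sets, your first variant does not work. The members of the countable dense family need not satisfy \eqref{eq:marcellinopaneevino}, so they carry no null set, and approximating $q$ by them gives nothing. The Lebesgue-point variant is correct once you note one thing. If $x_0$ is a $\lambda$-Lebesgue point of $x\mapsto\langle\nu^\infty_x,\psi_n\rangle$ for a dense $\{\psi_n\}\subseteq{\rm C}(\mathbb S_V)$, this extends by uniform approximation to every $\psi\in{\rm C}(\mathbb S_V)$. Then $\langle\nu^\infty_{x_0},q\rangle$ is a limit of $\lambda$-averages of a $\lambda$-a.e.\ vanishing function, hence zero. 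The paper's averaging makes this automatic: $\int q\de\nu\leq\lambda(\Omega')^{-1}\lim_j\int_{\Omega'}q(u_j)\de x=0$ holds for every admissible $q$ simultaneously.
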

\begin{proof}
If $(u_j)_j$ as in the assumptions is not equi-integrable on some $U\subseteq\Omega$ then there exists non-zero concentration measure $\lambda$ and concentration-direction probability measures $(\nu_{x}^\infty)_{x\in\Omega}$ generated by $(u_j)_j$. Now choose $\Omega^\prime\subseteq U$ such that $\lambda(\partial\Omega^\prime)=0$ and $\lambda(\Omega^\prime)>0$. Set 
\begin{equation*}
    \nu:=\frac{1}{\lambda(\Omega^\prime)}\int_{\Omega^\prime}\nu_{x}^\infty\de\lambda(x).
\end{equation*}
The arguments in the proof of~\cite[Lemma 4.5]{ARA26} show that $\nu$ satisfies the homogeneous generalized Jensen inequality. Therefore, by~\cite[Proposition 4.4]{ARA26} we can find a sequence $(w_j)_j\subseteq {\rm L}^1(U;V)$ (a priori not smooth) such that $w_j \to 0$ in measure, $|w_j|\toweakstar\mathscr L^d\mres\Omega$, $w_j\toweakstar[\nu]\mathscr L^d\mres\Omega$, and $\Theta_{w_j}\toweakstar\nu$. After normalization and a mollification, we may without loss of generality assume that $(w_j)_j$ satisfies the properties listed in Definition~\ref{def:pure_concentrations} and hence $\nu\in\mathcal{Y}_{\mathcal{A}}$.

Now fix a $q:V\to[0,\infty)$ that is continuous and positively $1$-homogeneous and for which~\eqref{eq:marcellinopaneevino} holds. We then have
\begin{equation*}
     \int_{\mathbb S_V}q\de\nu=\frac{1}{\lambda(\Omega^\prime)}\int_{\Omega^\prime}\int_{\mathbb S_V}q\de\nu_x^\infty\de\lambda(x)=\lim_{j\to\infty}\int_{\Omega^\prime}q(w_j)\de x=0.
\end{equation*}
This concludes the proof.
\end{proof}

\begin{proof}[Proof of Theorem~\ref{thm:compensated_compactness}]
Let $A\in V$ and let $u_j$ be a sequence as in \cref{def:pure_concentrations}. Choose a sequence $M_j\uparrow\infty$ such that $\mathscr L^d(\{|u_j|\geq M_j\})\to 0$ as $j\to\infty$ and such that
\begin{equation*}
    \lim_{j\to\infty}\int_{\{|u_j|> M_j\}}|u_j|\de x=1.
\end{equation*}
We have 
\begin{equation*}
    \liminf_{j\to\infty}\int_Qf(A+u_j)\de x\geq\liminf_{j\to\infty}\int_{\{|u_j|\leq M_j\}}f(A+u_j)\de x+\liminf_{j\to\infty}\int_{\{|u_j|>M_j\}}f(A+u_j)\de x.
\end{equation*}
Thanks to the previous assumption, the first lower limit on the right-hand side is equal to $f(A)$. On the other hand, there exists $\mu\in\mathcal{Y}_\mathcal{A}$ such that 
\begin{equation*}
  \liminf_{j\to\infty}\int_Qf(A+u_j)\de x\geq f(A)+\int_{\mathbb S_V}f^\infty\de\mu\geq f(A)+\int\int_{\mathbb S_V} f^\infty\de\nu\de\pi\geq f(A),  
\end{equation*}
where we have used \cref{thm:vanishing_mass}, together with Jensen's inequality, the $\Lambda_{\mathcal{A}}$-convexity of $f$, and the non-negativity of the recession function at points in $\Lambda_{\mathcal{A}}$ (which follows from the corresponding positivity property of $f$).
\end{proof}

\begin{proof}[Proof of Theorem~\ref{thm:extremals}]
The refined decomposition follows by replacing the application of Proposition~\ref{prop:Choquet_repr} in the proof of Theorem~\ref{thm:vanishing_mass} with an invocation of the full Choquet theorem (see, e.g.,~\cite{P01}).

Consider $\nu\in{\rm ex}\, \mathcal{W}_{\Lambda_{\mathcal{A}}}$. Then there exists $\xi\neq 0$ such that $[\nu]\in{\rm ker}\mathbb A(\xi)=:L$. Since $L\subseteq\Lambda_{\mathcal{A}}$ we have that 
$\mathcal{W}_{L}:=\{\nu\in\mathcal{M}^1(\mathbb S_V) \;:\; [\nu]\in L\}$ is contained in $\mathcal{W}_{\Lambda_{\mathcal{A}}}$. Therefore, $\nu$ is also extremal in $\mathcal{W}_L$. If, by contradiction, $\#{\rm supp}\, \nu>\ell+1$ where $\ell:={\rm rank}\, \mathbb A(\xi)$, then there exist distinct $z_1,\dots,z_{\ell+2}\in{\rm supp}\, \nu$. Set $r := (\min_{i\neq j}|z_i-z_j|)/3$ and define the disjoint family of sets $E_i:=B_r(z_i)\cap\mathbb S_V$ for $i=1,\dots,\ell+2$, all of which satisfy $\nu(E_i) > 0$, as well as the vectors
\begin{equation*}
    v_i=\bigg(\nu(E_i),\int_{E_i}P_{L^\perp}[z]\de\nu(z)\bigg)\in\R\times L^\perp,
\end{equation*}
where $P_{L^\perp}:\mathbb S_V\to L^\perp$ denotes the orthogonal projection onto $L^\perp$. Since $\ell={\rm rank}\, A(\xi)={\rm dim}\, L^\perp$, and hence ${\rm dim}\, (\R\times L^\perp)=\ell+1$, the vectors $v_1,\dots,v_{\ell+2}\in\R\times L^\perp$ must be linearly dependent. Thus we find scalars $a_i\in\R$, not all of which are zero, such that 
\begin{equation*}
    \sum_{i =1}^{\ell+2}a_i\nu(E_i)=0,\qquad
    \sum_{i =1}^{\ell+2}a_i\int_{E_i}P_{L^\perp}[z]\de\nu(z)=0.
\end{equation*}
Set now $M:=\max_{i=1,\dots,\ell+2}|a_i|$ and define $g:\mathbb S_V\to\R$ via 
\begin{equation*}
g(z):=\frac{1}{2M}\sum_{i=1}^{\ell+2}a_i\ONE_{E_i},  \qquad z \in \mathbb S_V.
\end{equation*}
Then, $|g|\leq 1/2$ and 
\begin{equation*}
    \int g\de\nu=0, \qquad
    \int_{\mathbb S_V}g(z)P_{L^\perp}[z]\de\nu(z)=0.
\end{equation*}
The latter properties imply that the measures
\[
  \nu^{\pm}:=(1\pm g)\nu
\]
are both well-defined probability measures on $\mathbb{S}_V$ and they are such that $\nu^+\neq\nu^-$ (since $g\neq 0$ on a set of positive measure). Moreover, we have
\begin{equation*}
    P_{L^\perp}[\nu^{\pm}]
    =P_{L^\perp}[\nu]\pm\int_{\mathbb S_V}g(z)P_{L^\perp}[z]\de\nu(z)=0
\end{equation*}
and thus $[\nu^\pm] \in L \subseteq \Lambda_{\mathcal{A}}$. Finally, $\nu=(\nu^++\nu^-)/2$, implying that $\nu$ is not extremal. This is a contradiction to our assumption above.
\end{proof}

\appendix
\section{Appendix: Examples of concentrating sequences}\label{sec:examples}

In this appendix we present some examples of measures on $\R^{2 \times 2}$ that lie in $\mathcal{Y}_{\rm div}$, and their generating sequences $u_j:(0,1)^2\subseteq\R^2\to\R^{2\times2}$, $j \in \N$, which are row-wise divergence-free. Technically speaking, the sequences built here are not admissible in Definition~\ref{def:pure_concentrations} as they will often be merely piecewise constant and not smooth. However, via a mollification, which we do not write out, it is possible to make them admissible while preserving their significant properties.

\medskip
\textit{Example 1.}\;
The first example concerns a sequence $(u_j)_j$ converging weakly* and strictly to the measure 
\begin{equation*}
    u=\begin{pmatrix}
    1 & 0 \\
    0 & 0
    \end{pmatrix}\mathscr L^2 \mres Q=: {\rm e}_1\otimes {\rm e}_1\, \mathscr L^2 \mres Q,
\end{equation*}
and whose generated concentration Young measure $\mu$ equals $\delta_{{\rm e}_1\otimes {\rm e}_1}$. To construct it, consider $S_j\subseteq(0,1)$ to be the union of disjoint open intervals with total length $\rho_j:=\mathscr L^1(S_j)\to 0$ as $j \to \infty$, and such that the intervals are equi-distributed in the sense that for every $\phi\in {\rm C}([0,1])$ it holds that 
\begin{equation*}
    \frac{1}{\rho_j}\int_{S_j}\phi\de t\to\int_0^1\phi\de t.
\end{equation*}
For instance, we could take on the order of $\sqrt{j}$ intervals of length about $1/j$ (so that $\rho_j \sim 1/\sqrt{j}$). Then, defining the set $A_j:=(0,1)\times S_j\subseteq\R^2$, let
\begin{equation}
    \label{ex:first}
    u_j:= \frac{1}{\rho_j}\begin{pmatrix}
        1  & 0 \\
        0 & 0
    \end{pmatrix}\ONE_{A_j}.
\end{equation}
It is easy to verify that these vector fields satisfy all the assumptions in Definition~\ref{def:pure_concentrations}, apart from smoothness, so that $\mu = \delta_{{\rm e}_1\otimes {\rm e}_1}$ belongs to $\mathcal{Y}_\mathcal{\rm div}$; in fact, $\mu$ even belongs to $\mathcal{Y}_{\rm div}^{\rm unif}$. Moreover, ${\rm e}_1\otimes {\rm e}_1 \in\Lambda_{\rm div}$ as ${\rm det}\, {\rm e}_1\otimes {\rm e}_1=0$.

\medskip
\textit{Example 2.}\;
Defining the matrices $E_{11}:= {\rm e}_1\otimes {\rm e}_1$ and $E_{22}:={\rm e}_2\otimes {\rm e}_2$, the second example we consider will generate the concentration Young measure 
\begin{equation*}
\mu:=\frac{1}{2}\delta_{E_{11}}+\frac{1}{2}\delta_{E_{22}}.  
\end{equation*}
For this, consider the sets $S_j$ defined in the previous example and set
\[
  A_j:=(0,1)\times S_j, \qquad
  B_j:=S_j\times(0,1).
\]
Then let
\begin{equation*}
 v_j:=\frac{1}{\rho_j}\begin{pmatrix}
        1 & 0 \\
        0 & 0
    \end{pmatrix}\ONE_{A_j}
    +\frac{1}{\rho_j}\begin{pmatrix}
        0 & 0 \\
        0 & 1
    \end{pmatrix}\ONE_{B_j},\qquad
    u_j:=\frac{v_j}{\|v_j\|_{{\rm L}^1}}.
\end{equation*}
It is clear that the sequence $(u_j)_j$ has all the desired properties, apart from the smoothness. On the other hand, let us stress that $(u_j)_j$ does not converge strictly to the limit (otherwise the generated measure $\mu$ would be a single Dirac mass). Observe that $E_{11}, E_{22}\in\Lambda_{\rm div}$, so a decomposition as in Theorems~\ref{thm:vanishing_mass},~\ref{thm:vanishing_mass2} is possible for this $\mu$.

\medskip
Before we move on, let us mention that in the appendix to~\cite{BIR23} a much more sophisticated example in the same spirit as the two above is presented in order to illustrate the upper bound in the Optimal Light Structures Conjecture.

\medskip
\textit{Example 3.}\;
The last example, which can be found already in~\cite{B03} (where it is attributed to Seppecher) will show that in the statement of Conjecture~\ref{conj:vmc} one cannot ask for the measures $\nu$ of the decomposition to satisfy the stronger condition ${\rm supp}\, \nu\subseteq\Lambda_{\rm div}$. For this, for every $j$ sufficiently large denote by $A_j$ an ensemble of disjoint copies of the open ball $B_{r_j}$, with radius $r_j$ chosen such that $\pi r_j^2=1/(4j)$, which are equidistantly arranged in the square $Q$ in a way that
\[
  4\sqrt{j}\int_{A_j}\phi\de x \to \int_Q \phi\de x
\]
for every $\phi\in {\rm C}([0,1]^2)$. Loosely speaking, one has that $A_j$ consists of $\sqrt{j}$ balls of area $1/(4j)$ each. This also implies that $\mathscr L^2(A_j)\to 0$ as $j\to\infty$. One can then find for every $j \in \N$ a smooth and compactly supported function $v_j\in {\rm C}_c^\infty(Q;\R^{2\times 2})$ such that ${\rm div}\,v_j=0$ and $v_j = 4\sqrt{j}\, {\rm Id}$ on $A_j$, where ${\rm Id}$ denotes the identity matrix. The normalized sequence $u_j:=\|v_j\|_{{\rm L}^1}^{-1}\, v_j$ then satisfies the assumptions of Definition~\ref{def:pure_concentrations} and it is not hard to see that the generated concentration Young measure $\mu$ contains a Dirac mass centered at the identity matrix. Thus, ${\rm supp}\, \mu$ is not a subset of $\Lambda_{\rm div}$. Interestingly, in this example, $u_j \toweakstar 0$ (test with $\phi(x) := x_k$ for $k = 1,2$ and apply the divergence theorem). Consequently, $[\mu] =0 \in \Lambda_{\rm div}$, so the decomposition of Theorems~\ref{thm:vanishing_mass},~\ref{thm:vanishing_mass2} is trivially satisfied.

\medskip

As a final remark, consider the measure $\mu\in\mathcal{M}^1(\mathbb S^3)$ (where $\mathbb S_{\R^{2 \times 2}}$ is the Frobenius-norm sphere in $\R^{2 \times 2}$) given by
\begin{equation*}
    \mu=\frac{1}{2}\delta_{{\rm Id}/\sqrt{2}}+\frac{1}{2}\delta_{-{\rm Id}/\sqrt{2}}.
\end{equation*}
Using again the characterization of generalized Young measures in~\cite{KR10,AR21,KR22} in conjunction with~\cite{KK16} it is possible to prove that $\mu$ belongs to $\mathcal{Y}_{\rm curl}^{\rm unif}$. However, it would be instructive to build an \emph{explicit} curl-free sequence $(u_j)_j$ that generates $\mu$ in the sense of Definition~\ref{def:pure_concentrations}.

\bibliography{references}
            \bibliographystyle{alpha}

\end{document}